\DeclareMathOperator{\set}{set}
 \theoremstyle{definition}
 \newtheorem{theorem}{Theorem}[section]
 \newtheorem{example}[theorem]{Example}
 \newtheorem{corollary}[theorem]{Corollary}
 \newtheorem{definition}[theorem]{Definition}
 \newtheorem{lemma}[theorem]{Lemma}
 \newtheorem*{remark*}{Remark}
\newtheorem*{note*}{Note}
\newtheorem*{notation}{Notation}
\newcommand{\nsym}{\textbf{NSym}}
\newcommand{\qsym}{\textnormal{QSym}}
\newcommand{\sym}{\Lambda}
\newcommand{\xx}{\boldsymbol{x}}
\newcommand{\XX}{\boldsymbol{X}}
\newcommand{\nr}{\boldsymbol{r}}
\newcommand{\nel}{\boldsymbol{e}}
\newcommand{\nh}{\boldsymbol{h}}
\newcommand{\np}{\boldsymbol{p}}
\newcommand{\nphi}{\boldsymbol{\phi}}
\newcommand{\npsi}{\boldsymbol{\psi}}
\newcommand{\modelfix}{\models_w}
\newcommand{\modelsstrong}{\models}
\DeclareMathOperator{\For}{\textnormal{For}}
\DeclareMathOperator{\des}{des}
\DeclareMathOperator{\pb}{pb}
\DeclareMathOperator{\fb}{fb}
\title{A Combinatorial Perspective on the Noncommutative Symmetric Functions}
\author{Angela Hicks and Robert McCloskey}
\begin{document}

\begin{abstract} The noncommutative symmetric functions $\textbf{NSym}$ were first defined abstractly by Gelfand et al.\ in 1995 as the free associative algebra generated by noncommuting indeterminates $\{\boldsymbol{e}_n\}_{n\in \mathbb{N}}$ that were taken as a noncommutative analogue of the elementary symmetric functions.  The resulting space was thus a variation on the traditional symmetric functions $\sym$.  Giving noncommutative analogues of generating function relations for other bases of $\sym$ allowed Gelfand et al.\ to define additional bases of $\textbf{NSym}$ and then determine change-of-basis formulas using quasideterminants.  In this paper, we aim for a self-contained exposition that expresses these bases concretely as functions in infinitely many noncommuting variables and avoids quasideterminants.  Additionally, we look at the noncommutative analogues of two different interpretations of change of basis in $\sym$: both as a product of a minimal number of matrices, mimicking Macdonald's exposition of $\sym$ in \textit{Symmetric Functions and Hall Polynomials}, and as statistics on brick tabloids, as in work by E\u{g}ecio\u{g}lu and Remmel, 1990.
\end{abstract}
\maketitle
\section{Introduction}
\noindent We define three well-known vector spaces: the symmetric functions, $\sym$, the quasisymmetric functions, $\qsym$, and lastly, the noncommutative symmetric functions, $\nsym$, the focus of this work.  The noncommutative symmetric functions were first defined in \cite{non} by Gelfand et al., with a definition inspired by the well-known Fundamental Theorem of Symmetric Polynomials.  Let $\mathbb{N}$ be the set of all nonnegative integers.
\begin{theorem}  Every symmetric function can be written uniquely as a polynomial in the elementary symmetric functions $\{e_n\}_{n\in \mathbb{N}}$.
\end{theorem}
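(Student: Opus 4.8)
The plan is to prove both halves of the statement — existence (every symmetric function is \emph{some} polynomial in the $e_n$) and uniqueness (that polynomial is determined), the latter being equivalent to the algebraic independence of $\{e_n\}_{n\geq 1}$. The single tool driving both halves is a total order on monomials: fix the lexicographic order on the monomials $x_1^{a_1}x_2^{a_2}\cdots$ in the underlying variables, and for any nonzero symmetric function $f$ track its \emph{leading monomial}, the lex-largest monomial occurring in $f$. Because $f$ is symmetric, its leading monomial $x_1^{a_1}\cdots x_n^{a_n}$ must have weakly decreasing exponents $a_1\geq a_2\geq\cdots\geq a_n\geq 0$, i.e.\ it is indexed by a partition $\lambda=(a_1,\ldots,a_n)$.

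For existence I would run a greedy subtraction algorithm. The key computation is that the product $e_1^{c_1}e_2^{c_2}\cdots e_n^{c_n}$ has leading monomial $x_1^{c_1+\cdots+c_n}x_2^{c_2+\cdots+c_n}\cdots x_n^{c_n}$, obtained by multiplying the leading monomials $x_1\cdots x_k$ of each $e_k$. Hence, given $f$ with leading partition $\lambda$, choosing $c_i:=\lambda_i-\lambda_{i+1}\geq 0$ produces a product of elementary symmetric functions with exactly the same leading monomial and leading coefficient; subtracting a suitable scalar multiple from $f$ strictly lowers the leading monomial. Iterating, and observing that there are only finitely many monomials of a given degree lex-below any fixed one, the process terminates and exhibits $f$ as a polynomial in the $e_n$.

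For uniqueness I would show the leading-monomial assignment just used is injective on products: the map $(c_1,\ldots,c_n)\mapsto(a_1,\ldots,a_n)$ with $a_i=c_i+c_{i+1}+\cdots+c_n$ is invertible via $c_i=a_i-a_{i+1}$, so distinct monomials in the $e_n$ have distinct leading monomials. Consequently, in any nontrivial polynomial relation among the $e_n$ the term whose leading monomial is lex-largest cannot be cancelled by any other term, so the relation cannot vanish; this proves algebraic independence and hence uniqueness of the representation.

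I anticipate the main obstacle to be bookkeeping rather than conceptual: one must verify carefully that leading monomials multiply as claimed (that no larger monomial is created by cross terms in the product) and that the lexicographic order restricts to a well-order on the monomials of each fixed total degree, so that the subtraction algorithm provably halts. In the infinite-variable setting I would phrase everything in terms of the monomial symmetric function basis $\{m_\mu\}$, so that the whole argument reduces to the statement that the transition matrix from $\{e_\lambda\}$ to $\{m_\mu\}$ is triangular with unit diagonal relative to a total refinement of dominance order, from which both spanning and independence follow simultaneously.
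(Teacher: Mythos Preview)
Your argument is the standard, correct classical proof of the Fundamental Theorem of Symmetric Polynomials via leading monomials in lexicographic order. There is nothing to compare it to, however: the paper does not supply a proof of this theorem at all. It is stated in the introduction as a well-known result (the ``Fundamental Theorem of Symmetric Polynomials'') solely to motivate Gelfand et al.'s definition of $\textbf{NSym}$, and the paper moves on immediately without proof.
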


\noindent Inherent to the statement is the fact that the elementary symmetric functions commute with each other.  The noncommutative symmetric functions were originally defined by Gelfand et al.\ in \cite{non}, and answer the following question.
\begin{quote}
    ``What happens if the elementary symmetric functions are replaced by a noncommutative multiplicative basis?''
\end{quote}
\noindent Formally,  Gelfand et al.\ define:
\begin{definition}\label{def:Nsymbyformal}[\cite{non}, section 3] The ring of \textbf{noncommutative symmetric functions} is the free associative algebra 
$$\textbf{NSym} = \langle \boldsymbol{e}_n\rangle_{n \in \mathbb{Z}^+},$$
where $\{\boldsymbol{e}_n\}_{n \in \mathbb{Z}^+}$ is a set of algebraically independent indeterminates which do not commute.  By convention, we occasionally also use $\boldsymbol{e}_0$ for 1, to parallel the symmetric function literature, although one should be careful, since $\boldsymbol{e}_0$ commutes with the remainder.  The symbol $\boldsymbol{e}_n$ is the $n^{\text{th}}$ \textbf{noncommutative elementary symmetric function}.  
\end{definition}

\noindent Gelfand et al.\ proceed to define additional bases by generalizing the relations between generating series of various well-known bases of $\sym$.  They then determine explicit change-of-basis formulas between the resulting bases, using their previous work on quasideterminants as a key tool.  Finally, they mention briefly a concrete realization of several of their newly defined bases
in terms of a set of noncommuting variables on pages 73-74 of their work, but do not give an explicit characterization of the resulting space.  Thus their presentation, while inherently logical to its inspiration, is the reverse of the traditional presentation of the symmetric functions in well-known texts in this area (e.g., Stanley, Chapter 7 in \cite{ECII}, or Macdonald \cite{Macdonald}).  Such texts start with concrete realizations of these spaces as subspaces of the space of power series in infinitely many commuting variables, define explicit bases in those variables, and then derive relations from those definitions.  This convention continues in texts which cover the noncommutative symmetric functions.  For example, Luoto et al.\ in \cite{intro} give a friendly, well-written introduction to $\sym$, $\qsym$, and $\nsym$.  They define the first two as subspaces of the space of formal power series in infinitely many variables, but define $\nsym$ using Definition \ref{def:Nsymbyformal} above.

A few other sources also mention the same concrete realization of bases of $\nsym$  as formal power series in noncommuting variables mentioned in Gelfand et al. Huang \cite{Huang} is perhaps the closest to this work; on page 16 $\nsym$ is defined as the span of the noncommutative complete homogeneous functions, given in terms of noncommuting variables.  The noncommutative ribbon and elementary symmetric functions are also given in terms of the same set of variables, but the work does not prove this definition gives an isomorphic space to that of Gelfand et al.  The proof of the equivalence is given indirectly in \cite{grinberg2014hopf} in Corollary 8.1.14 and is implicit in the proof of Proposition 6.2 in Meliot's text \cite{meliot2017representation}.  To the best of our knowledge, Definition \ref{def:Nsymbyalph} and Corollary \ref{cor:free} below, which give an explicit characterization of when a polynomial in $n$ noncommuting variables is in $\nsym$ (and show that this characterization gives an isomorphic space to that of Gelfand et al.) do not clearly appear elsewhere in the literature, despite their simplicity.

Our goal in this work is an elementary (pun intended) exposition of the noncommutative symmetric functions, following traditional presentations of the symmetric functions by emphasizing their realization in terms of a noncommuting set of variables, then deriving the defining relations of Gelfand et al., emphasizing their similarity to the well-studied symmetric functions, all while avoiding quasideterminants.  Along the way, we will show that many of the change-of-basis matrices between well-known bases of $\nsym$ that were derived in Gelfand et al.\ can also be interpreted to generalize well-studied change-of-basis results in $\sym$.  Particularly, we will give a natural generalization of brick tabloids, introduced by E\u{g}ecio\u{g}lu and Remmel in \cite{brick} to unify combinatorial interpretations for a number of basis transitions in $\sym$, and consider a generalization of the commuting diagram Macdonald gives in \cite{Macdonald} to express change of basis as a product of a few key transition matrices.

\subsection{Notation}
Before we begin, we offer a few brief remarks on notation.  In order to distinguish between commuting and noncommuting variables, let $X= (x_1,x_2,\dots)$ give an infinite sequence of commuting variables and $\XX = (\xx_1,\xx_2,\dots)$ give an infinite sequence of noncommuting variables.   This paper follows the convention of \cite{intro}; variable names for bases are deliberately reused across $\sym$, $\nsym$, and $\qsym$.  Since this can occasionally cause confusion,  we will use ``standard'' (lowercase) type for bases of $\sym$, bold type for bases of $\nsym$, and capitalization for bases of $\qsym$ to make distinguishing them as easy as possible.  Repeatedly, we use $\mathbbm{1}_\mathcal{A}$, the indicator function that is $1$ if statement $\mathcal{A}$ is true and is $0$ if $\mathcal{A}$ is false.

Finally, throughout we give citations to theorems, indicating where they are stated in \cite{non}.  With few noted exceptions, the proofs in \cite{non} are distinct, and in many places definitions and theorems are reversed from the presentation below, due to the differences in what we take to be the definition of the space.

\section{Partitions and Compositions}
\noindent The vector spaces discussed here have bases that are naturally indexed by either partitions or compositions, so we begin there. 
\begin{definition} An infinite sequence of nonnegative integers $\alpha = (\alpha_1,\alpha_2,\alpha_3, \dots)$ is a \textbf{weak integer composition}, or simply a \textbf{weak composition}, of $n$ if $\sum \alpha_i = n$, denoted $\alpha \modelfix n$.  The $\alpha_i$ are the \textbf{parts} of $\alpha$, and the \textbf{size} of $\alpha$ is $n$, written $|\alpha| = n$.    \end{definition}

\begin{definition}
A \textbf{strong integer composition} of positive integer $n$, or often just simply a \textbf{composition} of $n$, is a finite sequence of positive integers $\alpha = (\alpha_1,\alpha_2,\dots,\alpha_{\ell(\alpha)})$ summing to $n$, denoted $\alpha \modelsstrong n$.  In this case, $\ell(\alpha)$ is the \textbf{length} of $\alpha$, and $\alpha$ has \textbf{last part} $\alpha_{\ell(\alpha)}$. The length of the empty composition is taken to be 0. \end{definition} 
\noindent We also write $\text{strong}(\alpha)$ to denote the composition attained by removing all zeros from the weak composition $\alpha$.  There is a classical bijection between strong compositions of $n$ and subsets of $[n-1] = \{1,2,\dots,n-1\}$.  In particular, if $\alpha=(\alpha_1,\alpha_2,\dots,\alpha_{\ell(\alpha)})$, we say that $$\set(\alpha)=\{\alpha_1,\alpha_1+\alpha_2,\dots,\alpha_1+\alpha_2+\cdots +\alpha_{\ell(\alpha)-1}\}.$$
There are three well-known involutions on the set of strong compositions (and their associated sets).  Consider $\alpha = (\alpha_1,\alpha_2,\dots,\alpha_{\ell(\alpha)}) \modelsstrong n$ and its associated set, $\text{set}(\alpha) \subseteq [n-1]$.
\begin{definition} The \textbf{reverse} of $\alpha$ is $\alpha^r = (\alpha_{\ell(\alpha)},\alpha_{\ell(\alpha)-1},\dots,\alpha_1).$  
\end{definition}
\begin{definition} If $A \subseteq [n-1]$, let $A^c = [n-1] \setminus A$, the complement of the set $A$ in $[n-1]$.  Then the \textbf{complement} of $\alpha$ is the composition 
$\alpha^c = \text{set}^{-1}(\text{set}(\alpha)^c).$
\end{definition}
\begin{definition}
The \textbf{transpose} of $\alpha$, written $\alpha^t$, is the composition obtained by applying the two previous involutions: $\alpha^t = (\alpha^r)^c = (\alpha^c)^r.$  
\end{definition}
\noindent It is not hard to check that composing any two of these distinct maps yields the third.  We will also repeatedly use the fact that for any (nonempty) strong composition $\alpha$, it is true that $\ell(\alpha) + \ell(\alpha^c) - 1 = |\alpha|$.

\begin{example} If $\alpha = (2,3,2,1) \modelsstrong 8$, then 
\begin{itemize}
\item $\alpha^r = (1,2,3,2);$
\item $\alpha^c = \text{set}^{-1}(\text{set}(\alpha)^c) = \text{set}^{-1}(\{2,5,7\}^c) = \text{set}^{-1}(\{1,3,4,6\}) = (1,2,1,2,2);$
\item $\alpha^t = (\alpha^c)^r = (1,2,1,2,2)^r = (2,2,1,2,1).$
\end{itemize}
Also see that $\ell(\alpha) + \ell(\alpha^c) - 1 = 4 + 5 - 1 = 8 = |\alpha|$.
\end{example}

\begin{definition} \label{def:refinement}
If $\alpha = (\alpha_1,\dots,\alpha_{\ell(\alpha)})$ and  $\beta = (\beta_1,\dots,\beta_{\ell(\beta)})$ are strong compositions of $n$, then \textbf{$\beta$ is a refinement of $\alpha$}, or \textbf{$\beta$ refines $\alpha$}, denoted $\beta \preceq \alpha$, if there exists an integer sequence $0=j_0<j_1<j_2<\cdots <j_{\ell(\alpha)}=\ell(\beta)$ such that for each $i \in \{1,2,\dots,\ell(\alpha)\}$,
$$\alpha_i = \beta_{j_{i-1}+1}+\beta_{j_{i-1}+2}+\cdots +\beta_{j_i}.$$ 
 That is, each part of $\alpha$ can be obtained by summing consecutive parts of $\beta$.  Equivalently, $\beta \preceq \alpha$ if and only if $\set(\beta)\supseteq \set(\alpha).$  In this context, let $\beta^{(i)}$ denote the subcomposition of $\beta$ which sums to $\alpha_i$ for $i =1,\dots,\ell(\alpha)$ so that $|\beta^{(i)}| = \alpha_i$.
\end{definition}
\noindent We note that the direction of refinement ($\preceq$) is not consistent across the literature in this area, and in particular, this work uses the opposite convention of \cite{non} and follows that of \cite{intro}.  
\begin{example} Both $\alpha = (1,6,3,4) \modelsstrong 13$ and $\beta = (1,3,1,2,2,1,1,3) \modelsstrong 13$.  See that $\beta \preceq \alpha$ with $\beta^{(1)} = (1),$ $\beta^{(2)} = (3,1,2)$, $\beta^{(3)} = (2,1)$, and $\beta^{(4)} = (1,3)$.
\end{example}

\noindent Since strong compositions of $n$ are in bijection with subsets of $[n-1]$, they inherit the same M\"{o}bius function, $\mu$.  We will repeatedly use the following basic facts:
\begin{itemize}
    \item For $S,T\subseteq [n-1]$, $\mu(S,T)=(-1)^{|S|-|T|}$.
    \item For $S,T\subseteq [n-1]$, $\sum_{S\subseteq U\subseteq T}\mu(U,T)=\mathbbm{1}_{S=T}$.
    \item (M\"{o}bius Inversion.) If $K$ is a commutative ring and $f,g:[n-1]\rightarrow K$, then 
    $$g(T)=\sum_{S\subseteq T}f(S)\,\,\,\text{ for all } T\subseteq [n-1]$$ if and only if $$f(T)=\sum_{S\subseteq T} g(S)\mu(S,T)\,\,\,\text{ for all } T\subseteq [n-1].$$
\end{itemize}

\begin{definition}
The \textbf{sort} of $\alpha$, $\text{sort}(\alpha)$, is the composition obtained by rewriting $\alpha$ in weakly decreasing order.
\end{definition}
\begin{definition}
An \textbf{integer partition} of $n$, or simply a \textbf{partition} of $n$, is a composition $\lambda$ of $n$ for which $\text{sort}(\lambda) = \lambda$, denoted $\lambda \vdash n$. 
 \end{definition}
\noindent Where the order of the parts of a composition $\alpha$ is immaterial, such as when $\alpha$ is a partition, we may write $\alpha = (1^{m_1(\alpha)}2^{m_2(\alpha)} \cdots n^{m_n(\alpha)})$, where 
$m_i(\alpha)$ gives the number of parts of size $i$ occurring in $\alpha$.  We may also use partial exponentiation to write certain compositions, e.g., $(1^k, n-k)$ will represent $(1,1,\dots,1,n-k) \models n$.

\section{Three Rings}

\noindent We consider three (graded) rings in this paper: the ring of symmetric functions, $\sym$, the ring of quasisymmetric functions, $\textnormal{QSym}$, and the ring of noncommutative symmetric functions, $\textbf{NSym}$.  Our goal in this section is to briefly introduce all three spaces, with an emphasis on $\textbf{NSym}$.  The unfamiliar reader may wish to consult \cite{Macdonald} or Chapter 7 of \cite{ECII} to learn more about the fundamentals of symmetric function theory, and \cite{intro} for an introduction to quasisymmetric functions.  There is not, to our knowledge, a well-known text covering noncommutative symmetric functions in detail.  A series of papers exploring noncommutative symmetric functions and their significance include \cite{non}, \cite{krob1997noncommutative}, \cite{duchamp1997noncommutative}, \cite{NSYMIV}, and \cite{krob1999noncommutative}.  These comprise perhaps the best known introduction to this area.

\begin{notation}
Let $X$ denote an infinite sequence of commuting variables $(x_1,x_2,\dots)$.  Given a composition $\alpha = (\alpha_1,\alpha_2,\dots)$, we write $x^\alpha$ to denote the monomial $x_1^{\alpha_1}x_2^{\alpha_2}\cdots$.  If $f\in \mathbb{C}[[x_1,x_2,\dots]]$ is a power series of bounded total degree, we write $f|_{x^\alpha}$ to denote the coefficient of $x^\alpha$ in the expansion of $f$ into monomials.  Any such $f$ is \textbf{homogeneous of degree $n$} if each monomial $x^\alpha$ appearing in $f$ is such that $|\alpha| = n$. 
\end{notation}

\begin{definition}
Let $f \in \mathbb{C}[[x_1,x_2,\dots]]$ be a power series of bounded total degree.  Then $f$ is \textbf{symmetric} if for all integers $k>0$,  all compositions $\alpha=(\alpha_1,\dots,\alpha_k)$, and all lists $(i_1,i_2,\dots, i_k)$ of distinct positive integers, $$f|_{x_1^{\alpha_1}x_2^{\alpha_2}\cdots x_k^{\alpha_k}}=f|_{x_{i_1}^{\alpha_1}x_{i_2}^{\alpha_2}\cdots x_{i_k}^{\alpha_k}}.$$
Let $\sym^n$ be the set of all symmetric functions homogeneous of degree $n$.  Then the ring of symmetric functions is $$\sym=\bigoplus_{n\geq 0} \sym^n.$$\end{definition}
\begin{example}\label{symfunc}  The power series
$$x_1^2x_2 + x_1^2x_3 + x_1^2x_4 + \cdots + x_1x_2^2 + x_2^2x_3 + x_2^2x_4 + \cdots \in \sym^3.$$
\end{example}

\begin{definition} Similarly, $f$ is \textbf{quasisymmetric} if for all integers $k>0$,  all compositions $(\alpha_1,\dots,\alpha_k)$, and all increasing lists $i_1<i_2<\dots< i_k$ of distinct positive integers, $$f|_{x_1^{\alpha_1}x_2^{\alpha_2}\cdots x_k^{\alpha_k}}=f|_{x_{i_1}^{\alpha_1}x_{i_2}^{\alpha_2}\cdots x_{i_k}^{\alpha_k}}.$$
Let $\textnormal{QSym}^n$ be the set of all quasisymmetric functions homogeneous of degree $n$.  Then the ring of quasisymmetric functions is $$\textnormal{QSym}=\bigoplus_{n\geq 0} \textnormal{QSym}^n.$$
\end{definition}
\begin{example} \label{quasiex} Both of the following power series are quasisymmetric functions in $\qsym^3$. 
$$x_1^2x_2 + x_1^2x_3 + \cdots + x_2^2x_3 + x_2^2x_4 + \cdots$$ 
$$x_1x_2^2 + x_1x_3^2 + \cdots + x_2x_3^2 + x_2x_4^2 + \cdots$$
\end{example}
\noindent Bases for $\sym^n$ are generally indexed by partitions of $n$, while bases for $\textnormal{QSym}^n$ are indexed by strong compositions of $n$ (or subsets of $[n-1]$).

\begin{definition} Let $I=(i_1,i_2,\dots, i_k)$ be a sequence of nonnegative integers.  Then the \textbf{descent set} of $I$ is 
$$\des(I)=\{j~|~i_j>i_{j+1}\}.$$
\end{definition}

\begin{definition}\label{def:Nsymbyalph} Let $\nsym$ be the subset of $ \mathbb{C}[[\xx_1,\xx_2,...]]$ of all power series $f$ of bounded total degree with the following property:  for all integers $k>0$ and all sequences of positive integers $I=(i_1,\dots, i_k)$ and $J=(j_1, \dots, j_k)$ such that  $\des(I)=\des(J)$,  $$f|_{\xx_{i_1}\cdots \xx_{i_k}}=f|_{\xx_{j_1}\cdots \xx_{j_k}}.$$
\end{definition}
\begin{example} We give the all the terms below which involve only the variables $\xx_1$ and $\xx_2$ for a particular element of $\nsym^3$ :
$$\xx_1^3 + \xx_1^2\xx_2 - \xx_1\xx_2\xx_1 + \xx_1\xx_2^2 + 2\xx_2\xx_1^2 + 2\xx_2\xx_1\xx_2 - \xx_2^2\xx_1 + \xx_2^3+\cdots$$
\end{example} 

\noindent Below, in Corollary \ref{cor:free}, we will show that the set of noncommutative elementary symmetric functions freely generate this space, giving a concrete realization of the space originally defined more abstractly by Gelfand et al.\ in \cite{non}.
Although this characterization is immediate from work in \cite{non}, as we will see below, the literature does not appear to include an explicit characterization of the overall space as above to our knowledge.   

\begin{remark*} For any $n \in \mathbb{N}$, the elements of $\nsym$ are constant under the standard paired parenthesis action of $\mathfrak{S}_n$, studied for example by Lascoux and Leclerc in \cite{lascoux2002plactic}, and thus satisfy a reasonable analogue of being ``symmetric.''  See \cite{non}, Proposition 7.17 for the proof, although it is immediate from the fact that one of the bases of $\nsym$ is a noncommutative version of a subset of the skew Schur functions sometimes referred to as the ribbon Schur functions.  (See just after Example \ref{originalribbonex}.)  Not all power series $f$ of bounded total degree in  $ \mathbb{C}[[\xx_1,\xx_2,\dots]]$ which are closed under this action are in $\nsym$, however, so one should be careful not to take this as a definition.  (See Example \ref{notnon} below.)

It is also worth noting that the noncommutative symmetric functions defined here are completely distinct from ``Symmetric Functions in Noncommuting Variables,'' developed more recently, which are defined by being fixed under the more standard $\mathfrak{S}_n$ action which simply permutes the indices of the noncommutative variables.  (See Rosas and Sagan \cite{bs}.)  \end{remark*} 

\begin{example} \label{notnon}
For example, $f=\sum_{i\geq 1}\xx_i^2$ is fixed under the standard action of the symmetric group, and thus is a symmetric function in noncommuting variables.  It is also closed under the paired parentheses action, and yet, it is not in $\nsym$, since, for example the coefficient of $\xx_1^2$ is not equal to the coefficient of $\xx_1\xx_2$ in $f$, although the descent set of $(1,1)$ is the same as the descent set of $(1,2)$.
\end{example}

\section{Bases of \texorpdfstring{$\sym^n$}{Sym} and \texorpdfstring{$\qsym^n$}{QSym}}
\noindent There are a number of well-known bases for each of the three spaces defined above.  In this section, we will briefly cover some of the relevant ones for $\sym^n$ and $\qsym^n$, before moving on to a more careful study of the bases of $\nsym^n$ and their relations. 

\subsection{Well-Known Bases of \texorpdfstring{$\sym^n$}{Sym}}
 \begin{definition} For $\lambda\vdash n$, the  \textbf{monomial symmetric function} (associated to $\lambda \vdash n$) is 
$$m_{\lambda} = \sum_{\substack{\alpha \models_w |\lambda| \\\text{sort}(\alpha) = \lambda}} x^\alpha.$$
\end{definition}

\noindent The monomial symmetric function associated to $\lambda \vdash n$ is minimal in the following sense: if $f\in\sym$ such that $f|_{x^\lambda}=1$, then the support of $f$ contains the support of $m_\lambda$, i.e., $$\{\alpha\modelfix n : \left. f\right|_{x^\alpha}\neq 0\}\supseteq \{\alpha\modelfix n : \left. m_\lambda\right|_{x^\alpha}\neq 0\}.$$
It is easy to see $\{m_\lambda\}_{\lambda \vdash n}$ is a basis of $\sym^n$.

\begin{definition}
 For $n \in \mathbb{N}$, the $n^{\text{th}}$ \textbf{elementary symmetric function} is 
$$e_n = \sum_{1\leq i_1 < i_2 < \cdots < i_n} x_{i_1}x_{i_2}\cdots x_{i_n},$$
with $e_0 = 1$.  Then, the elementary symmetric function associated to the partition $\lambda = (\lambda_1,\lambda_2,...,\lambda_{\ell(\lambda)})$ is defined multiplicatively: $e_\lambda = e_{\lambda_1}e_{\lambda_2}\cdots e_{\lambda_{\ell(\lambda)}}$.  The Fundamental Theorem of Symmetric Polynomials gives that $\{e_{\lambda}\}_{\lambda\vdash n}$ is a basis for $\sym^n$.
\end{definition}

\begin{definition} For $n \in \mathbb{N}$, the ($n^{\text{th}}$) \textbf{complete homogeneous symmetric function} is 
$$h_n = \sum_{1\le i_1 \le i_2 \le \cdots \le i_n} x_{i_1}x_{i_2}\cdots x_{i_n},$$
with $h_0 = 1$.  The complete homogeneous symmetric functions are also defined multiplicatively: $h_\lambda = h_{\lambda_1}h_{\lambda_2}\cdots h_{\lambda_{\ell(\lambda)}}$.  The set $\{h_{\lambda}\}_{\lambda\vdash n}$ is also a basis for $\sym^n$.
\end{definition}

\begin{definition}
   For $n \in \mathbb{N}$, the $n^\text{th}$ \textbf{power sum symmetric function} is
$$p_n = \sum_{i \in \mathbb{Z}^+} x_i^n,$$
with $p_0 = 1$.  Once more, $p_\lambda = p_{\lambda_1}p_{\lambda_2}\cdots p_{\lambda_{\ell(\lambda)}}$, and $\{p_{\lambda}\}_{\lambda\vdash n}$ is a basis for $\sym^n$.
\end{definition}
\noindent One additional important basis of $\sym$, the Schur functions, can be defined using semistandard Young tableaux, which we describe next.
\begin{definition}
 The \textbf{Young diagram of shape $\lambda = (\lambda_1,\lambda_2,...,\lambda_{\ell(\lambda)}) \vdash n$} is the left-justified array of $n$ boxes with $\lambda_i$ boxes in its $i^{\text{th}}$ row from the bottom (adopting French notation), $1 \le i \le \ell(\lambda)$.  In particular, we assume the box in the $i^{\text{th}}$ row and $j^{\text{th}}$ column has its upper-right-hand corner at the integer lattice point $(i,j)$.
\end{definition}

\begin{definition} A \textbf{filling} of a Young diagram of shape $\lambda \vdash n$ is one where each of its boxes is filled with a positive integer.  The resulting filled Young diagram is called a \textbf{Young tableau}.

If in Young tableau $T$ the integers are both weakly increasing in the rows from left-to-right and strictly increasing in the columns from bottom-to-top, we call $T$ a \textbf{semi-standard Young tableau}.  The set of all semi-standard Young tableaux of shape $\lambda$ is denoted $\text{SSYT}(\lambda)$.  A \textbf{standard Young tableau} is a semistandard Young tableau in which each of the integers $\{1,2,\dots,|\lambda|\}$ occurs exactly once.
\end{definition}

\begin{definition}
    
 The \textbf{type} or \textbf{content} of a Young tableau $T$, $\text{type}(T)$, is the composition $(m_1,m_2,\dots,m_n) \modelfix|\lambda|$ 
where $m_i$ is the number of $i$'s appearing in $T$.  So in Example \ref{tableau} below, $\text{type}(T) = (2,2,3)$.
\end{definition}

\noindent We can now define the final well-known basis of $\sym$, which interpolates between the elementary and complete homogeneous symmetric functions.

\begin{definition} The \textbf{Schur function} associated to partition $\lambda \vdash n$ is
$$s_\lambda = \sum_{T \in \text{SSYT}(\lambda)} x^{\text{type}(T)}.$$
\end{definition}
\begin{definition} Let $\lambda$ be a partition of $n$. If the Young diagram of shape $\lambda$ is flipped about the southwest-northeast diagonal, the resulting Young diagram is of shape $\lambda^t$, the \textbf{transpose} or \textbf{conjugate} partition of $\lambda$.  Note that $(\lambda^t)^t = \lambda$.
\end{definition}
\begin{example} \label{tableau} The following Young tableau is semistandard, but not standard.
$$T = \begin{ytableau}
3\\
2 & 3\\
1 & 1 & 2 & 3
\end{ytableau} \in ~~\text{SSYT}(4,2,1).$$

\noindent The conjugate partition of $\lambda = (4,2,1)$ is $\lambda^t = (3,2,1,1)$.
\end{example}

\subsubsection{Involutions and the Hall inner product on \texorpdfstring{$\sym$}{Sym}}
Since $\langle e_n \rangle_{n \in \mathbb{N}}$ generates all of $\sym$, the following defines a homomorphism on $\sym$:
\begin{definition}
    Define the endomorphism $\omega: \sym \rightarrow \sym$ by setting $\omega(e_n) = h_n$ for all $n \in \mathbb{N}$. 
\end{definition} 
Then we have the following well-known theorem:
\begin{theorem} For any partition $\lambda$,
\begin{align*}\omega(e_\lambda) &= h_\lambda;\\
\omega(h_\lambda) &= e_\lambda;\\
\omega(p_\lambda) &= (-1)^{|\lambda| - \ell(\lambda)}p_\lambda;\\
\omega(s_\lambda) &= s_{\lambda^t}.
\end{align*} 
\end{theorem}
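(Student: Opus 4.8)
The plan is to reduce each line to its single-index case using that $\omega$ is a ring homomorphism, and then handle the generators. The first line is immediate: by multiplicativity, $\omega(e_\lambda)=\omega(e_{\lambda_1})\cdots\omega(e_{\lambda_{\ell(\lambda)}})=h_{\lambda_1}\cdots h_{\lambda_{\ell(\lambda)}}=h_\lambda$, directly from $\omega(e_n)=h_n$. For the second line it suffices to show $\omega(h_n)=e_n$ for each $n$, since then $\omega(h_\lambda)=e_\lambda$ by the same argument. Here I would start from the fundamental relation between the two families, $\sum_{k=0}^n(-1)^k e_k h_{n-k}=0$ for $n\ge 1$ (the coefficient extraction from $E(-t)H(t)=1$), and apply $\omega$ to obtain $\sum_{k=0}^n(-1)^k h_k\,\omega(h_{n-k})=0$. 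Comparing this with the equivalent relation $\sum_{k=0}^n(-1)^k h_k e_{n-k}=0$ and inducting on $n$ (base case $\omega(h_0)=1=e_0$) forces $\omega(h_n)=e_n$. As a byproduct this shows $\omega^2=\mathrm{id}$, justifying the name ``involution.''

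For the power-sum line, multiplicativity again reduces the claim to $\omega(p_n)=(-1)^{n-1}p_n$, because then $\omega(p_\lambda)=\prod_i(-1)^{\lambda_i-1}p_{\lambda_i}=(-1)^{\sum_i(\lambda_i-1)}p_\lambda=(-1)^{|\lambda|-\ell(\lambda)}p_\lambda$. I would prove the single-index statement from Newton's identities, most cleanly in generating-function form: writing $H(t)=\sum_n h_n t^n$ and $E(t)=\sum_n e_n t^n$, one has $\sum_{n\ge 1}\tfrac{p_n}{n}t^n=\log H(t)$ and $\sum_{n\ge 1}(-1)^{n-1}\tfrac{p_n}{n}t^n=\log E(t)$. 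Applying $\omega$ coefficientwise in $t$ to the second identity sends $E(t)\mapsto H(t)$, hence $\log E(t)\mapsto\log H(t)$, so $\sum_{n\ge1}(-1)^{n-1}\tfrac{\omega(p_n)}{n}t^n=\sum_{n\ge1}\tfrac{p_n}{n}t^n$; equating coefficients of $t^n$ gives $(-1)^{n-1}\omega(p_n)=p_n$, which is the claim. Equivalently one can induct directly on the recursive identities $ne_n=\sum_{k=1}^n(-1)^{k-1}p_ke_{n-k}$ and $nh_n=\sum_{k=1}^n p_k h_{n-k}$.

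The Schur line is the substantive one and where I expect the real work to lie, since the SSYT definition does not interact transparently with $\omega$. My plan is to pass through the Jacobi--Trudi identities. The classical Jacobi--Trudi identity writes $s_\lambda=\det\!\big(h_{\lambda_i-i+j}\big)_{1\le i,j\le \ell(\lambda)}$, and the dual (N\"agelsbach--Kostka) identity writes $s_\mu=\det\!\big(e_{(\mu^t)_i-i+j}\big)_{1\le i,j\le \mu_1}$. Since $\omega$ is a ring homomorphism it commutes with the determinant (a polynomial in the entries), so applying $\omega$ to Jacobi--Trudi and using $\omega(h_n)=e_n$ yields $\omega(s_\lambda)=\det\!\big(e_{\lambda_i-i+j}\big)_{1\le i,j\le\ell(\lambda)}$. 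Setting $\mu=\lambda^t$ in the dual identity and using $(\lambda^t)^t=\lambda$ together with $(\lambda^t)_1=\ell(\lambda)$ gives exactly $s_{\lambda^t}=\det\!\big(e_{\lambda_i-i+j}\big)_{1\le i,j\le\ell(\lambda)}$, so the two determinants coincide and $\omega(s_\lambda)=s_{\lambda^t}$. The main obstacle is thus not the application of $\omega$ but the two determinantal identities themselves; I would either cite them as classical facts or, to stay self-contained, replace this step by applying $\omega$ in the $x$-variables to the dual Cauchy identity $\prod_{i,j}(1+x_iy_j)=\sum_\lambda s_\lambda(x)s_{\lambda^t}(y)$, which $\omega$ carries to the Cauchy identity $\prod_{i,j}(1-x_iy_j)^{-1}=\sum_\mu s_\mu(x)s_\mu(y)$, and then comparing coefficients of the linearly independent family $s_{\lambda^t}(y)$.
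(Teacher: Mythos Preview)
Your proof is correct and follows the standard classical arguments for each of the four identities. However, the paper does not actually prove this theorem: it is stated as a ``well-known theorem'' immediately after defining $\omega$, with no accompanying argument, so there is no paper proof to compare against.

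That said, your approach is exactly what a self-contained treatment would do, and each step is sound. The reduction to single-index cases via multiplicativity is the right move; the induction on $\omega(h_n)=e_n$ from $E(-t)H(t)=1$ is standard; the power-sum computation via $\log H(t)$ and $\log E(t)$ is clean (and indeed the paper later uses the noncommutative analogue of $\log H(t)$ in Theorem~\ref{thm:powersumsseries}); and for the Schur case, the Jacobi--Trudi route is the most common textbook argument. Your alternative via the Cauchy/dual Cauchy identities is also valid, though it trades one piece of machinery for another. Since the paper is content to cite this result as background, your level of detail already exceeds what the paper requires.
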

\noindent By the first two lines, $\omega$ is an involution, and since $\{m_\lambda\}$ is a basis of $\sym$, the set $\{\omega(m_\lambda)\}$ must form a basis of $\sym$ as well.
\begin{definition}
The \textbf{forgotten symmetric function} associated to $\lambda$ is defined to be $f_\lambda = \omega(m_\lambda)$.\footnote{The forgotten symmetric functions are sometimes defined by $f_{\lambda} = (-1)^{|\lambda|} \omega(m_\lambda)$. (See Doubilet \cite{Doubilet}).}  Generally, the forgotten symmetric functions are defined indirectly via the map $\omega$ or the Hall inner product and duality.
\end{definition}
\begin{definition}
 The \textbf{Hall inner product}, $\langle \cdot, \cdot \rangle: \sym \times \sym \rightarrow \mathbb{C}$, is determined by 
\begin{eqnarray} \label{Hallinn} \langle m_\lambda, h_\mu \rangle = \mathbbm{1}_{\lambda = \mu}.\end{eqnarray} 
\end{definition}

\begin{theorem} The forgotten symmetric functions are dual to the elementary symmetric functions, and the Schur functions form an orthonormal basis of $\sym$ under the Hall inner product.  That is,
 \begin{align*}\langle f_\lambda, e_\mu \rangle =\langle s_\lambda, s_\mu \rangle = \mathbbm{1}_{\lambda = \mu}.  
 \end{align*}
 Furthermore, it is also true that 
$$\langle p_\lambda, p_\mu \rangle = z_\lambda \cdot \mathbbm{1}_{\lambda = \mu},$$ 
where
$$z_\lambda = \prod_{i=1}^{\ell(\lambda)} i^{m_i(\lambda)}m_i(\lambda)!$$ is the well-known combinatorial coefficient that measures the size of the centralizer of any symmetric group element having cycle type $\lambda$.
\end{theorem}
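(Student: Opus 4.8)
The plan is to reduce all three orthogonality statements to a single ``duality criterion'' and then verify a Cauchy-type generating-function identity in each case. First I would prove the following master lemma: if $\{u_\lambda\}$ and $\{v_\lambda\}$ are any two bases of $\sym^n$, then $\langle u_\lambda, v_\mu\rangle = \mathbbm{1}_{\lambda=\mu}$ for all $\lambda,\mu\vdash n$ if and only if $\sum_\lambda u_\lambda(X)\,v_\lambda(Y) = \sum_\lambda m_\lambda(X)\,h_\lambda(Y)$, where $X=(x_1,x_2,\dots)$ and $Y=(y_1,y_2,\dots)$ are two independent commuting alphabets. The proof is pure linear algebra: writing $u_\lambda = \sum_\rho A_{\lambda\rho} m_\rho$ and $v_\mu = \sum_\sigma B_{\mu\sigma} h_\sigma$, the defining relation $\langle m_\rho, h_\sigma\rangle = \mathbbm{1}_{\rho=\sigma}$ gives $\langle u_\lambda, v_\mu\rangle = (AB^T)_{\lambda\mu}$, while comparing coefficients of $m_\rho(X)h_\sigma(Y)$ shows the generating-function identity is equivalent to $A^TB = I$; since $A,B$ are square invertible change-of-basis matrices, $AB^T = I \iff A^TB = I$, which is exactly the claimed equivalence.

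Next I would record the basic Cauchy identity $\sum_\lambda m_\lambda(X)\,h_\lambda(Y) = \prod_{i,j}(1-x_iy_j)^{-1}$, which is immediate from expanding each factor $\prod_j(1-x_iy_j)^{-1} = \sum_n h_n(Y)\,x_i^n$ and collecting monomials in $X$. This identifies the right-hand side of the master lemma with the familiar Cauchy kernel, so each remaining statement becomes the assertion that a particular generating function equals $\prod_{i,j}(1-x_iy_j)^{-1}$.

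For the Schur functions I would prove $\sum_\lambda s_\lambda(X)\,s_\lambda(Y) = \prod_{i,j}(1-x_iy_j)^{-1}$ via the Robinson--Schensted--Knuth correspondence: the kernel expands as $\sum_M \prod_{i,j}(x_iy_j)^{M_{ij}}$ over $\mathbb{N}$-valued matrices $M$ with finite support, and RSK gives a weight-preserving bijection between such matrices and pairs $(P,Q)$ of semistandard tableaux of a common shape $\lambda$, whose generating function is exactly $\sum_\lambda s_\lambda(X)\,s_\lambda(Y)$. With the master lemma and the basic Cauchy identity this yields $\langle s_\lambda, s_\mu\rangle = \mathbbm{1}_{\lambda=\mu}$. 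For the power sums I would instead take the logarithm of the kernel, obtaining $\sum_{n\geq 1} p_n(X)p_n(Y)/n$, and re-exponentiate; grouping the resulting terms according to the partition $\lambda$ recording the multiplicities of each part produces $\sum_\lambda p_\lambda(X)p_\lambda(Y)/z_\lambda = \prod_{i,j}(1-x_iy_j)^{-1}$. Applying the master lemma to the bases $\{p_\lambda\}$ and $\{p_\lambda/z_\lambda\}$ then gives $\langle p_\lambda, p_\mu\rangle = z_\lambda\,\mathbbm{1}_{\lambda=\mu}$.

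Finally, the forgotten/elementary duality follows formally once Schur orthonormality is established. Since $\omega(s_\lambda) = s_{\lambda^t}$ and $\lambda\mapsto\lambda^t$ permutes the partitions of $n$, the map $\omega$ sends the orthonormal basis $\{s_\lambda\}$ to the orthonormal basis $\{s_{\lambda^t}\}$; expanding arbitrary elements in the Schur basis then shows $\langle \omega(f), \omega(g)\rangle = \langle f, g\rangle$, so $\omega$ is an isometry (and, as a byproduct, the inner product is symmetric). Hence $\langle f_\lambda, e_\mu\rangle = \langle \omega(m_\lambda), \omega(h_\mu)\rangle = \langle m_\lambda, h_\mu\rangle = \mathbbm{1}_{\lambda=\mu}$, using $f_\lambda = \omega(m_\lambda)$ and $e_\mu = \omega(h_\mu)$. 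I expect the RSK step to be the main obstacle: the linear algebra, the basic Cauchy expansion, the $\log$/$\exp$ manipulation, and the $\omega$-isometry argument are all short, whereas establishing the Schur--Cauchy identity requires the full weight-preserving RSK bijection (or an equivalent substitute such as the Lindstr\"{o}m--Gessel--Viennot lemma), which is the genuine combinatorial content.
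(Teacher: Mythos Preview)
Your argument is correct and follows the standard textbook route (essentially Stanley, \emph{EC2}, \S7.9--7.12, or Macdonald, Chapter~I): the duality criterion via the Cauchy kernel, RSK for Schur orthonormality, the $\log$/$\exp$ computation for the power sums, and then the isometry of $\omega$ to transport $\langle m_\lambda, h_\mu\rangle = \mathbbm{1}_{\lambda=\mu}$ to $\langle f_\lambda, e_\mu\rangle = \mathbbm{1}_{\lambda=\mu}$. There is nothing to compare against in the paper itself: this theorem appears in the background section on $\sym$ and is simply \emph{stated} as well known, with the reader referred to \cite{Macdonald} or \cite{ECII} for proofs; the paper's own work begins only once it moves to $\nsym$. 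So your proposal supplies exactly what the paper deliberately omits, and the route you chose is the one those references take.
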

\noindent It can be also be shown that $\omega$ is an isometry with respect to the Hall inner product, i.e., for any $g,g' \in \sym$, 
$$\langle \omega(g),\omega(g') \rangle = \langle g,g' \rangle.$$  

\noindent Figure \ref{Duality of Sym} summarizes the results we have provided pertaining to the relationships between the various bases of $\sym$ under $\omega$ and $\langle \cdot, \cdot \rangle$.

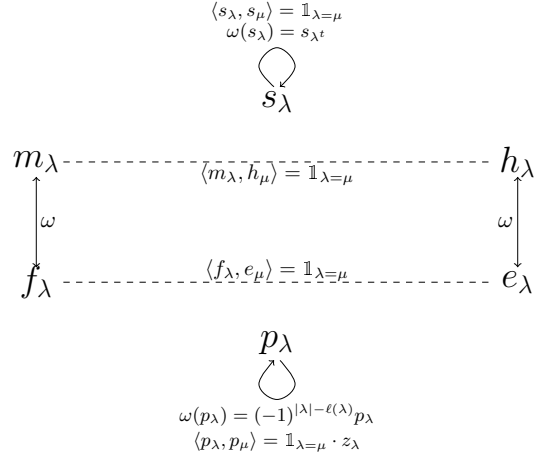
\begin{figure}
$$\begin{tikzpicture}
    \useasboundingbox (-3.5,-3) rectangle (3.5,3);
      \scope[transform canvas={scale=.8}]
\node at (0,2) {\Large{$s_\lambda$}};
\node at (-4,1) {\Large{$m_\lambda$}};
\node at (-4,-1) {\Large{$f_\lambda$}};
\node at (4,1) {\Large{$h_\lambda$}};
\node at (4,-1) {\Large{$e_\lambda$}};
\node at (0,-2) {\Large{$p_\lambda$}};

\node at (3.8,0) {\small{$\omega$}};
\node at (-3.8,0) {\small{$\omega$}};
\node at (0,-3.2) {\scriptsize{$\omega(p_\lambda) = (-1)^{|\lambda| - \ell(\lambda)}p_\lambda$}};
\node at (0,3.15) {\scriptsize{$\omega(s_\lambda) = s_{\lambda^t}$}};
\node at (0,-3.65) {\scriptsize{$\langle p_\lambda, p_\mu \rangle = \mathbbm{1}_{\lambda = \mu} \cdot z_\lambda$}};
\node at (0,3.5) {\scriptsize{$\langle s_\lambda, s_\mu \rangle = \mathbbm{1}_{\lambda = \mu}$}};
\node at (0,0.8) {\footnotesize{$\langle m_\lambda, h_\mu \rangle = \mathbbm{1}_{\lambda = \mu}$}};
\node at (0,-0.8) {\footnotesize{$\langle f_\lambda, e_\mu \rangle = \mathbbm{1}_{\lambda = \mu}$}};

\draw [-latex][<->] plot[smooth, tension=.7] coordinates {(-4,0.75) (-4,-0.75)};
\draw [-latex][<->] plot[smooth, tension=.7] coordinates {(4,-0.75) (4,0.75)};
\draw [-latex][->] plot[smooth, tension=.7] coordinates {(0.05,-2.3) (0.3,-2.7)(0,-2.95)(-0.3,-2.7)(-0.05,-2.3)};
\draw [-latex][->] plot[smooth, tension=.7] coordinates {(-0.05,2.25) (-0.3,2.6)(0,2.9)(0.3,2.6)(0.05,2.25)};
\draw [dashed][-] plot[smooth, tension=.7] coordinates {(-3.55,1)(3.55,1)};
\draw [dashed][-] plot[smooth, tension=.7] coordinates {(-3.55,-1)(3.55,-1)};
\endscope
\end{tikzpicture}$$
    \caption{~Images of $\omega$ and Self-Duality of $\sym$}
    \label{Duality of Sym}
\end{figure}

\subsection{Well-Known Bases of \texorpdfstring{$\qsym$}{QSym}}  As mentioned above, bases for $\qsym$ are indexed by strong compositions of $n$ or subsets of $[n-1]$.
A first basis for $\qsym$ is the following:
\begin{definition} For any strong composition $\alpha$, the
    \textbf{monomial quasisymmetric function} (associated to $\alpha \modelsstrong n$) is
$$M_\alpha = \sum_{\substack{\beta \models_w |\alpha| \\ \text{strong}(\beta) = \alpha}} x^\beta.$$
\end{definition}
\noindent The monomial quasisymmetric function associated to $\alpha \modelsstrong n$ is minimal in the following sense: if $f\in\qsym$ such that $f|_{x^\alpha}=1$, then the support of $f$ contains the support of $M_\alpha$, i.e., $$\{\beta\modelfix n : \left. f\right|_{x^\beta}\neq 0\}\supseteq \{\beta\modelfix n : \left. M_\alpha\right|_{x^\beta}\neq 0\}.$$    
\noindent For any $\lambda \vdash n$,
$$m_\lambda = \sum_{\substack{\alpha \modelsstrong n \\ \text{sort}(\alpha) = \lambda}} M_\alpha.$$

\noindent Another well-known basis of $\textnormal{QSym}^n$ is \textbf{Gessel's fundamental basis}, so called since it was defined by Gessel in \cite{Gessel}.  It is one of several bases of quasisymmetric functions considered a generalization of the Schur symmetric function basis.
\begin{definition} For $\alpha \modelsstrong n$, the \textbf{Gessel Fundamental quasisymmetric function} associated to $\alpha$ is
\begin{eqnarray*} \label{QSymFtoM}
F_\alpha = \sum_{\substack{i_1 \le i_2 \le \cdots \le i_n\\ k \in \text{set}(\alpha)\Rightarrow i_k < i_{k+1}}} x_{i_1}x_{i_2}\cdots x_{i_n}.
\end{eqnarray*}
 
\end{definition}

\noindent There are two other commonly studied quasisymmetric analogues of the Schur functions: the quasisymmetric Schur functions, first defined by Haglund et al.\ in \cite{haglund2011quasisymmetric}, and the dual immaculate functions, defined by duality to the immaculate noncommutative symmetric functions by Berg et al.\ in \cite{berg2014lift}.  Images of these bases under the automorphisms below are also sometimes studied.  We also mention for completeness that there are two quasisymmetric analogues of the power sums, which were first defined indirectly as dual bases, as described in the next section.

There are not one, but three natural analogues of $\omega$ defined on $\qsym$.  As in \cite{intro}, define $\rho, \psi, \omega: \textnormal{QSym} \rightarrow \textnormal{QSym}$, all automorphisms, by \begin{eqnarray}
\rho(F_\alpha) &=& F_{\alpha^r}; \label{QSymrho}
\\ \psi(F_\alpha) &=& F_{\alpha^c}; \label{QSympsi}
\\ \omega(F_\alpha) &=& F_{\alpha^t}. \label{QSymomega}
\end{eqnarray}
The names for these maps are not at all uniform in the literature, so here, as elsewhere, we follow the convention of \cite{intro}.

\section{Bases for \texorpdfstring{$\nsym$}{NSym}}
\noindent We begin with the basis for the space with the minimal number of terms, the noncommutative ribbon Schur functions.
\begin{definition} The \textbf{noncommutative ribbon Schur function} associated to $\alpha \modelsstrong n$ is
\begin{eqnarray*} \label{Symribbontox}
\nr_\alpha = \sum_{\substack{I=(i_1,i_2,\dots, i_n) \in (\mathbb{Z}^+)^n \text{ s.t. }\\ \des\left(I\right)=\set(\alpha)}} \xx_{i_1}\xx_{i_2}\cdots \xx_{i_n}.
\end{eqnarray*}
 \end{definition}
 \begin{theorem}[\cite{non}, section 4.4]
     $\{\nr_{\alpha}\}_{\alpha\modelsstrong n}$ is a basis for $\nsym^n$.
 \end{theorem}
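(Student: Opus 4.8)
The plan is to exploit the fact that the defining condition of $\nsym$ forces the coefficient of a monomial to depend only on the descent set of its index sequence, together with the observation that the descent sets of length-$n$ sequences of positive integers are exactly the subsets of $[n-1]$, hence in bijection with strong compositions of $n$. Once this is set up, both membership $\nr_\alpha \in \nsym^n$ and the basis property follow from a clean ``partition of the monomials'' argument, where I take $\nsym^n$ to be the homogeneous degree-$n$ part of $\nsym$, in parallel with $\sym^n$ and $\qsym^n$.

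First I would check that each $\nr_\alpha$ genuinely lies in $\nsym^n$. By definition the coefficient of $\xx_{i_1}\cdots \xx_{i_n}$ in $\nr_\alpha$ equals $\mathbbm{1}_{\des(I)=\set(\alpha)}$, which depends only on $\des(I)$; since $\nr_\alpha$ is homogeneous of degree $n$, and hence of bounded total degree, the condition of Definition \ref{def:Nsymbyalph} is satisfied, so $\nr_\alpha \in \nsym^n$. For linear independence, I would observe that a degree-$n$ monomial $\xx_{i_1}\cdots \xx_{i_n}$ determines the sequence $I=(i_1,\dots,i_n)$ exactly (the variables do not commute), hence determines a single descent set $\des(I)\subseteq [n-1]$, which equals $\set(\alpha)$ for exactly one $\alpha \modelsstrong n$ under the classical composition--subset bijection. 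Thus the supports of the $\nr_\alpha$ are pairwise disjoint, and each is nonempty since every $S\subseteq[n-1]$ is realized as the descent set of some length-$n$ sequence of positive integers. A nontrivial relation $\sum_\alpha c_\alpha \nr_\alpha = 0$ would then force each $c_\alpha=0$ by reading off any monomial in the support of $\nr_\alpha$.

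For spanning, I would take an arbitrary $f \in \nsym^n$. Every monomial of $f$ has the form $\xx_{i_1}\cdots \xx_{i_n}$, and for each $\alpha \modelsstrong n$ I set $c_\alpha := f|_{\xx_{i_1}\cdots \xx_{i_n}}$ for any $I$ with $\des(I)=\set(\alpha)$; this is well defined precisely by the defining property of $\nsym$. Comparing coefficients on an arbitrary degree-$n$ monomial then gives $f = \sum_{\alpha \modelsstrong n} c_\alpha \nr_\alpha$, since the coefficient of $\xx_{i_1}\cdots \xx_{i_n}$ on the right is $c_\alpha$ with $\set(\alpha)=\des(I)$, matching the left.

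The only non-formal step, and hence the main point requiring care, is the surjectivity of the descent map onto $2^{[n-1]}$ (so that every $\nr_\alpha$ is nonempty and every monomial's descent set corresponds to a genuine composition), combined with the fact that $\nsym^n$ consists of homogeneous elements, so that exactly the length-$n$ instances of the condition in Definition \ref{def:Nsymbyalph} are relevant. Everything else is bookkeeping with the composition--subset correspondence.
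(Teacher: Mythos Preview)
Your proposal is correct and follows essentially the same approach as the paper's proof: disjoint, nonempty supports give independence, and the defining condition of $\nsym$ (coefficients depend only on descent set) gives spanning. The paper's version is terser---it simply says spanning is ``clear from the definition of the space'' and handles independence with the observation that each monomial $\xx_{i_1}\cdots\xx_{i_n}$ appears in exactly one $\nr_\alpha$, namely $\alpha=\set^{-1}(\des(I))$---while you have spelled out the verification that $\nr_\alpha\in\nsym^n$ and the surjectivity of the descent map more explicitly.
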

 \begin{proof}
It is clear from the definition of the space  that $\nsym$ is the span of the noncommutative ribbon Schur functions.  To see they are independent, note that for every sequence of positive integers $I=(i_1,i_2,\dots,i_n)$, the monomial $\xx_{i_1}\xx_{i_2}\cdots \xx_{i_n}$ occurs uniquely  with positive coefficient in exactly one $\nr_\alpha$; in particular, where $\alpha=\set^{-1}(\des(I))$.
\end{proof}
\noindent The following theorem also follows easily from the definition.
\begin{theorem}\label{thm:ribbon_mult}  For strong compositions $\alpha=(\alpha_1,\alpha_2,\dots, \alpha_n)$ and $\beta=(\beta_1,\beta_2,\dots,\beta_k)$,
    $$\nr_{\alpha} \nr_{\beta}=\nr_{(\alpha_1,\dots, \alpha_n,\beta_1,\dots,\beta_k)}+\nr_{(\alpha_1,\dots,\alpha_{n-1}, \alpha_n+\beta_1,\beta_2,\dots,\beta_k)}$$
\end{theorem}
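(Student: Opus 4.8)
The plan is to prove this multiplication rule directly from the monomial definition of $\nr_\alpha$, working entirely in terms of descent sets of integer sequences. Write $\alpha = (\alpha_1,\dots,\alpha_n) \modelsstrong m$ and $\beta = (\beta_1,\dots,\beta_k) \modelsstrong p$, so that $\nr_\alpha$ is a sum of monomials $\xx_{i_1}\cdots\xx_{i_m}$ over sequences $I$ with $\des(I) = \set(\alpha)$, and similarly $\nr_\beta$ is a sum over sequences $J=(j_1,\dots,j_p)$ with $\des(J) = \set(\beta)$. Multiplying these two power series, the product $\nr_\alpha \nr_\beta$ is the sum over all \emph{concatenated} sequences $IJ = (i_1,\dots,i_m,j_1,\dots,j_p)$ where $I$ and $J$ independently range over the sequences with the prescribed descent sets. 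The heart of the argument is to understand exactly which descent sets of the combined length-$(m+p)$ sequence $IJ$ can occur.

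The key observation is that concatenating $I$ and $J$ forces the descent set of $IJ$ to equal $\des(I) \cup (\{m\} \text{ or nothing}) \cup (m + \des(J))$, where the crucial ambiguity lies at the ``seam'' position $m$. First I would note that within the first block the descents are exactly $\set(\alpha) \subseteq [m-1]$ and within the second block they are $m + \set(\beta) \subseteq \{m+1,\dots,m+p-1\}$; these are completely determined. The only position whose descent status is \emph{not} forced by $\des(I)$ and $\des(J)$ is position $m$, which records whether $i_m > j_1$. Since $I$ and $J$ range freely over sequences with the given descent sets, one can independently realize both $i_m > j_1$ (giving a descent at $m$) and $i_m \le j_1$ (giving no descent at $m$); moreover every monomial of $IJ$ appears with coefficient exactly $1$ because the splitting point between the two blocks is unambiguous once the total length is fixed at $m+p$. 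Thus the product decomposes as a sum over the two cases for position $m$.

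Translating back to compositions via $\set$, the case with a descent at position $m$ corresponds to the set $\set(\alpha) \cup \{m\} \cup (m+\set(\beta))$, which is the set of the concatenation $(\alpha_1,\dots,\alpha_n,\beta_1,\dots,\beta_k)$; the case with no descent at $m$ corresponds to $\set(\alpha) \cup (m+\set(\beta))$, which is precisely the set of $(\alpha_1,\dots,\alpha_{n-1},\alpha_n+\beta_1,\beta_2,\dots,\beta_k)$, since omitting $m$ merges the last part of $\alpha$ with the first part of $\beta$. This yields exactly the two terms on the right-hand side, as claimed.

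I expect the main obstacle to be the careful bookkeeping at the seam: one must verify both that position $m$ is genuinely a ``free'' descent (i.e. that for any choice of descent or non-descent at $m$ there really exist valid completions $I,J$ realizing it, with no interaction between the blocks) and that no monomial is double-counted. The second point deserves care because a single monomial $\xx_{i_1}\cdots\xx_{i_{m+p}}$ could a priori arise from more than one way of splitting; but since the lengths $m$ and $p$ are fixed, the split point is determined, so each monomial on the right arises from a unique $(I,J)$ pair, confirming all coefficients are $1$. The rest is the routine but unavoidable translation between descent sets in $[m+p-1]$ and the corresponding strong compositions.
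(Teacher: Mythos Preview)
Your proposal is correct and is exactly the argument the paper has in mind: the paper does not give an explicit proof but simply remarks that the theorem ``follows easily from the definition,'' and your monomial/descent-set analysis at the seam position $m$ is precisely that direct verification. The bookkeeping you flag (freedom at position $m$, uniqueness of the split) is handled correctly.
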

\noindent   The noncommutative ribbon Schur function $\nr_\alpha$ is a minimal noncommutive symmetric function in the following sense: if $\boldsymbol{f}\in\nsym$ such that $\boldsymbol{f}|_{{(\xx_{\ell(\alpha)})}^{\alpha_1}(\xx_{\ell(\alpha)-1})^{\alpha_2}\cdots ({\xx_1})^{\alpha_{\ell(\alpha)}}}=1$, then the support of $\boldsymbol{f}$ contains the support of $r_\alpha$.  That is, for all $n$,  $$\{(i_1,i_2,\dots,i_n) : \left. \boldsymbol{f}\right|_{\xx_{i_1}\xx_{i_2}\cdots \xx_{i_n}}\neq 0\}\supseteq \{(i_1,i_2,\dots,i_n) : \left. \nr_\alpha\right|_{\xx_{i_1}\xx_{i_2}\cdots \xx_{i_n}}\neq 0\}.$$ 

\noindent While the noncommutative ribbon Schur functions share a minimality condition with the monomial symmetric functions and the monomial quasisymmetric functions, they, like the fundamental quasisymmetric functions, are usually considered an analogue of the Schur functions for a number of reasons. 
 \begin{definition}
A \textbf{skew Young diagram} associated to $\lambda/ \mu$ is realized by taking a Young diagram, say of shape $\lambda$, and removing $\mu$, a Young diagram sitting inside it.  If the resulting skew Young diagram is connected and contains no $2 \times 2$ boxes, it is called a \textbf{ribbon Young diagram}.
\end{definition}
\begin{example}
The skew Young diagram $(3,2,1)/(1)$ is a ribbon Young diagram since it has no $2 \times 2$ boxes, as seen below.
$$\begin{tikzpicture}
\draw  (0,0) rectangle (.5,.5);
\draw  (.5,0) rectangle (1,.5); 
\draw  (1,0) rectangle (1.5,.5);
\draw  (0,1) rectangle (0.5,1.5);
\draw  (0,.5) rectangle (.5,1);
\draw  (.5,.5) rectangle (1,1);
\draw (2.75,0) rectangle (3.25,0.5);
\node (v1) at (2,0) {};
\node (v2) at (2.5,1.5) {};
\draw  (v1) edge (v2);
\node at (3.75,0.7) {$=$};
\draw  (4.95,0) rectangle (5.45,0.5); 
\draw  (5.45,0) rectangle (5.95,0.5);
\draw  (4.45,1) rectangle (4.95,1.5);
\draw  (4.45,0.5) rectangle (4.95,1);
\draw  (4.95,0.5) rectangle (5.45,1);
\end{tikzpicture}$$
\end{example}
\noindent To stay consistent with \cite{non}, say that this is the ribbon of \textbf{shape} $(1,2,2)$, the lengths of the resulting rows when read from \textit{top-to-bottom}.

It is from these combinatorial objects that the basis above gets its name.  If the noncommutative ribbon Schur function $\boldsymbol{r}_\alpha$ is expanded in the noncommuting variables $\boldsymbol{x}$, the indices appearing on the monomials appearing (left-to-right) in $\boldsymbol{r}_\alpha$ fill in ribbons of shape $\alpha$ to yield \textbf{ribbon tableaux} of shape $\alpha$.
\begin{definition}
 A \textbf{ribbon tableau of shape $\alpha$} is a filling of the ribbon of shape $\alpha$ with positive integers that weakly increase across rows, left-to-right, and increase along columns, bottom-to-top.  That is, ribbon tableaux are just semistandard Young tableaux of ribbon shape.
\end{definition}
\begin{example} \label{originalribbonex} Let $\alpha = (1,2,2)$.  Then one of infinitely many ribbon tableaux of shape $\alpha$ is the one corresponding to the monomial $\boldsymbol{x}_3\boldsymbol{x}_1\boldsymbol{x}_2\boldsymbol{x}_1^2,$ depicted below.

$$\begin{tikzpicture}[scale=0.5]
\draw  (1,0) rectangle (2,1); 
\draw  (2,0) rectangle (3,1);
\draw  (0,2) rectangle (1,3);
\draw  (0,1) rectangle (1,2);
\draw  (1,1) rectangle (2,2);
\node at (0.5,2.5) {3};
\node at (0.5,1.5) {1};
\node at (1.5,1.5) {2};
\node at (1.5,0.5) {1};
\node at (2.5,0.5) {1};
\end{tikzpicture}$$
\end{example}
\begin{definition} Let $\chi:\mathbb{C}[[\xx_1,\xx_2,\cdots]]\rightarrow \mathbb{C}[[x_1,x_2,\cdots]]$ be the ``forgetful'' function that sends the noncommutative variables to their commutative analogues: i.e.\ $\chi(\xx_i)=x_i$. 
\end{definition}
Then $\chi(\nr_\alpha)=r_\alpha$, the commutative ribbon function $\alpha$; it is also the skew Schur function of ribbon shape $\alpha$. The set $\{r_\alpha\}_{\alpha \models n}$ forms a spanning set of $\sym^n$, since for $\lambda\vdash n$, $h_\lambda=\sum_{\set(\alpha)\subseteq \set(\lambda)}r_\alpha$ and thus $\chi(\nsym)=\sym$.

With a generalization of the Hall inner product defined in \cite{non}, $\textnormal{QSym}$ and $\textbf{NSym}$ are dual spaces.  Define $\langle \cdot, \cdot \rangle: \textnormal{QSym} \times \textbf{NSym} \rightarrow \mathbb{C}$, where
\begin{eqnarray} \label{QNinnerproduct}
\langle F_\alpha, \nr_\beta \rangle = \mathbbm{1}_{\alpha = \beta}.
\end{eqnarray}

The combined results of Gessel in \cite{Gessel} and Malvenuto and Reutenauer in \cite{MalvenutoReutenauer} imply that $\qsym$ and $\nsym$ are dual with respect to this inner product, as first observed in \cite{non}.

There are three natural involutions $f$ on $\nsym$ that when composed with the forgetful map $\chi$ give $\chi\circ f=\omega\circ \chi$ that correspond to the three involutions on $\qsym$ mentioned above.

\begin{definition} Let $\boldsymbol{\rho},\boldsymbol{\psi}, \boldsymbol{\omega}:\textbf{NSym} \rightarrow \textbf{NSym}$ be linear transformations that satisfy the following:
\begin{eqnarray}
\boldsymbol{\rho}(\boldsymbol{r}_\alpha) &=& \boldsymbol{r}_{\alpha^r}  \label{NSymrho}
\\ \boldsymbol{\psi}(\boldsymbol{r}_\alpha) &=& \boldsymbol{r}_{\alpha^c}  \label{NSympsi}
\\ \boldsymbol{\omega}(\boldsymbol{r}_\alpha) &=& \boldsymbol{r}_{\alpha^t}  \label{NSymomega}
\end{eqnarray} 
\end{definition}

\noindent Here again we adopt the notation as in \cite{intro} and we note, just as they did, that $\boldsymbol{\psi}$ is an automorphism, and that $\boldsymbol{\rho}$ and $\boldsymbol{\omega}$ are anti-automorphisms. 
The following theorem follows easily from Theorem \ref{thm:ribbon_mult} and the above definitions. \begin{theorem} For any strong compositions $\alpha$ and $\beta$,
    \begin{align*}
 \boldsymbol{\rho}(\boldsymbol{r}_\alpha\boldsymbol{r}_\beta) = \boldsymbol{\rho}(\boldsymbol{r}_\beta)\boldsymbol{\rho}(\boldsymbol{r}_\alpha);
\\\boldsymbol{\psi}(\boldsymbol{r}_\alpha\boldsymbol{r}_\beta) = \boldsymbol{\psi}(\boldsymbol{r}_\alpha)\boldsymbol{\psi}(\boldsymbol{r}_\beta);
\\ \boldsymbol{\omega}(\boldsymbol{r}_\alpha\boldsymbol{r}_\beta) = \boldsymbol{\omega}(\boldsymbol{r}_\beta)\boldsymbol{\omega}(\boldsymbol{r}_\alpha). 
\end{align*} 
\end{theorem}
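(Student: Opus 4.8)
The plan is to reduce all three identities to the multiplication rule of Theorem~\ref{thm:ribbon_mult} together with two elementary identities describing how concatenation and near-concatenation of compositions interact with the reverse and complement operations. Write $\alpha\cdot\beta=(\alpha_1,\dots,\alpha_n,\beta_1,\dots,\beta_k)$ for the concatenation and $\alpha\odot\beta=(\alpha_1,\dots,\alpha_{n-1},\alpha_n+\beta_1,\beta_2,\dots,\beta_k)$ for the near-concatenation, so that Theorem~\ref{thm:ribbon_mult} reads $\nr_\alpha\nr_\beta=\nr_{\alpha\cdot\beta}+\nr_{\alpha\odot\beta}$. Since each of $\boldsymbol{\rho},\boldsymbol{\psi},\boldsymbol{\omega}$ is the linear map sending $\nr_\gamma$ to $\nr_{\gamma^r},\nr_{\gamma^c},\nr_{\gamma^t}$ respectively, it suffices to apply each to this two-term expansion and re-fold the result.

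First I would handle $\boldsymbol{\rho}$. The only fact needed is that reversal sends a concatenation to the concatenation in the opposite order, and likewise for near-concatenation: $(\alpha\cdot\beta)^r=\beta^r\cdot\alpha^r$ and $(\alpha\odot\beta)^r=\beta^r\odot\alpha^r$. Both are immediate from the explicit formulas, the second because the merged part $\alpha_n+\beta_1$ equals $\beta_1+\alpha_n$, which is exactly the merge point of $\beta^r\odot\alpha^r$. Applying $\boldsymbol{\rho}$ then gives $\boldsymbol{\rho}(\nr_\alpha\nr_\beta)=\nr_{\beta^r\cdot\alpha^r}+\nr_{\beta^r\odot\alpha^r}=\nr_{\beta^r}\nr_{\alpha^r}=\boldsymbol{\rho}(\nr_\beta)\boldsymbol{\rho}(\nr_\alpha)$, again by Theorem~\ref{thm:ribbon_mult}.

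Next I would treat $\boldsymbol{\psi}$, which is where the one genuinely non-formal point lies: complementation interchanges concatenation and near-concatenation, namely $(\alpha\cdot\beta)^c=\alpha^c\odot\beta^c$ and $(\alpha\odot\beta)^c=\alpha^c\cdot\beta^c$. I would prove this through the $\set$ description. Writing $a=|\alpha|$, $b=|\beta|$, $N=a+b$, and letting $a+S$ denote $\{a+s:s\in S\}$, one has $\set(\alpha\cdot\beta)=\set(\alpha)\sqcup\{a\}\sqcup(a+\set(\beta))$ while $\set(\alpha\odot\beta)=\set(\alpha)\sqcup(a+\set(\beta))$, so the two differ precisely in whether the boundary index $a$ is present. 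Taking complements in $[N-1]$ and splitting $[N-1]=[a-1]\sqcup\{a\}\sqcup(a+[b-1])$, a direct computation yields $[N-1]\setminus\set(\alpha\cdot\beta)=\set(\alpha^c)\sqcup(a+\set(\beta^c))=\set(\alpha^c\odot\beta^c)$ and likewise $[N-1]\setminus\set(\alpha\odot\beta)=\set(\alpha^c\cdot\beta^c)$. Hence $\boldsymbol{\psi}(\nr_\alpha\nr_\beta)=\nr_{(\alpha\cdot\beta)^c}+\nr_{(\alpha\odot\beta)^c}=\nr_{\alpha^c\odot\beta^c}+\nr_{\alpha^c\cdot\beta^c}=\nr_{\alpha^c}\nr_{\beta^c}=\boldsymbol{\psi}(\nr_\alpha)\boldsymbol{\psi}(\nr_\beta)$, so $\boldsymbol{\psi}$ is multiplicative.

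Finally, I would deduce the $\boldsymbol{\omega}$ statement for free rather than repeat a computation. Since $\gamma^t=(\gamma^c)^r$, we have $\boldsymbol{\omega}=\boldsymbol{\rho}\circ\boldsymbol{\psi}$ on the basis $\{\nr_\gamma\}$, hence everywhere by linearity; composing the multiplicative map $\boldsymbol{\psi}$ with the anti-multiplicative map $\boldsymbol{\rho}$ gives $\boldsymbol{\omega}(\nr_\alpha\nr_\beta)=\boldsymbol{\rho}(\boldsymbol{\psi}(\nr_\alpha)\boldsymbol{\psi}(\nr_\beta))=\boldsymbol{\rho}(\boldsymbol{\psi}(\nr_\beta))\boldsymbol{\rho}(\boldsymbol{\psi}(\nr_\alpha))=\boldsymbol{\omega}(\nr_\beta)\boldsymbol{\omega}(\nr_\alpha)$. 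The only real content is the complement identity in the $\boldsymbol{\psi}$ step; the reverse identities and the $\boldsymbol{\omega}$ reduction are purely formal, which is why the theorem ``follows easily.''
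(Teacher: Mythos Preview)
Your proof is correct and follows exactly the approach the paper indicates: it says only that the result ``follows easily from Theorem~\ref{thm:ribbon_mult} and the above definitions,'' and your argument spells out precisely those easy details---the behavior of reverse and complement on concatenation and near-concatenation, together with $\boldsymbol{\omega}=\boldsymbol{\rho}\circ\boldsymbol{\psi}$. There is nothing to add.
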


\noindent For any $f \in \textnormal{QSym}$ and $\boldsymbol{g} \in \textbf{NSym}$,
\begin{eqnarray} \label{InnProinvos}
\langle f, \boldsymbol{g} \rangle = \langle \rho(f),\boldsymbol{\rho}(\boldsymbol{g}) \rangle = \langle \psi(f),\boldsymbol{\psi}(\boldsymbol{g}) \rangle = \langle \omega(f),\boldsymbol{\omega}(\boldsymbol{g}) \rangle.
\end{eqnarray}

\begin{definition} Let $\nel_0=1$, $\nh_0=1$, and for any positive integer $n$, define
\begin{eqnarray}  
\nel_n = \sum_{i_1 > i_2 > \cdots > i_n\geq 1} \xx_{i_1}\xx_{i_2}\cdots \xx_{i_n}; \hspace{1cm} \nh_n = \sum_{1\le i_1 \le i_2 \le \cdots \le i_n} \xx_{i_1}\xx_{i_2} \cdots \xx_{i_n}.
\end{eqnarray}
Then $\nel_n$ is the $n^\text{th}$ \textbf{noncommutative elementary symmetric function} and $\nh_n$ is the $n^\text{th}$ \textbf{noncommutative homogeneous complete symmetric function}.  
For any $\alpha\modelsstrong n$, let

$$\nel_\alpha = \nel_{\alpha_1}\nel_{\alpha_2}\cdots \nel_{\alpha_{\ell(\alpha)}} \text{ and } \nh_\alpha = \nh_{\alpha_1} \nh_{\alpha_2}\cdots \nh_{\alpha_{\ell(\alpha)}}.$$
\end{definition}

\noindent It is easy to see the following from the definitions of $\nh_n$, $\nel_n$, and $\nr_\alpha$.
\begin{theorem} For all $n\geq 0$, and any $\alpha \modelsstrong n$,
$$\chi(\nel_n)=e_n\text{,  }\chi(\nh_n)=h_n\text{, and }\chi(\nr_\alpha)=r_\alpha.$$
\end{theorem}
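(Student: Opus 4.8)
The plan is to prove each of the three identities directly by applying the forgetful map $\chi$ to the defining sums and recognizing the result as the corresponding commutative symmetric function. Recall that $\chi$ simply erases the noncommutativity, sending $\xx_i \mapsto x_i$; since it is defined as a map on the full power series ring $\mathbb{C}[[\xx_1,\xx_2,\dots]]$, it sends each noncommutative monomial $\xx_{i_1}\xx_{i_2}\cdots\xx_{i_n}$ to the commutative monomial $x_{i_1}x_{i_2}\cdots x_{i_n}$, with coefficients preserved termwise. So for each of the three bases the strategy is the same: push $\chi$ through the sum, collect the resulting commutative monomials, and compare against the defining formula for $e_n$, $h_n$, or $r_\alpha$.

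For $\nh_n$ the comparison is immediate: the defining sum for $\nh_n$ ranges over $1 \le i_1 \le i_2 \le \cdots \le i_n$, which is literally the same index set as the defining sum for $h_n$, so applying $\chi$ termwise gives $\chi(\nh_n) = h_n$ with no rearrangement needed. For $\nel_n$ there is a small cosmetic discrepancy: the noncommutative sum is indexed by strictly decreasing sequences $i_1 > i_2 > \cdots > i_n \ge 1$, whereas the classical $e_n$ is indexed by strictly increasing sequences $i_1 < i_2 < \cdots < i_n$. Since $\chi$ maps into a \emph{commutative} ring, the monomial $x_{i_1}x_{i_2}\cdots x_{i_n}$ does not depend on the order of the factors, and the decreasing sequences are in obvious order-reversing bijection with the increasing ones; hence $\chi(\nel_n) = e_n$. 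I would state this bijection explicitly (the reversal $(i_1,\dots,i_n)\mapsto(i_n,\dots,i_1)$) to make clear why the direction of the inequality is irrelevant after applying $\chi$.

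For $\nr_\alpha$ I would first note that the statement $\chi(\nr_\alpha) = r_\alpha$ is essentially forced, since the commutative ribbon function $r_\alpha$ is \emph{defined} in the text as $\chi(\nr_\alpha)$; thus the only content is to confirm this agrees with the earlier-asserted identity $r_\alpha = \sum_{\set(\beta)\subseteq\set(\alpha)} \cdots$ and the characterization of $r_\alpha$ as the skew Schur function of ribbon shape. I would apply $\chi$ to the defining sum $\nr_\alpha = \sum_{\des(I)=\set(\alpha)} \xx_{i_1}\cdots\xx_{i_n}$, obtaining $\sum_{\des(I)=\set(\alpha)} x_{i_1}\cdots x_{i_n}$, and observe that this is exactly the monomial expansion of the ribbon (skew Schur) function $r_\alpha$: each sequence $I$ with $\des(I)=\set(\alpha)$ corresponds to a ribbon tableau of shape $\alpha$ whose entries, read along the ribbon, give the content monomial.

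The main obstacle, though a mild one, is purely bookkeeping: one must be careful that $\chi$ genuinely preserves coefficients when passing from noncommuting to commuting monomials, i.e.\ that distinct noncommutative words collapsing to the same commutative monomial have their coefficients added rather than lost. For $\nel_n$, $\nh_n$, and $\nr_\alpha$ the defining sums each have coefficient $1$ on every word, and the index sets consist of distinct words, so no cancellation or unexpected collapse occurs beyond the harmless reordering in the $\nel_n$ case. I would therefore make one clean remark that $\chi$ is additive and multiplicative on monomials and acts coefficientwise, after which all three equalities follow by directly matching index sets, with the $\nel_n$ case requiring the single order-reversing bijection noted above.
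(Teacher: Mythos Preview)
Your proposal is correct and matches the paper's approach: the paper offers no proof beyond the remark ``It is easy to see the following from the definitions of $\nh_n$, $\nel_n$, and $\nr_\alpha$,'' and your argument simply spells out that verification. In particular, you correctly observe that the third identity is essentially definitional in this paper, since $r_\alpha$ is introduced as $\chi(\nr_\alpha)$ just before the theorem.
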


\begin{theorem}[\cite{non}, section 4.4 and 4.7]\label{thm:ehtor}
\begin{align}
&\boldsymbol{h}_\alpha = \sum_{\beta \succeq \alpha} \boldsymbol{r}_\beta  &&\boldsymbol{r}_\alpha = \sum_{\beta \succeq \alpha} (-1)^{\ell(\alpha) - \ell(\beta)}\boldsymbol{h}_\beta \label{thm:hrCOB}
\\[1em]&\boldsymbol{e}_\alpha = \sum_{\beta^c \succeq \alpha} \boldsymbol{r}_\beta  &&\boldsymbol{r}_\alpha = \sum_{\beta \succeq \alpha^c} (-1)^{\ell(\alpha^c) - \ell(\beta)}\boldsymbol{e}_\beta \label{thm:erCOB}
\end{align}
\end{theorem}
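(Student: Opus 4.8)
The plan is to prove the two left-hand ``expansion'' identities directly, by multiplying out the defining sums of $\nh_\alpha$ and $\nel_\alpha$ in the noncommuting variables and collecting monomials by descent set, and then to deduce the two right-hand ``inverse'' identities by M\"obius inversion on the Boolean lattice of subsets of $[n-1]$.

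For $\nh_\alpha = \nh_{\alpha_1}\cdots\nh_{\alpha_{\ell(\alpha)}}$ I would expand the product as a single sum over sequences $I=(i_1,\dots,i_n)$. Since the $k$-th factor contributes a weakly increasing run on the $k$-th block of positions cut out by $\alpha$, the monomial $\xx_{i_1}\cdots\xx_{i_n}$ occurs (exactly once) precisely when $I$ is weakly increasing within each block, that is, iff $\des(I)\subseteq\set(\alpha)$. Grouping these monomials by their descent set $S$, and recalling that the terms with $\des(I)=S$ sum to $\nr_{\set^{-1}(S)}$, gives $\nh_\alpha=\sum_{S\subseteq\set(\alpha)}\nr_{\set^{-1}(S)}=\sum_{\beta\succeq\alpha}\nr_\beta$, using the translation $\set(\beta)\subseteq\set(\alpha)\iff\beta\succeq\alpha$. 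The computation for $\nel_\alpha$ is the mirror image: each factor $\nel_{\alpha_k}$ contributes a strictly decreasing run, so every within-block position is a forced descent and $\xx_{i_1}\cdots\xx_{i_n}$ appears iff $\des(I)\supseteq[n-1]\setminus\set(\alpha)$. Collecting by descent set yields $\nel_\alpha=\sum_{\set(\beta)\supseteq\set(\alpha)^c}\nr_\beta$, and since $\set(\beta)\supseteq\set(\alpha)^c\iff\set(\beta^c)\subseteq\set(\alpha)\iff\beta^c\succeq\alpha$, this is exactly $\sum_{\beta^c\succeq\alpha}\nr_\beta$.

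For the inverses I would apply the M\"obius inversion recalled in Section 2, using only that $\nsym$ is an abelian group under addition (the noncommutativity of the product plays no role). Writing $f(S)=\nr_{\set^{-1}(S)}$ and $g(T)=\nh_{\set^{-1}(T)}$, the first identity reads $g(T)=\sum_{S\subseteq T}f(S)$, so inversion gives $f(T)=\sum_{S\subseteq T}(-1)^{|T|-|S|}g(S)$; putting $T=\set(\alpha)$ and using $|\set(\gamma)|=\ell(\gamma)-1$ turns the sign into $(-1)^{\ell(\alpha)-\ell(\beta)}$ and produces the stated $\nr_\alpha$-to-$\nh_\beta$ formula. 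The $\nel$ case is the same after complementing: the superset relation $\nel_\alpha=\sum_{\set(\beta)\supseteq\set(\alpha)^c}\nr_\beta$ becomes a subset sum under $S\mapsto S^c$, inversion reverses it, and the surviving condition $\set(\beta^c)\supseteq\set(\alpha)$ translates to $\beta\succeq\alpha^c$. The one point needing care is the sign: the inversion naturally produces exponent $\ell(\beta^c)-\ell(\alpha)$, which must be reconciled with the claimed $\ell(\alpha^c)-\ell(\beta)$; the identity $\ell(\gamma)+\ell(\gamma^c)-1=|\gamma|$ from Section 2 shows both equal $|\alpha|+1-\ell(\alpha)-\ell(\beta)$, so they have the same parity.

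The main obstacle is pinning down the descent characterizations and carrying the complementation bookkeeping through consistently: distinguishing the subset condition $\des(I)\subseteq\set(\alpha)$ for $\nh_\alpha$ from the superset condition $\des(I)\supseteq\set(\alpha)^c$ for $\nel_\alpha$, and tracking the involution $\beta\mapsto\beta^c$ correctly across the inversion and into the sign. Once these are in place the rest is routine. (As a consistency check, one could instead derive the $\nel$ pair from the $\nh$ pair by applying the anti-automorphism $\boldsymbol{\omega}$, since $\boldsymbol{\omega}(\nh_n)=\nel_n$, but the direct expansion above is more self-contained.)
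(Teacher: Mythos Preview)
Your proposal is correct and follows essentially the same approach as the paper: characterize the monomial coefficients in $\nh_\alpha$ and $\nel_\alpha$ by the descent-set conditions $\des(I)\subseteq\set(\alpha)$ and $\des(I)\supseteq\set(\alpha)^c$, respectively, then invoke M\"obius inversion on the Boolean lattice for the inverse formulas. The paper's proof is simply a terser version of yours, stating the indicator-function identities and the inversion without spelling out the intermediate bookkeeping on sets and signs that you (correctly) work through.
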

\begin{proof} For $\alpha \modelsstrong n$, and any $I=(i_1,i_2,\dots,i_n) \in \mathbb{Z}^n$,
$$\left. \boldsymbol{h}_{\alpha}\right|_{\xx_{i_1}\xx_{i_2}\cdots \xx_{i_n}}=\mathbbm{1}_{\set(\alpha)\supseteq \des(I)},$$ giving the left-hand side of (\ref{thm:hrCOB}).  Similarly,    $$\left. \boldsymbol{e}_{\alpha}\right|_{\xx_{i_1}\xx_{i_2}\cdots \xx_{i_n}}=\mathbbm{1}_{\set(\alpha)^c\subseteq \des(I)},$$ giving the left-hand side of (\ref{thm:erCOB}).  Then, as observed in \cite{non} the equations on the right are an application of M\"{o}bius Inversion, applied to the Boolean algebra.
\end{proof}
\begin{corollary}[\cite{non}, p.\ 16]\label{cor:free}
    Both $\{\nel_\alpha\}_{\alpha\modelsstrong n}$ and $\{\nh_\alpha\}_{ \alpha \modelsstrong n}$ are bases of $\nsym^n$.  In particular, $\nsym$ is generated freely by $\{\nel_n\}_{n\in \mathbb{N}}$ or $\{\nh_n\}_{n \in \mathbb{N}}$. 
\end{corollary}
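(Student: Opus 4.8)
The plan is to deduce the corollary directly from Theorem~\ref{thm:ehtor} together with the already-established fact that $\{\nr_\alpha\}_{\alpha \modelsstrong n}$ is a basis of $\nsym^n$. The essential observation is that the refinement order $\preceq$ is a partial order on the (finite) set of strong compositions of $n$, and the change-of-basis formulas in Theorem~\ref{thm:ehtor} are unitriangular with respect to this order. Concretely, the first equation of~(\ref{thm:hrCOB}) expresses $\nh_\alpha$ as $\nr_\alpha$ plus a sum of $\nr_\beta$ over $\beta \succ \alpha$, so the transition matrix from $\{\nh_\alpha\}$ to $\{\nr_\beta\}$ is triangular with $1$'s on the diagonal once we order the compositions by a linear extension of $\preceq$ (for instance, by increasing length). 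Since this matrix is unitriangular it is invertible over any field containing $\mathbb{Q}$, and therefore $\{\nh_\alpha\}_{\alpha \modelsstrong n}$ is itself a basis of $\nsym^n$. The same argument applied to~(\ref{thm:erCOB}), using the complementation involution $\alpha \mapsto \alpha^c$ (which is a bijection on compositions of $n$ and reverses the relevant order), shows that $\{\nel_\alpha\}_{\alpha \modelsstrong n}$ is a basis as well; indeed the right-hand formulas in Theorem~\ref{thm:ehtor} already exhibit the explicit inverse transitions, so invertibility is witnessed directly.

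First I would fix $n$ and record that both $\{\nh_\alpha\}$ and $\{\nel_\alpha\}$ are indexed by the same finite set as $\{\nr_\alpha\}$, namely $\{\alpha \modelsstrong n\}$, so it suffices to check that each spans (equivalently, is linearly independent). Second I would invoke Theorem~\ref{thm:ehtor}: the pair of mutually inverse formulas in~(\ref{thm:hrCOB}) shows that $\nh$ and $\nr$ span the same subspace, and since $\{\nr_\alpha\}$ is a basis, so is $\{\nh_\alpha\}$; the analogous pair in~(\ref{thm:erCOB}) handles $\{\nel_\alpha\}$. The presence of both directions in the theorem statement is what makes this immediate — no separate invertibility computation is even needed, as each basis is written explicitly in terms of the other.

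For the ``freely generated'' assertion, I would argue as follows. The multiplicative definition $\nh_\alpha = \nh_{\alpha_1}\cdots \nh_{\alpha_{\ell(\alpha)}}$ means that the products of the generators $\{\nh_n\}_{n \in \mathbb{N}}$, taken over all words (compositions), form a basis of $\nsym$ graded by $n$. A graded algebra whose monomials in a generating set $\{\nh_n\}$ are linearly independent across all compositions is by definition free on those generators: any relation among the $\nh_n$ would produce a nontrivial linear dependence among the $\nh_\alpha$ in some degree, contradicting the basis property just established. The identical reasoning gives freeness on $\{\nel_n\}$.

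The step I expect to require the most care is the passage from ``$\{\nh_\alpha\}_{\alpha \modelsstrong n}$ is a basis in each degree $n$'' to ``$\nsym$ is \emph{freely} generated by $\{\nh_n\}$.'' Being a basis only asserts linear independence of the products indexed by compositions, whereas freeness is the stronger statement that there are no algebraic relations among the generators whatsoever. The bridge is precisely that the set of compositions of $n$ is in bijection with the length-graded words in the generators whose weights sum to $n$, so linear independence of $\{\nh_\alpha\}$ in every degree is exactly the statement that these words are linearly independent in $\nsym$ — which is the defining property of the free associative algebra on $\{\nh_n\}$. I would make this bijection explicit to avoid conflating the additive (basis) and multiplicative (free) claims, since this is the only place where the noncommutativity of $\nsym$ genuinely enters the argument.
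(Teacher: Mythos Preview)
Your proposal is correct and matches the paper's reasoning: the corollary is stated there without proof, as it is immediate from the mutually inverse formulas of Theorem~\ref{thm:ehtor} together with the fact that $\{\nr_\alpha\}_{\alpha\modelsstrong n}$ is a basis. Your elaboration of the freeness step (that linear independence of the $\nh_\alpha$ in every degree is precisely the absence of relations among the $\nh_n$) is exactly the intended bridge, and your observation that the explicit inverse formulas make even the triangularity argument unnecessary is on point.
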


\noindent Thus Definition \ref{def:Nsymbyalph} above, which gives $\nsym$ as a subspace of $\mathbb{C}[[\xx_1,\xx_2,\cdots]]$, defines a space which is isomorphic to the more abstractly defined space of Gelfand et al.\ as described in Definition \ref{def:Nsymbyformal}, justifying the use of the same name.  Moreover, 
\begin{corollary}[\cite{non}, p.\ 15 and p.\ 19] For any strong compositions $\alpha$ and $\beta$,
   
\begin{eqnarray} \label{NSyminvos}
\boldsymbol{\rho}(\boldsymbol{h}_\alpha) = \boldsymbol{h}_{\alpha^r}, \hspace{1cm} \boldsymbol{\psi}(\boldsymbol{h}_\alpha) = \boldsymbol{e}_\alpha, \hspace{1cm} \boldsymbol{\omega}(\boldsymbol{h}_\alpha) = \boldsymbol{e}_{\alpha^r},
\end{eqnarray}
and 
\begin{align} \label{QNinnerproduct2}
\langle M_\alpha, \boldsymbol{h}_\beta \rangle = \mathbbm{1}_{\alpha = \beta}.
\end{align}
\end{corollary}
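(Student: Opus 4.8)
The plan is to reduce every identity in the corollary to the ribbon data already established in Theorem \ref{thm:ehtor}, namely $\nh_\alpha = \sum_{\beta\succeq\alpha}\nr_\beta$ and $\nel_\alpha = \sum_{\beta^c\succeq\alpha}\nr_\beta$, together with the defining action of the three maps on the ribbon basis in (\ref{NSymrho})--(\ref{NSymomega}) and the known duality $\langle F_\alpha,\nr_\beta\rangle=\mathbbm{1}_{\alpha=\beta}$ from (\ref{QNinnerproduct}). Since $\boldsymbol{\rho}$, $\boldsymbol{\psi}$, and $\boldsymbol{\omega}$ are all linear, I would apply them termwise to these ribbon expansions and then re-index the resulting sums using the involutions $\alpha\mapsto\alpha^r$ and $\alpha\mapsto\alpha^c$ on strong compositions. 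The only combinatorial inputs needed are that reversal preserves the refinement order ($\beta\succeq\alpha \iff \beta^r\succeq\alpha^r$, since the reflection $s\mapsto n-s$ is an inclusion-preserving bijection of $[n-1]$ carrying $\set(\alpha)$ to $\set(\alpha^r)$) and that complementation is an involution with $\alpha^t=(\alpha^r)^c$.

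For $\boldsymbol{\rho}$, applying it to $\nh_\alpha=\sum_{\beta\succeq\alpha}\nr_\beta$ gives $\sum_{\beta\succeq\alpha}\nr_{\beta^r}$; re-indexing by $\gamma=\beta^r$ and using that reversal preserves refinement turns the index set into $\{\gamma\succeq\alpha^r\}$, so the sum is exactly $\nh_{\alpha^r}$. For $\boldsymbol{\psi}$ I obtain $\sum_{\beta\succeq\alpha}\nr_{\beta^c}$; substituting $\gamma=\beta^c$ (so $\beta=\gamma^c$) rewrites the condition $\beta\succeq\alpha$ as $\gamma^c\succeq\alpha$, which is precisely the index set of $\nel_\alpha$ in (\ref{thm:erCOB}), giving $\boldsymbol{\psi}(\nh_\alpha)=\nel_\alpha$. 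Finally, comparing the ribbon actions shows $\boldsymbol{\omega}=\boldsymbol{\psi}\circ\boldsymbol{\rho}$ as linear maps (both send $\nr_\alpha$ to $\nr_{(\alpha^r)^c}=\nr_{\alpha^t}$), so $\boldsymbol{\omega}(\nh_\alpha)=\boldsymbol{\psi}(\nh_{\alpha^r})=\nel_{\alpha^r}$, completing (\ref{NSyminvos}).

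For the pairing (\ref{QNinnerproduct2}), I would first record the expansion $F_\alpha=\sum_{\set(\beta)\supseteq\set(\alpha)}M_\beta$, which is immediate from the definitions by grouping the monomials of $F_\alpha$ according to the positions at which their weakly increasing index sequence strictly increases (these positions always include $\set(\alpha)$ and recover the underlying strong composition). Möbius inversion over supersets in the Boolean lattice of subsets of $[n-1]$ then yields $M_\alpha=\sum_{\set(\gamma)\supseteq\set(\alpha)}(-1)^{|\set(\gamma)|-|\set(\alpha)|}F_\gamma$. Pairing this against $\nh_\beta=\sum_{\set(\delta)\subseteq\set(\beta)}\nr_\delta$ and using bilinearity with $\langle F_\gamma,\nr_\delta\rangle=\mathbbm{1}_{\gamma=\delta}$ collapses the double sum to
\[
\langle M_\alpha,\nh_\beta\rangle=\sum_{\set(\alpha)\subseteq\set(\gamma)\subseteq\set(\beta)}(-1)^{|\set(\gamma)|-|\set(\alpha)|}.
\]
If $\set(\alpha)\not\subseteq\set(\beta)$ this empty sum is $0$; otherwise it equals $\sum_{j=0}^{k}\binom{k}{j}(-1)^{j}=(1-1)^{k}$ with $k=|\set(\beta)|-|\set(\alpha)|$, which is $0$ for $k\geq 1$ and $1$ for $k=0$. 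Since $\alpha$ and $\beta$ are compositions of the same $n$, the case $k=0$ with $\set(\alpha)\subseteq\set(\beta)$ forces $\set(\alpha)=\set(\beta)$, hence $\alpha=\beta$, giving $\langle M_\alpha,\nh_\beta\rangle=\mathbbm{1}_{\alpha=\beta}$.

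I expect the only real friction to be bookkeeping with the two competing orientations of refinement and the reflection and complement maps: one must verify carefully that reversal preserves $\succeq$ while complementation reverses it, and keep straight that $\nel_\alpha$ is indexed by $\gamma^c\succeq\alpha$ rather than $\gamma\succeq\alpha$. None of the steps is deep; the substance lies entirely in matching index sets across the involutions and in the single elementary alternating-sum (Möbius) identity, all of which are supplied by facts collected earlier in the paper.
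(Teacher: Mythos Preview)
Your proposal is correct and matches the paper's implicit approach: the paper states this as a corollary without a written proof, precisely because it follows by expanding $\nh_\alpha$ and $\nel_\alpha$ in the ribbon basis via Theorem \ref{thm:ehtor}, applying the defining action of the involutions on $\nr_\beta$, and re-indexing---exactly as you do. Your derivation of the pairing via $F_\alpha=\sum_{\gamma\preceq\alpha}M_\gamma$ and M\"obius inversion is the natural way to unpack the corollary, and the bookkeeping concerns you flag (reversal preserves $\succeq$, complement reverses it) are handled correctly.
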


\begin{theorem}[\cite{non}, Section 4.1]\label{thm:heCOB} For $n\geq 1$, and any $\beta \modelsstrong n$,
    $$\nh_\beta=\sum_{\alpha\preceq\beta }(-1)^{(\ell(\alpha)-|\beta|)}\nel_\alpha.$$
\end{theorem}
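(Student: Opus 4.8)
The plan is to obtain the $\nh$-to-$\nel$ transition by composing the two change-of-basis formulas already established in Theorem \ref{thm:ehtor}: first expand $\nh_\beta$ in the ribbon basis, then re-expand each ribbon in the $\nel$ basis, and finally collapse the resulting double sum. Concretely, I would start from the left equation of (\ref{thm:hrCOB}), namely $\nh_\beta=\sum_{\gamma\succeq\beta}\nr_\gamma$, and substitute the right equation of (\ref{thm:erCOB}), which (after renaming) reads $\nr_\gamma=\sum_{\delta\succeq\gamma^c}(-1)^{\ell(\gamma^c)-\ell(\delta)}\nel_\delta$. This presents $\nh_\beta$ as the double sum $\sum_{\gamma\succeq\beta}\sum_{\delta\succeq\gamma^c}(-1)^{\ell(\gamma^c)-\ell(\delta)}\nel_\delta$, and the whole problem reduces to evaluating, for each fixed $\delta$, the alternating coefficient obtained by summing over the intermediate composition $\gamma$.

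The computation is cleanest in the language of subsets of $[n-1]$. Writing $S=\set(\beta)$, $G=\set(\gamma)$, and $D=\set(\delta)$, the constraint $\gamma\succeq\beta$ becomes $G\subseteq S$, while $\delta\succeq\gamma^c$ becomes $D\subseteq[n-1]\setminus G$, i.e.\ $G\cap D=\emptyset$. Using $|\set(\cdot)|=\ell(\cdot)-1$ together with the identity $\ell(\alpha)+\ell(\alpha^c)-1=|\alpha|$ one finds $\ell(\gamma^c)=n-|G|$, so the sign exponent is $n-1-|G|-|D|$. Fixing $D$, the coefficient of $\nel_\delta$ is $(-1)^{n-1-|D|}\sum_{G\subseteq S\setminus D}(-1)^{|G|}$, and the inner sum is exactly the basic Möbius identity $\sum_{G\subseteq T}(-1)^{|G|}=\mathbbm{1}_{T=\emptyset}$ recorded above. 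Hence the coefficient vanishes unless $S\setminus D=\emptyset$, i.e.\ unless $S\subseteq D$, and $S\subseteq D$ is precisely the condition $\delta\preceq\beta$. On the surviving terms only $G=\emptyset$ contributes; substituting $|D|=\ell(\delta)-1$ turns the sign into $(-1)^{n-\ell(\delta)}=(-1)^{\ell(\delta)-|\beta|}$, which after renaming $\delta\to\alpha$ is exactly the claimed formula.

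I expect the main obstacle to be purely bookkeeping: keeping the direction of refinement straight (the excerpt's convention is $\beta\preceq\alpha\iff\set(\beta)\supseteq\set(\alpha)$), correctly translating the complement $\gamma\mapsto\gamma^c$ into $G\mapsto[n-1]\setminus G$, and tracking parity through the collapse. The conceptual content is carried entirely by the single cancellation $\sum_{G\subseteq T}(-1)^{|G|}=\mathbbm{1}_{T=\emptyset}$ with $T=S\setminus D$, which kills every choice of intermediate $\gamma$ unless $S\subseteq D$, in which case only $\gamma$ with $\set(\gamma)=\emptyset$ remains.

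As an independent check (and an alternative proof), one can first establish the single-part case $\nh_n=\sum_{\alpha\models n}(-1)^{\ell(\alpha)-n}\nel_\alpha$ --- either by specializing the argument above to $\beta=(n)$ or from the fundamental relation between the generating series of $\nel$ and $\nh$ --- and then recover the general statement multiplicatively. Since $\nh_\beta=\nh_{\beta_1}\cdots\nh_{\beta_{\ell(\beta)}}$ and every refinement $\alpha\preceq\beta$ factors uniquely as a concatenation $\alpha^{(1)}\cdots\alpha^{(\ell(\beta))}$ with $\alpha^{(i)}\models\beta_i$, expanding the product yields $\nel_{\alpha^{(1)}}\cdots\nel_{\alpha^{(\ell(\beta))}}=\nel_\alpha$ with total sign $(-1)^{\sum_i(\ell(\alpha^{(i)})-\beta_i)}=(-1)^{\ell(\alpha)-|\beta|}$, recovering the formula.
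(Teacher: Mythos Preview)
Your proof is correct and takes essentially the same route as the paper: both arguments pass through the ribbon basis via Theorem~\ref{thm:ehtor}, translate the refinement constraints into subset containments, and collapse the resulting double sum using the Boolean M\"obius identity $\sum_{G\subseteq T}(-1)^{|G|}=\mathbbm{1}_{T=\emptyset}$. The only cosmetic difference is direction---the paper starts from the $\nel$-side, expands $\nel_{\alpha^c}$ in ribbons via the left equation of~(\ref{thm:erCOB}), and arrives at $\nh_\beta$, whereas you start from $\nh_\beta$, expand each ribbon in the $\nel$ basis via the right equation of~(\ref{thm:erCOB}), and arrive at the $\nel$-sum; the M\"obius cancellation is the same in both.
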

\begin{proof}
\begin{align*}
\sum_{\alpha\preceq\beta}(-1)^{(\ell(\alpha)-|\beta|)}\nel_\alpha&=\sum_{\alpha\succeq\beta^c}(-1)^{(\ell(\alpha)-1)}\nel_{\alpha^c}&&\alpha\rightarrow \alpha^c\\
    &=\sum_{\alpha\succeq \beta^c}(-1)^{(\ell(\alpha)-1)}\sum_{\gamma\preceq\alpha}r_\gamma&&\text{by (\ref{thm:erCOB})}\\
    &=\sum_{\gamma\modelsstrong|\beta|}\nr_\gamma\sum_{\substack{\alpha\succeq\gamma\\\alpha\succeq \beta^c}}(-1)^{(\ell(\alpha)-1)}\\
    &=\sum_{\gamma\modelsstrong|\beta|}\nr_\gamma\sum_{\set(\alpha)\subseteq\set(\gamma)\cap\set(\beta)^c}(-1)^{|\set(\alpha))|}\\
    &=\sum_{\gamma \succeq\beta}\nr_\gamma&&\text{by M\"{o}bius function properties}\\
    &=    \nh_\beta. &&\text{by (\ref{thm:hrCOB})}
\end{align*}
The second-to-last equality follows from properties of the M\"{o}bius function on the Boolean algebra, since the sum will be nonzero unless $\set(\gamma)\cap\set(\beta)^c=\emptyset.$
\end{proof}

\noindent As in $\sym$, the generating series of the noncommutative elementary and complete homogeneous symmetric functions are particularly nice:
\begin{eqnarray} \label{NSymgenfunc}
\textbf{E}(t) = \sum_{n \in \mathbb{N}} \boldsymbol{e}_nt^n=\prod_{i\geq 1}^\leftarrow (1+\xx_it) \hspace{0.3cm} \text{and} \hspace{0.3cm} \textbf{H}(t) = \sum_{n \in \mathbb{N}} \boldsymbol{h}_nt^n=\prod_{i\geq 1}^\rightarrow \frac{1}{(1-\xx_it)}.
\end{eqnarray}
Here, we must take $t$ to be a formal variable which commutes with $\xx_i$ for all $i$. It is easy to see that, as in the commutative case, \begin{align} \label{eq:HE}\textbf{E}(-t)\textbf{H}(t) = 1=\textbf{H}(t)\textbf{E}(-t).\end{align} This is taken as the defining relation for the noncommutative homogeneous basis in \cite{non}.  As observed there, with this relation it is immediate from comparing the coefficients of $t^n$ in (\ref{eq:HE}) that
\begin{theorem}[\cite{non}, Proposition 3.3] For any positive integer $n$,\begin{eqnarray} \label{NSymeandh}
\sum_{i=0}^n (-1)^{n-i} \boldsymbol{e}_i\boldsymbol{h}_{n-i} = 0=\sum_{i=0}^n (-1)^{n-i} \boldsymbol{h}_i\boldsymbol{e}_{n-i}.
\end{eqnarray}
\end{theorem}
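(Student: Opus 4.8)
The plan is to derive both identities by extracting the coefficient of $t^n$ from the generating-function identity (\ref{eq:HE}), exactly as the surrounding discussion indicates. Since $t$ is a formal variable commuting with every $\xx_i$, the products $\textbf{E}(-t)\textbf{H}(t)$ and $\textbf{H}(t)\textbf{E}(-t)$ are honest power series in $t$ with coefficients in $\nsym$, and the coefficient of $t^n$ in a product of two such series is the usual finite convolution of their coefficients, with the noncommutative order inherited from the two factors. This is the one point that deserves an explicit remark, precisely because the coefficient ring is noncommutative.

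First I would record the expansions $\textbf{E}(-t) = \sum_{i \geq 0} (-1)^i \boldsymbol{e}_i t^i$ and $\textbf{H}(t) = \sum_{j \geq 0} \boldsymbol{h}_j t^j$. Multiplying in this order and collecting the $t^n$-term gives
$$\bigl[\textbf{E}(-t)\textbf{H}(t)\bigr]\big|_{t^n} = \sum_{i=0}^n (-1)^i \boldsymbol{e}_i \boldsymbol{h}_{n-i},$$
where $\boldsymbol{e}_i$ is kept to the left of $\boldsymbol{h}_{n-i}$, as the factorization demands. Because $\textbf{E}(-t)\textbf{H}(t) = 1$ by (\ref{eq:HE}) and the constant series $1$ has coefficient $0$ on $t^n$ for every $n \geq 1$, this sum vanishes. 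Multiplying through by the scalar $(-1)^n$ and using $(-1)^{n+i} = (-1)^{n-i}$ rewrites it as $\sum_{i=0}^n (-1)^{n-i} \boldsymbol{e}_i \boldsymbol{h}_{n-i} = 0$, the first asserted identity.

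The second identity comes out identically from the other factorization $\textbf{H}(t)\textbf{E}(-t) = 1$ in (\ref{eq:HE}): its $t^n$-coefficient is $\sum_{i=0}^n \boldsymbol{h}_i (-1)^{n-i}\boldsymbol{e}_{n-i} = \sum_{i=0}^n (-1)^{n-i}\boldsymbol{h}_i\boldsymbol{e}_{n-i}$ (the sign is a central scalar), which again equals the $t^n$-coefficient of $1$, namely $0$, for $n \geq 1$.

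There is no genuine obstacle once (\ref{eq:HE}) is in hand; the only care needed is the bookkeeping just described, namely preserving the noncommutative order in the convolution and reconciling the sign $(-1)^i$ produced naturally by $\textbf{E}(-t)$ with the sign $(-1)^{n-i}$ of the statement via the harmless factor $(-1)^n$. If one wanted a fully self-contained argument, (\ref{eq:HE}) itself follows by telescoping the product formulas (\ref{NSymgenfunc}): writing $\textbf{E}(-t) = \prod_{i\geq 1}^{\leftarrow}(1 - \xx_i t)$ with $(1-\xx_1 t)$ innermost and $\textbf{H}(t) = \prod_{i \geq 1}^{\rightarrow}\tfrac{1}{1 - \xx_i t}$ with $\tfrac{1}{1-\xx_1 t}$ leftmost, the adjacent factors $(1 - \xx_1 t)$ and $\tfrac{1}{1-\xx_1 t}$ cancel, after which $(1-\xx_2 t)$ and $\tfrac{1}{1-\xx_2 t}$ become adjacent and cancel, and so on, leaving $1$.
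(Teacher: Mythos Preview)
Your argument is correct and is exactly the approach the paper takes: the paper's ``proof'' consists of the single remark that the identities are immediate from comparing coefficients of $t^n$ in (\ref{eq:HE}), and you have simply spelled out that coefficient extraction in full, including the sign bookkeeping and an optional justification of (\ref{eq:HE}) itself.
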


\noindent Last, we turn our attention to the noncommutative power sums.  The reader expecting a similarly simple definition of $\np_n$, analogous to that of $\nel_n$ and $\nh_n$, will be disappointed.
In particular, as mentioned above for $k>1$, $$\sum_{i\geq 1}\xx_i^k\notin \nsym.$$
Thus, if one wishes to define $\np_n\in \nsym$ such that $\chi(\np_n)=p_n$, it must be that $\np_n$ has both positive and negative terms, when written as a sum of monomials. There is not a unique such $\np_n$ in $\nsym$; Gelfand et al.\ in \cite{non} define two noncommutative analogues of the power sums, $\nphi_n$ and $\npsi_n$.  While they originally define these bases based on their relation to the noncommutative complete homogeneous basis, we begin with their expansion in the noncommutative ribbon basis, since this allows us to easily read off their definition in terms of monomials.

\begin{definition}\label{def:phipsi_n}  Let $\npsi_0=1$, $\nphi_0=1$, and for $n\geq 1$, let
    \begin{align}\npsi_n
    &=\sum_{I=(i_1,\dots,i_n)\in A_n} (-1)^{k(I)-1}\xx_{i_1}\xx_{i_2}\cdots \xx_{i_n}\\&=\sum\limits_{k=0}^{n-1} (-1)^k \nr_{1^k(n-k)}\end{align}
    and \begin{align}\nphi_n
    &=\sum\limits_{I=(i_1,i_2,\dots,i_n)\in (\mathbb{Z}^+)^n} \frac{(-1)^{|\des(I)|}}{\binom{n-1}{|\des(I)|}} \xx_{i_1}\xx_{i_2}\cdots \xx_{i_n}\\&= \sum\limits_{\alpha\modelsstrong n} \frac{(-1)^{\ell(\alpha)-1}}{\binom{n-1}{\ell(\alpha)-1}} \nr_\alpha, \end{align}
  where \begin{align*}A_n=&\left\{\left.({i_1},\dots, {i_n})\in  (\mathbb{Z}^+)^n ~\right|~\exists k \text{ s.t.\ }\right.\\&\hspace{2cm}\left. 1 \leq k\leq n \text{ and }i_1>i_2>\cdots>i_{k-1}>i_{k}\leq i_{k+1}\leq \cdots \le i_n\right\},\end{align*} and where $k(I)$ is the unique $k$ satisfying the condition required for $I=({i_1},\dots,{i_n}) \in A_n$.

For any strong composition $\alpha=(\alpha_1,\alpha_2,\dots)$,
$\boldsymbol{\psi}_\alpha = \boldsymbol{\psi}_{\alpha_1}\boldsymbol{\psi}_{\alpha_2} \cdots$ and $\boldsymbol{\phi}_\alpha = \boldsymbol{\phi}_{\alpha_1}\boldsymbol{\phi}_{\alpha_2} \cdots$. Then $\{\boldsymbol{\psi}_{\alpha}\}_{ \alpha \modelsstrong n}$ and $\{\boldsymbol{\phi}_\alpha\}_{ \alpha \modelsstrong n}$ are the \textbf{noncommutative power sums of the 1st} and \textbf{2nd kinds} (respectively).

\end{definition}
\begin{example}
   \begin{align*}\psi_3&=\xx_{1}^{3} + \xx_{1}^{2}\xx_{2} + \xx_{1}^{2}\xx_{3} + \xx_{1}\xx_{2}^{2} + \xx_{1}\xx_{2}\xx_{3} + \xx_{1}\xx_{3}^{2} - \xx_{2}\xx_{1}^{2} - \xx_{2}\xx_{1}\xx_{2}\\&\hspace{1cm}  - \xx_{2}\xx_{1}\xx_{3}+ \xx_{2}^{3} + \xx_{2}^{2}\xx_{3} + \xx_{2}\xx_{3}^{2} - \xx_{3}\xx_{1}^{2} - \xx_{3}\xx_{1}\xx_{2} - \xx_{3}\xx_{1}\xx_{3} \\&\hspace{1.5cm}+ \xx_{3}\xx_{2}\xx_{1}  - \xx_{3}\xx_{2}^{2}- \xx_{3}\xx_{2}\xx_{3} + \xx_{3}^{3}+\cdots\end{align*}
    and
    \begin{align*}\phi_3&=\xx_{1}^{3} + \xx_{1}^{2}\xx_{2} + \xx_{1}^{2}\xx_{3} - \frac{1}{2}\xx_{1}\xx_{2}\xx_{1} + \xx_{1}\xx_{2}^{2} + \xx_{1}\xx_{2}\xx_{3} - \frac{1}{2}\xx_{1}\xx_{3}\xx_{1} - \frac{1}{2}\xx_{1}\xx_{3}\xx_{2} + \xx_{1}\xx_{3}^{2} \\&\hspace{1cm}- \frac{1}{2}\xx_{2}\xx_{1}^{2} - \frac{1}{2}\xx_{2}\xx_{1}\xx_{2} - \frac{1}{2}\xx_{2}\xx_{1}\xx_{3} - \frac{1}{2}\xx_{2}^{2}\xx_{1} + \xx_{2}^{3} + \xx_{2}^{2}\xx_{3} - \frac{1}{2}\xx_{2}\xx_{3}\xx_{1} \\&\hspace{1cm} - \frac{1}{2}\xx_{2}\xx_{3}\xx_{2} + \xx_{2}\xx_{3}^{2}- \frac{1}{2}\xx_{3}\xx_{1}^{2} - \frac{1}{2}\xx_{3}\xx_{1}\xx_{2} - \frac{1}{2}\xx_{3}\xx_{1}\xx_{3} + \xx_{3}\xx_{2}\xx_{1}  \\ & \hspace{1.5cm}- \frac{1}{2}\xx_{3}\xx_{2}^{2}- \frac{1}{2}\xx_{3}\xx_{2}\xx_{3} - \frac{1}{2}\xx_{3}^{2}\xx_{1}- \frac{1}{2}\xx_{3}^{2}\xx_{2} + \xx_{3}^{3}+\cdots\end{align*}
\end{example}

Several of the places in which symmetric power sums pay a key role do not have analogues with any of the noncommutative power sums.  Their relation to other bases in terms of generating functions is less straightforward;\ see the discussion after the proof of Theorem \ref{thm:hpprod} below.  From an algebraic perspective, one important role of the symmetric power sum basis is as they appear in the definition of the Frobenius character map (as is explained, for example, in \cite{Sagan}), where they encode the characters of representations of the symmetric group in $\sym$.  There is a natural analogue of the Frobenius character map which encodes representations of the 0-Hecke algebra using $\nsym$, as explained in \cite{NSYMIV}.  However, there is no similar character defined on representations of the 0-Hecke algebra, so the map is defined quite differently, and none of the noncommutative power sums appear.  Similarly, the symmetric power sums play an important role in defining or simplifying plethysm in the commuting variables, but are not present in the noncommutative story in the same way as explored in \cite{krob1997noncommutative}.

\noindent Using the standard inner product, Definition \ref{def:phipsi_n} indirectly defines bases of $\textnormal{QSym}$ dual to the two kinds of noncommutative power sum symmetric functions.  These \textbf{quasisymmetric power sums}, $\{\psi_\alpha\}$ and $\{\phi_\alpha\}$, were explored in detail by Ballantine et al.\ in \cite{quasipower}, and satisfy
$$\langle \psi_\alpha,\boldsymbol{\psi}_\beta \rangle = \langle \phi_\alpha,\boldsymbol{\phi}_\beta \rangle = z_\alpha \cdot \mathbbm{1}_{\alpha = \beta}.$$
Here, and elsewhere, for any strong composition $\alpha$, $z_\alpha = z_{\text{sort}(\alpha)}$, the coefficient seen before.

While the images under the remaining involutions are not as straightforward, it is easy to see from the expansions of the noncommutative power sums of the first and second kinds in the noncommutative ribbon basis that we have the following theorem.
\begin{theorem}[\cite{non}, Section 3]\label{thm:inv_on_phi_psi}  For any strong composition $\alpha$,
    \begin{align}\boldsymbol{\omega}(\npsi_\alpha)&=(-1)^{|\alpha|-\ell(\alpha)}\npsi_{\alpha^r};
    \\\boldsymbol{\omega}(\nphi_\alpha)&=(-1)^{|\alpha|-\ell(\alpha)}\nphi_{\alpha^r};
    \\\boldsymbol{\psi}(\nphi_\alpha)&=(-1)^{|\alpha|-\ell(\alpha)}\nphi_\alpha;
    \\\boldsymbol{\rho}(\nphi_\alpha)&=\nphi_{\alpha^r}.\end{align}
\end{theorem}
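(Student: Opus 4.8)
The plan is to work entirely from the ribbon expansions in Definition \ref{def:phipsi_n}, together with the defining relations $\boldsymbol{\omega}(\nr_\alpha)=\nr_{\alpha^t}$, $\boldsymbol{\psi}(\nr_\alpha)=\nr_{\alpha^c}$, and $\boldsymbol{\rho}(\nr_\alpha)=\nr_{\alpha^r}$ from \eqref{NSymrho}–\eqref{NSymomega}. Since $\boldsymbol{\psi}_n=\sum_{k=0}^{n-1}(-1)^k\nr_{(1^k,n-k)}$ and $\boldsymbol{\phi}_n=\sum_{\alpha\modelsstrong n}\frac{(-1)^{\ell(\alpha)-1}}{\binom{n-1}{\ell(\alpha)-1}}\nr_\alpha$, each identity reduces to tracking what the relevant involution does to the indexing compositions appearing in these single-letter power sums, and then invoking multiplicativity (or anti-multiplicativity) to pass from $\boldsymbol{\psi}_n,\boldsymbol{\phi}_n$ up to $\boldsymbol{\psi}_\alpha,\boldsymbol{\phi}_\alpha$.

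First I would establish the single-part cases ($\alpha=(n)$). The key combinatorial computation is to identify $(1^k,n-k)^t$ and $(1^k,n-k)^c$. A direct check using $\set$ shows $(1^k,n-k)^c=(n-k,1^k)$ and hence $(1^k,n-k)^t=((1^k,n-k)^c)^r=(1^k,n-k)$, i.e., $(1^k,n-k)$ is fixed by transpose. Applying $\boldsymbol{\omega}$ termwise to $\boldsymbol{\psi}_n$ thus gives $\boldsymbol{\omega}(\boldsymbol{\psi}_n)=\sum_{k}(-1)^k\nr_{(1^k,n-k)}$; comparing with the sign $(-1)^{n-\ell}$ across the sum (where $\ell=\ell((1^k,n-k))=k+1$ so that $(-1)^{n-\ell}=(-1)^{n-k-1}$) and reindexing $k\mapsto n-1-k$ recovers $(-1)^{n-1}\boldsymbol{\psi}_n=(-1)^{|\alpha|-\ell(\alpha)}\boldsymbol{\psi}_n$ at $\alpha=(n)$, where $\ell((n))=1$. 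For $\boldsymbol{\phi}_n$, the coefficient $\frac{(-1)^{\ell(\alpha)-1}}{\binom{n-1}{\ell(\alpha)-1}}$ depends only on $\ell(\alpha)$, which is preserved by each of $r$, $c$, and $t$; so $\boldsymbol{\omega}(\boldsymbol{\phi}_n)$, $\boldsymbol{\rho}(\boldsymbol{\phi}_n)$, and $\boldsymbol{\psi}(\boldsymbol{\phi}_n)$ all reduce to resumming the same coefficients over the permuted index set, and one reads off the single-part identities, including the extra sign $(-1)^{\ell-1}=(-1)^{n-\ell}$ for $\boldsymbol{\psi}$ after using $\ell(\alpha^c)=n+1-\ell(\alpha)$.

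Then I would lift to general $\alpha=(\alpha_1,\dots,\alpha_{\ell(\alpha)})$ using the (anti)multiplicative behavior noted after Definition \ref{def:phipsi_n}: $\boldsymbol{\psi}$ is an automorphism while $\boldsymbol{\rho}$ and $\boldsymbol{\omega}$ are anti-automorphisms. For $\boldsymbol{\omega}(\boldsymbol{\psi}_\alpha)$, anti-multiplicativity reverses the product, $\boldsymbol{\omega}(\boldsymbol{\psi}_{\alpha_1}\cdots\boldsymbol{\psi}_{\alpha_{\ell}})=\boldsymbol{\omega}(\boldsymbol{\psi}_{\alpha_\ell})\cdots\boldsymbol{\omega}(\boldsymbol{\psi}_{\alpha_1})$, and each factor contributes its sign $(-1)^{\alpha_i-1}$; collecting these gives the global sign $(-1)^{\sum(\alpha_i-1)}=(-1)^{|\alpha|-\ell(\alpha)}$, while the reversed product of the $\boldsymbol{\psi}_{\alpha_i}$ is exactly $\boldsymbol{\psi}_{\alpha^r}$. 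The same bookkeeping handles $\boldsymbol{\omega}(\boldsymbol{\phi}_\alpha)$ and $\boldsymbol{\rho}(\boldsymbol{\phi}_\alpha)$ (the latter with no sign, since $\boldsymbol{\rho}(\boldsymbol{\phi}_n)=\boldsymbol{\phi}_n$), and $\boldsymbol{\psi}(\boldsymbol{\phi}_\alpha)$ follows from straight multiplicativity, multiplying the per-factor signs $(-1)^{\alpha_i-1}$ to the same total $(-1)^{|\alpha|-\ell(\alpha)}$ with the product staying in order.

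The main obstacle I anticipate is purely the combinatorial identity for how $r$, $c$, $t$ act on the compositions $(1^k,n-k)$ and on general $\alpha$, and in particular keeping the sign exponents consistent under reindexing; the parity $|\alpha|-\ell(\alpha)=\sum_i(\alpha_i-1)$ is additive over parts, which is what makes the lift from single parts to arbitrary $\alpha$ go through cleanly once the anti-multiplicativity of $\boldsymbol{\rho}$ and $\boldsymbol{\omega}$ is used to reverse the order correctly. Everything else is routine resummation of the ribbon coefficients, so no deep input beyond the definitions and the involution formulas \eqref{NSymrho}–\eqref{NSymomega} should be required.
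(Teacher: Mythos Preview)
Your approach is exactly what the paper has in mind—it states only that the theorem ``is easy to see from the expansions of the noncommutative power sums of the first and second kinds in the noncommutative ribbon basis,'' and your plan to use Definition~\ref{def:phipsi_n} together with \eqref{NSymrho}--\eqref{NSymomega} and then (anti)multiplicativity is the intended route.

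However, there are concrete errors in your combinatorial computations. The claim $(1^k,n-k)^c=(n-k,1^k)$ is false: since $\set((1^k,n-k))=\{1,2,\dots,k\}$, its complement in $[n-1]$ is $\{k+1,\dots,n-1\}$, giving $(1^k,n-k)^c=(k+1,1^{n-1-k})$, and hence $(1^k,n-k)^t=(1^{n-1-k},k+1)$. So these hook compositions are \emph{not} fixed by transpose. With your wrong claim, the line $\boldsymbol{\omega}(\npsi_n)=\sum_k(-1)^k\nr_{(1^k,n-k)}$ would simply say $\boldsymbol{\omega}(\npsi_n)=\npsi_n$, and no reindexing can then manufacture the required sign $(-1)^{n-1}$. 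The correct computation is $\boldsymbol{\omega}(\npsi_n)=\sum_k(-1)^k\nr_{(1^{n-1-k},k+1)}$, after which the substitution $k\mapsto n-1-k$ genuinely yields $(-1)^{n-1}\npsi_n$. Similarly, your assertion that $\ell(\alpha)$ is preserved by each of $r$, $c$, and $t$ is wrong: only $r$ preserves length. You partially correct this later when you invoke $\ell(\alpha^c)=n+1-\ell(\alpha)$, but the arguments for $\boldsymbol{\omega}(\nphi_n)$ and $\boldsymbol{\psi}(\nphi_n)$ also require the binomial symmetry $\binom{n-1}{\ell-1}=\binom{n-1}{n-\ell}$, which you should make explicit. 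Once these computations are repaired, the remainder of your outline (lifting via (anti)multiplicativity and collecting the per-part signs $(-1)^{\alpha_i-1}$) is correct and matches the paper.
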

\noindent These allow us to focus only on change of basis going forward in the noncommutative complete symmetric function basis, since expansion in the elementary symmetric functions will follow from applying the $\boldsymbol{\omega}$ map to each side.

\begin{theorem}[\cite{non}, Section 4.2] For $n$ a postive integer,
    \begin{align*}\npsi_n&=\sum_{\beta\modelsstrong n}(-1)^{1 + \ell(\beta)}\beta_{\ell(\beta)} \nh_\beta. \end{align*}
\end{theorem}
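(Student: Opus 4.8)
The plan is to expand $\npsi_n$ in the noncommutative complete homogeneous basis by starting from its known ribbon expansion $\npsi_n = \sum_{k=0}^{n-1}(-1)^k \nr_{(1^k,n-k)}$ from Definition \ref{def:phipsi_n}, and then converting each ribbon to the $\nh$ basis using the inverse change-of-basis formula in (\ref{thm:hrCOB}), namely $\nr_\alpha = \sum_{\beta \succeq \alpha}(-1)^{\ell(\alpha)-\ell(\beta)}\nh_\beta$. First I would substitute to obtain a double sum over $k$ and over $\beta \succeq (1^k, n-k)$, then interchange the order of summation so the outer sum ranges over all $\beta \modelsstrong n$ and the inner sum collects the coefficient of $\nh_\beta$.

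The key step is to identify, for a fixed $\beta \modelsstrong n$, exactly which compositions $(1^k, n-k)$ are refined by $\beta$ (i.e.\ which $(1^k,n-k)$ satisfy $\beta \succeq (1^k,n-k)$, equivalently $\set(1^k,n-k) \subseteq \set(\beta)$), and with what sign and multiplicity they contribute. Since $\set(1^k,n-k) = \{1,2,\dots,k\}$, the condition $\beta \succeq (1^k,n-k)$ holds precisely when $\{1,\dots,k\} \subseteq \set(\beta)$, which means $k$ can be any integer from $0$ up to the length of the initial run of consecutive integers $1,2,3,\dots$ appearing at the start of $\set(\beta)$. I expect a telescoping or near-complete cancellation: the signs $(-1)^k$ from the $\npsi_n$ expansion and $(-1)^{\ell(1^k,n-k)-\ell(\beta)} = (-1)^{(k+1)-\ell(\beta)}$ from the ribbon conversion combine, so the inner sum over admissible $k$ should collapse, leaving only the contribution governed by the last part $\beta_{\ell(\beta)}$ and the sign $(-1)^{1+\ell(\beta)}$.

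The main obstacle will be carrying out this cancellation cleanly and showing the surviving coefficient of $\nh_\beta$ equals exactly $(-1)^{1+\ell(\beta)}\beta_{\ell(\beta)}$ rather than some off-by-one variant. I would organize this by writing the coefficient of $\nh_\beta$ as $\sum_{k} (-1)^k (-1)^{(k+1)-\ell(\beta)} = (-1)^{1-\ell(\beta)}\sum_k (-1)^{2k} = (-1)^{1+\ell(\beta)} \cdot (\text{number of admissible }k)$, where the admissible $k$ are those with $\{1,\dots,k\}\subseteq \set(\beta)$. The count of such $k$ is the size of the maximal initial segment $\{1,2,\dots,m\} \subseteq \set(\beta)$ plus one (for $k=0$), and this initial segment has length exactly $n - \beta_{\ell(\beta)} - (\ell(\beta)-1)$ worth of structure; the clean way to see the count is $\beta_{\ell(\beta)}$ is to observe that $\{1,\dots,k\}\subseteq \set(\beta)$ forces $\beta$ to begin with a run of $1$'s, and the largest such $k$ is determined by where the first part of size $\geq 2$ occurs. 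The delicate point deserving care is confirming that the number of valid $k$ equals $\beta_{\ell(\beta)}$ rather than a quantity depending on the first part; I would double-check this against the case $\beta = (n)$ (where $\set(\beta)=\emptyset$, only $k=0$ is admissible, giving coefficient $(-1)^{1+1}\cdot n = n$ since $\beta_{\ell(\beta)}=n$, matching) and against $\beta=(1^n)$ to ensure the indexing is correct before committing to the final identity.
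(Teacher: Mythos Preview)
Your overall strategy---start from the ribbon expansion in Definition~\ref{def:phipsi_n}, convert each $\nr_{(1^k,n-k)}$ to the $\nh$-basis via (\ref{thm:hrCOB}), then swap the order of summation and count the admissible $k$ for each fixed $\beta$---is exactly what the paper does. The sign computation $(-1)^k(-1)^{(k+1)-\ell(\beta)} = (-1)^{1+\ell(\beta)}$ is also correct.

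However, you have the direction of the set containment reversed, and this propagates through the rest of the sketch. Under the paper's convention (Definition~\ref{def:refinement}), $\beta \succeq (1^k,n-k)$ means $(1^k,n-k)$ \emph{refines} $\beta$, which is equivalent to $\set(1^k,n-k) \supseteq \set(\beta)$, not $\subseteq$. Since $\set(1^k,n-k) = \{1,\dots,k\}$, the correct condition is $\set(\beta) \subseteq \{1,\dots,k\}$, i.e.\ $k \geq \max(\set(\beta)) = n - \beta_{\ell(\beta)}$. Hence the admissible $k$ are $n-\beta_{\ell(\beta)}, \dots, n-1$, and there are exactly $\beta_{\ell(\beta)}$ of them. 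This is precisely the paper's step $\mathbbm{1}_{\beta \succeq (1^k,n-k)} = \mathbbm{1}_{\beta_{\ell(\beta)} \geq n-k}$.

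Your own test case $\beta = (n)$ already exposes the error: with your stated condition $\{1,\dots,k\}\subseteq \set(\beta)=\emptyset$, only $k=0$ would be admissible, yielding coefficient $1$, not $n$. With the corrected containment, all $k$ from $0$ to $n-1$ are admissible, giving the required count of $n$. Likewise, your suspicion that the count might depend on the \emph{first} part rather than the last arises entirely from this reversed inclusion; once corrected, the dependence on $\beta_{\ell(\beta)}$ is immediate and no further case analysis is needed.
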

\begin{proof}    \begin{align*}\npsi_n&=\sum\limits_{k=0}^{n-1} (-1)^k \nr_{1^k(n-k)}&&\text{by Definition \ref{def:phipsi_n}}
\\&=\sum\limits_{k=0}^{n-1}  \sum_{\beta \succeq 1^k(n-k)} (-1)^{1 + \ell(\beta)}\boldsymbol{h}_\beta &&\text{by (\ref{thm:hrCOB})}
\\&=\sum_{\beta\modelsstrong n}(-1)^{1 + \ell(\beta)}\nh_\beta \sum_{k=0}^{n-1} \mathbbm{1}_{\beta\succeq(1^k,n-k)}
\\&=\sum_{\beta\modelsstrong n}(-1)^{1 + \ell(\beta)}\nh_\beta \sum_{k=0}^{n-1} \mathbbm{1}_{\beta_{\ell(\beta)}\geq n-k}
\\&=\sum_{\beta\modelsstrong n}(-1)^{1 + \ell(\beta)}\beta_{\ell(\beta)} \nh_\beta.\end{align*}
\end{proof}
\noindent Before giving an analogous result for the $\nphi_n$, we first need the following lemma:
\begin{lemma}\label{lem:helpfulc} Let $n$ and $c$ be nonnegative integers.  Then, $$\sum_{k=0}^n\frac{\binom{n}{k}}{{\binom{n+c}{k+c}}}=\frac{n+c+1}{c+1}.$$
\end{lemma}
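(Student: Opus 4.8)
The plan is to reduce the summand to a single binomial coefficient and then invoke the hockey-stick identity. First I would clear the factorials in the ratio of binomials: since $\binom{n}{k} = \frac{n!}{k!(n-k)!}$ and $\binom{n+c}{k+c} = \frac{(n+c)!}{(k+c)!(n-k)!}$, the factors $(n-k)!$ cancel, and
$$\frac{\binom{n}{k}}{\binom{n+c}{k+c}} = \frac{n!\,(k+c)!}{(n+c)!\,k!} = \frac{c!\,n!}{(n+c)!}\cdot\frac{(k+c)!}{c!\,k!} = \frac{\binom{k+c}{c}}{\binom{n+c}{c}}.$$
The key observation is that the denominator $\binom{n+c}{c}$ does not depend on the summation index $k$, so it factors out of the sum.

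Next I would evaluate $\sum_{k=0}^{n}\binom{k+c}{c}$. Reindexing by $j = k+c$ turns this into $\sum_{j=c}^{n+c}\binom{j}{c}$, which the hockey-stick identity evaluates as $\binom{n+c+1}{c+1}$. If one prefers a self-contained argument, this can instead be proved by a one-line induction on $n$ using Pascal's rule, avoiding any appeal to a named identity.

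Combining the two steps gives
$$\sum_{k=0}^{n}\frac{\binom{n}{k}}{\binom{n+c}{k+c}} = \frac{1}{\binom{n+c}{c}}\binom{n+c+1}{c+1},$$
and the final step is to simplify this ratio. Writing both binomials in factorial form, namely $\binom{n+c}{c}=\frac{(n+c)!}{c!\,n!}$ and $\binom{n+c+1}{c+1}=\frac{(n+c+1)!}{(c+1)!\,n!}$, the $n!$ cancel and what remains is $\frac{(n+c+1)!\,c!}{(n+c)!\,(c+1)!} = \frac{n+c+1}{c+1}$, as desired.

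There is no genuine obstacle here; the only things to watch are the bookkeeping of the factorials so that the cancellations are carried out correctly, and ensuring the reindexing in the hockey-stick step uses the correct lower limit $j=c$ and upper limit $j=n+c$. The identity becomes elementary once the summand is recognized as a scaled single binomial coefficient.
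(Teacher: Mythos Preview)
Your proof is correct, but it proceeds by a genuinely different route than the paper's. The paper argues by induction on $n$ directly on the original sum: it peels off the $k=n+1$ term (which equals $1$), uses the relation $\binom{n+1}{k}\big/\binom{n+1+c}{k+c}=\frac{n+1}{n+1+c}\cdot\binom{n}{k}\big/\binom{n+c}{k+c}$ to factor a constant out of the remaining sum, applies the inductive hypothesis, and simplifies. You instead first rewrite the summand as $\binom{k+c}{c}\big/\binom{n+c}{c}$, factor out the $k$-independent denominator, and then recognize $\sum_{k=0}^n\binom{k+c}{c}=\binom{n+c+1}{c+1}$ via the hockey-stick identity. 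Your approach is arguably more transparent, since it exposes the sum as a scaled version of a classical binomial identity; the paper's induction is self-contained and avoids naming any external identity, but is less illuminating as to \emph{why} the closed form takes that shape.
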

\begin{proof}
    By induction on $n$.  The base case $n=0$ is trivially true.   Then \begin{align*}\sum_{k=0}^{n+1}\frac{\binom{n+1}{ k}}{\binom{{n+1+c}}{ {k+c}}}&=1+\sum_{k=0}^{n}\frac{\left(\frac{n+1}{n+1-k}\right)\binom{n}{ k}}{\left(\frac{n+1+c}{n+1-k}\right)\binom{{n+c}}{ {k+c}}}\\&=1+\left(\frac{n+1}{n+1+c}\right)\sum_{k=0}^{n}\frac{\binom{n}{ k}}{\binom{{n+c}}{ {k+c}}}\\&\stackrel{\text{I.H.}}{=}1+\left(\frac{n+1}{n+1+c}\right)\left(\frac{n+1+c}{c+1}\right)\\&=\frac{(n+1)+c+1}{c+1}.\end{align*}
\end{proof}

\begin{theorem}[\cite{non}, Section 4.3]\label{thm:phitoh} If $n$ is a positive integer,
    \begin{align*}\nphi_n&=\sum_{\beta\modelsstrong n}(-1)^{\ell(\beta)+1}\frac{n}{\ell(\beta)}\nh_\beta. \end{align*}
\end{theorem}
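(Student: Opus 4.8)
The plan is to expand $\nphi_n$ directly from its ribbon expansion in Definition \ref{def:phipsi_n}, convert to the $\nh$ basis using the right-hand identity of (\ref{thm:hrCOB}), and then identify the resulting coefficient of each $\nh_\beta$ with $(-1)^{\ell(\beta)+1}\frac{n}{\ell(\beta)}$. Starting from $\nphi_n = \sum_{\alpha \models n} \frac{(-1)^{\ell(\alpha)-1}}{\binom{n-1}{\ell(\alpha)-1}} \nr_\alpha$ and substituting $\nr_\alpha = \sum_{\beta \succeq \alpha} (-1)^{\ell(\alpha)-\ell(\beta)} \nh_\beta$, the two sign factors combine to $(-1)^{2\ell(\alpha)-1-\ell(\beta)} = (-1)^{\ell(\beta)+1}$, which is independent of $\alpha$. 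After swapping the order of summation so that $\beta$ becomes the outer index (so that $\beta \succeq \alpha$ becomes $\alpha \preceq \beta$), the claim reduces to the purely combinatorial identity
$$\sum_{\alpha \preceq \beta} \frac{1}{\binom{n-1}{\ell(\alpha)-1}} = \frac{n}{\ell(\beta)},$$
where the sum runs over all compositions $\alpha$ refining $\beta$.

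To evaluate this inner sum, I would use the fact that a refinement $\alpha$ of $\beta = (\beta_1,\dots,\beta_\ell)$ is exactly a choice of a composition $\alpha^{(i)} \models \beta_i$ for each part, so that $\ell(\alpha) = \sum_i \ell(\alpha^{(i)})$. Grouping by the part-counts $c_i = \ell(\alpha^{(i)})$ and using that there are $\binom{\beta_i-1}{c_i-1}$ compositions of $\beta_i$ into $c_i$ parts, then writing $d_i = c_i - 1$ and $k = \sum_i d_i = \ell(\alpha) - \ell$, the inner sum becomes
$$\sum_{k \ge 0} \frac{1}{\binom{n-1}{k+\ell-1}} \sum_{d_1+\cdots+d_\ell = k} \prod_{i=1}^\ell \binom{\beta_i-1}{d_i}.$$
The innermost sum is a Vandermonde convolution equal to $\binom{n-\ell}{k}$, since $\sum_i (\beta_i - 1) = n - \ell$.

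Finally, this leaves $\sum_{k=0}^{n-\ell} \binom{n-\ell}{k}/\binom{n-1}{k+\ell-1}$, which is precisely the left-hand side of Lemma \ref{lem:helpfulc} with $N = n - \ell$ and $c = \ell - 1$, so that $N + c = n-1$ and $k + c = k + \ell - 1$. The lemma then yields the value $\frac{(n-\ell)+(\ell-1)+1}{(\ell-1)+1} = \frac{n}{\ell}$, matching $\frac{n}{\ell(\beta)}$ as required. I expect the main obstacle to be the bookkeeping in the middle step: recognizing that the coefficient of $\nh_\beta$ factors over the parts of $\beta$ and collapses, via a single application of Vandermonde, into one binomial ratio. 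Once that reduction is made, Lemma \ref{lem:helpfulc} performs exactly the evaluation it was evidently designed to supply, and the sign factor $(-1)^{\ell(\beta)+1}$ carries through unchanged.
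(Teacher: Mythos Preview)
Your proposal is correct and follows essentially the same route as the paper: start from the ribbon expansion of $\nphi_n$, substitute (\ref{thm:hrCOB}), combine signs to $(-1)^{\ell(\beta)+1}$, swap sums, and reduce to evaluating $\sum_{\alpha\preceq\beta}\binom{n-1}{\ell(\alpha)-1}^{-1}$ via Lemma~\ref{lem:helpfulc}. The only cosmetic difference is that the paper obtains the count of refinements $\alpha\preceq\beta$ with $\ell(\alpha)=k$ directly as $\binom{n-\ell(\beta)}{k-\ell(\beta)}$ by choosing supersets of $\set(\beta)$ in $[n-1]$, whereas you factor over the parts of $\beta$ and then invoke Vandermonde to reach the same binomial; the remaining application of Lemma~\ref{lem:helpfulc} is identical.
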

\begin{proof}    \begin{align*}\nphi_n&= \sum\limits_{\alpha\modelsstrong n} \frac{(-1)^{\ell(\alpha)-1}}{\binom{n-1}{\ell(\alpha)-1}} \nr_\alpha&&\text{by Definition \ref{def:phipsi_n}} \\&=\sum\limits_{\alpha\modelsstrong n} \frac{(-1)^{\ell(\alpha)-1}}{\binom{n-1}{\ell(\alpha)-1}} \sum_{\beta\succeq \alpha}(-1)^{\ell(\alpha)-\ell(\beta)}\nh_\beta&&\text{by (\ref{thm:hrCOB})} 
\\&=\sum_{\beta\modelsstrong n}\nh_\beta (-1)^{\ell(\beta)+1}\sum_{\alpha\preceq \beta}\frac{1}{\binom{n-1}{ \ell(\alpha)-1}}
\\&=\sum_{\beta\modelsstrong n}\nh_\beta (-1)^{\ell(\beta)+1}\sum_{k=1}^n\frac{1}{\binom{n-1}{ k-1}}\sum_{\alpha\preceq \beta}\mathbbm{1}_{\ell(\alpha)=k}
\\&=\sum_{\beta\modelsstrong n}\nh_\beta (-1)^{\ell(\beta)+1}\sum_{k=\ell(\beta)}^n\frac{\binom{n-\ell(\beta) }{ k-\ell(\beta)}}{\binom{n-1}{ k-1}}&&\\&=\sum_{\beta\modelsstrong n}\nh_\beta (-1)^{\ell(\beta)+1}\sum_{k=0}^{n-\ell(\beta)}\frac{\binom{n-\ell(\beta) }{ k}}{\binom{n-1}{ k+\ell(\beta)-1}}\\&=\sum_{\beta\modelsstrong n} (-1)^{\ell(\beta)+1}\frac{n}{\ell(\beta)}\nh_\beta.&&\text{by Lemma \ref{lem:helpfulc}}\end{align*}
\end{proof}

\begin{theorem}[\cite{non}, Proposition 3.3]\label{thm:hpprod} For $n\geq 1$,
    $$\sum_{i=0}^{n-1}\nh_i \npsi_{n-i}=n\nh_n.$$
\end{theorem}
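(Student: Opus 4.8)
The plan is to prove the identity $\sum_{i=0}^{n-1}\nh_i \npsi_{n-i}=n\nh_n$ by expanding the left-hand side using the change-of-basis formula for $\npsi_{n-i}$ into the noncommutative complete homogeneous basis. Since we just proved (in the theorem immediately preceding) that $\npsi_m = \sum_{\gamma \modelsstrong m}(-1)^{1+\ell(\gamma)}\gamma_{\ell(\gamma)}\nh_\gamma$, I would substitute this into the sum and then use the multiplicativity $\nh_i \nh_\gamma = \nh_{(i,\gamma_1,\dots,\gamma_{\ell(\gamma)})}$ (prepending a part $i$, which happens automatically from the definition $\nh_\alpha = \nh_{\alpha_1}\cdots\nh_{\alpha_{\ell(\alpha)}}$). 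The key bookkeeping step is to reindex the double sum by the resulting composition $\beta \modelsstrong n$ of the product $\nh_i\nh_\gamma$, tracking how each $\beta$ arises.

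The main combinatorial observation is how a composition $\beta=(\beta_1,\dots,\beta_{\ell(\beta)}) \modelsstrong n$ is produced. When $i \geq 1$, the product $\nh_i\npsi_{n-i}$ contributes $\nh_\beta$ exactly when $\beta_1 = i$ and $(\beta_2,\dots,\beta_{\ell(\beta)})$ is the composition $\gamma$ arising in $\npsi_{n-i}$, carrying weight $(-1)^{1+\ell(\gamma)}\gamma_{\ell(\gamma)} = (-1)^{\ell(\beta)}\beta_{\ell(\beta)}$ (using $\ell(\gamma)=\ell(\beta)-1$); when $i=0$, the term $\nh_0\npsi_n = \npsi_n$ contributes $\nh_\beta$ with $\gamma = \beta$ and weight $(-1)^{1+\ell(\beta)}\beta_{\ell(\beta)}$. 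Thus I would separate the $i=0$ term and then, for $\beta$ with $\ell(\beta)\geq 2$, sum the two contributions: the $i \geq 1$ contribution gives $(-1)^{\ell(\beta)}\beta_{\ell(\beta)}$ and the $i=0$ contribution gives $(-1)^{1+\ell(\beta)}\beta_{\ell(\beta)}$, which cancel. The surviving terms are those with $\ell(\beta)=1$, i.e.\ $\beta=(n)$, giving coefficient $n$ on $\nh_n$.

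The cleanest way to organize this is to first write
\begin{align*}
\sum_{i=0}^{n-1}\nh_i\npsi_{n-i} &= \npsi_n + \sum_{i=1}^{n-1}\nh_i\sum_{\gamma\modelsstrong n-i}(-1)^{1+\ell(\gamma)}\gamma_{\ell(\gamma)}\nh_\gamma \\
&= \npsi_n + \sum_{\substack{\beta\modelsstrong n\\ \ell(\beta)\geq 2}}(-1)^{\ell(\beta)}\beta_{\ell(\beta)}\nh_\beta,
\end{align*}
where in the last step each $\beta$ with $\ell(\beta)\geq 2$ corresponds bijectively to a choice of first part $i=\beta_1\in\{1,\dots,n-1\}$ and remaining composition $\gamma=(\beta_2,\dots,\beta_{\ell(\beta)})\modelsstrong n-i$, with $\gamma_{\ell(\gamma)}=\beta_{\ell(\beta)}$ and $1+\ell(\gamma)=\ell(\beta)$. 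Then expanding $\npsi_n=\sum_{\beta\modelsstrong n}(-1)^{1+\ell(\beta)}\beta_{\ell(\beta)}\nh_\beta$ and combining, every $\beta$ with $\ell(\beta)\geq 2$ appears with total coefficient $(-1)^{1+\ell(\beta)}\beta_{\ell(\beta)}+(-1)^{\ell(\beta)}\beta_{\ell(\beta)}=0$, leaving only $\beta=(n)$ with coefficient $(-1)^{2}\cdot n = n$, so the sum equals $n\nh_n$.

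I expect the only real obstacle to be the reindexing step: verifying that the map $\beta \mapsto (\beta_1,(\beta_2,\dots,\beta_{\ell(\beta)}))$ is a bijection from compositions of $n$ of length at least $2$ onto pairs $(i,\gamma)$ with $1\leq i\leq n-1$ and $\gamma\modelsstrong n-i$, and that $\nh_i\nh_\gamma=\nh_\beta$ under it. This is routine given the multiplicative definition of $\nh_\beta$, but it is where care is needed to ensure the length and last-part statistics transform correctly. An alternative, more generating-function-flavored proof would differentiate the series $\textbf{H}(t)$ against a noncommutative power-sum series, but the direct basis-expansion argument above is more elementary and fits the surrounding exposition.
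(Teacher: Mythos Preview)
Your proposal is correct and follows essentially the same approach as the paper: separate the $i=0$ term, expand each $\npsi_{n-i}$ in the $\nh$-basis via the preceding theorem, reindex the $i\ge 1$ sum as compositions $\beta\modelsstrong n$ with $\ell(\beta)\ge 2$, and observe that these cancel against the $\ell(\beta)\ge 2$ terms of $\npsi_n$, leaving $n\nh_n$. The paper's proof is presented more tersely but is line-for-line the same argument.
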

\begin{proof}
    \begin{align*}
        \sum_{i=0}^{n-1}\nh_i \npsi_{n-i}&=\npsi_n+ \sum_{i=1}^{n-1}\nh_i \npsi_{n-i}\\
        &=\sum_{\beta\modelsstrong n}(-1)^{1 + \ell(\beta)}\beta_{\ell(\beta)} \nh_\beta+ \sum_{i=1}^{n-1}\nh_i \sum_{\beta\modelsstrong n-i}(-1)^{1 + \ell(\beta)}\beta_{\ell(\beta)} \nh_\beta\\
        &=\sum_{\beta\modelsstrong n}(-1)^{1+ \ell(\beta)}\beta_{\ell(\beta)} \nh_\beta+ \sum_{\substack{\beta\modelsstrong n\\\ell(\beta)\ge2}}(-1)^{(\ell(\beta))}\beta_{\ell(\beta)} \nh_\beta\\&=n\nh_n.
    \end{align*} 
\end{proof}
\noindent There are other equally natural choices for an analogue of the power sums.  Gelfand et al.\ chose these based on two generating series  relations on the symmetric functions, whose analogues below are each satisfied by only one of the noncommutative power sum symmetric functions.
\begin{theorem}[\cite{non}, Section 3.1]\label{thm:powersumsseries}Let
$$\boldsymbol{\psi}(t) = \sum_{n \in \mathbb{Z}^+} \frac{\npsi_n}{n}t^{n} \text{ and }\boldsymbol{\phi}(t) = \sum_{n \in \mathbb{Z}^+} \frac{\nphi_n}{n}t^n.$$  Then
\begin{eqnarray}
\label{eq:NSymdH=HPsi}
\frac{d}{dt}(\textbf{H}(t)) = \textbf{H}(t)\frac{d}{dt}\boldsymbol{\psi}(t),
\end{eqnarray}

\noindent and 
\begin{eqnarray}\label{eq:NSymdH=HPhi}
\textbf{H}(t) = \exp(\boldsymbol{\phi}(t)),\end{eqnarray}
or equivalently \begin{eqnarray}\label{eq:philog}
\label{NSymdH=HPhi}\boldsymbol{\phi}(t)=\log\left(1+\sum_{k\geq 1}\nh_kt^k\right).\end{eqnarray}
\end{theorem}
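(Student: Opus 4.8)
The plan is to treat the two displayed generating-series identities separately, extracting the coefficient of a power of $t$ in each and reducing to the change-of-basis theorems already established. Throughout I keep in mind that the coefficients live in the noncommutative ring $\nsym$, so factor order must be tracked carefully; this is the one genuine subtlety.

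For \eqref{eq:NSymdH=HPsi} I would simply multiply out the right-hand side and compare coefficients. Since $\frac{d}{dt}\textbf{H}(t) = \sum_{n\geq 1} n\nh_n t^{n-1}$ and $\frac{d}{dt}\boldsymbol{\psi}(t) = \sum_{n\geq 1}\npsi_n t^{n-1}$, the coefficient of $t^{n-1}$ in $\textbf{H}(t)\frac{d}{dt}\boldsymbol{\psi}(t)$ is $\sum_{i=0}^{n-1}\nh_i\npsi_{n-i}$, with the factors kept in the order $\nh$ then $\npsi$, exactly as forced by the product. This is precisely the left-hand side of Theorem \ref{thm:hpprod}, which equals $n\nh_n$, matching the coefficient of $t^{n-1}$ in $\frac{d}{dt}\textbf{H}(t)$. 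Thus \eqref{eq:NSymdH=HPsi} is just the generating-function repackaging of Theorem \ref{thm:hpprod}.

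For the second identity I would prove the logarithmic form \eqref{eq:philog} directly and then deduce \eqref{eq:NSymdH=HPhi}. Writing $G(t) = \textbf{H}(t) - 1 = \sum_{k\geq 1}\nh_k t^k$, the formal logarithm is $\log(\textbf{H}(t)) = \sum_{m\geq 1}\frac{(-1)^{m-1}}{m}G(t)^m$. The key observation is that, because $\nh_\beta = \nh_{\beta_1}\cdots\nh_{\beta_m}$ with the factors in order, expanding $G(t)^m$ and indexing each tuple of positive integers by a strong composition $\beta$ of length $m$ gives
$$G(t)^m = \sum_{\ell(\beta)=m} \nh_\beta\, t^{|\beta|}.$$
Summing over $m$ and grouping by $n = |\beta|$ yields $\log(\textbf{H}(t)) = \sum_{n\geq 1} t^n \sum_{\beta\modelsstrong n}\frac{(-1)^{\ell(\beta)-1}}{\ell(\beta)}\nh_\beta$. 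On the other hand, Theorem \ref{thm:phitoh} gives $\frac{\nphi_n}{n} = \sum_{\beta\modelsstrong n}\frac{(-1)^{\ell(\beta)+1}}{\ell(\beta)}\nh_\beta$, and since $(-1)^{\ell(\beta)-1} = (-1)^{\ell(\beta)+1}$ the coefficients of $t^n$ agree term by term, so $\boldsymbol{\phi}(t) = \log(\textbf{H}(t))$, which is \eqref{eq:philog}.

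Finally, to pass from \eqref{eq:philog} to $\textbf{H}(t) = \exp(\boldsymbol{\phi}(t))$, I would invoke the fact that $\exp$ and $\log(1+\cdot)$ are mutually inverse operations on formal power series in the single variable $t$ with no constant term. The point to be careful about is the noncommutativity: although the coefficients $\nh_n$ and $\nphi_n$ do not commute in $\nsym$, every monomial appearing in $\exp(\boldsymbol{\phi}(t))$ or $\log(\textbf{H}(t))$ is built from powers of one fixed series ($\boldsymbol{\phi}(t)$, respectively $G(t)$), and powers of a single series commute with one another, so the classical one-variable identity $\exp(\log(1+G)) = 1+G$ applies verbatim. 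I expect this to be the main (if modest) obstacle: it is exactly what rules out the naive argument $\frac{d}{dt}\exp(\boldsymbol{\phi}) = \exp(\boldsymbol{\phi})\,\boldsymbol{\phi}'$, which would require $\boldsymbol{\phi}(t)$ and $\boldsymbol{\phi}'(t)$ to commute—and they do not—and it is why the direct comparison of the logarithmic expansion with Theorem \ref{thm:phitoh} is the cleanest route.
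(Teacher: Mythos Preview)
Your proposal is correct and follows essentially the same route as the paper: both reduce \eqref{eq:NSymdH=HPsi} to Theorem~\ref{thm:hpprod} by comparing coefficients of $t^{n-1}$, and both prove \eqref{eq:philog} by expanding $\log(1+G(t))$ as $\sum_{m\geq 1}\frac{(-1)^{m-1}}{m}G(t)^m$, identifying the coefficient of $t^n$ as $\sum_{\beta\modelsstrong n}\frac{(-1)^{\ell(\beta)-1}}{\ell(\beta)}\nh_\beta$, and invoking Theorem~\ref{thm:phitoh}. Your additional remark that the equivalence of \eqref{eq:NSymdH=HPhi} and \eqref{eq:philog} holds because $\exp$ and $\log(1+\cdot)$ are inverse on power series in a single series (so noncommutativity of the coefficients is irrelevant) is a welcome clarification that the paper leaves implicit.
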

\begin{proof} As explained briefly in \cite{non},
    Equation (\ref{eq:NSymdH=HPsi}) follows immediately from Theorem \ref{thm:hpprod} and equation (\ref{eq:NSymdH=HPhi}) is only slightly less straightforward:
    \begin{align*}
   \log \left(1+\sum_{k\geq 1}\nh_k t^k\right)&=\sum_{n=1}^\infty \frac{(-1)^{n-1}}{n}\left(\sum_{k\geq 1}\nh_k t^k\right)^n\\&=\sum_{n=1}^\infty \frac{(-1)^{n-1}}{n}\sum_{\ell(\alpha)=n}
 \nh_{\alpha_1}\nh_{\alpha_2}\cdots\nh_{\alpha_n}t^{\sum_i{\alpha_i}} \\
 &= \sum_{s=1}^\infty \frac{t^s}{s}\sum_{\beta\modelsstrong s}(-1)^{\ell(\beta)+1}\frac{s}{\ell(\beta)}\nh_\beta\\&=\sum_{s=1}^\infty \frac{t^s}{s}\nphi_s,\end{align*}
 where the last equality follows from Theorem \ref{thm:phitoh}.
 \end{proof}

\noindent Note that while the analogous differential equation in commuting variables, $$\displaystyle{\frac{d}{dt}(H(t)) = H(t)P(t)},$$ defines a unique basis of power sums from the basis of homogeneous complete symmetric functions, there are many equivalent ways to write the same relationship that yield distinct noncommutative analogues.  (To see one additional easy example, reversing the order of the right-hand side of (\ref{eq:NSymdH=HPsi}) would yield a different basis than $\{\npsi_n\}$, which Gelfand et al.\ do not name, as the result is sufficiently similar as to not be interesting.)

\begin{corollary}[\cite{non}, Section 3.1] For $n$ a nonnegative integer,
    $$\chi(\npsi_n)=p_n\text{ and }\chi(\nphi_n)=p_n.$$
\end{corollary}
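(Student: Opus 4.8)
The plan is to deduce both equalities by pushing the forgetful ring homomorphism $\chi$ through the generating-function identities of Theorem \ref{thm:powersumsseries} and comparing with their classical counterparts in $\sym$. The case $n=0$ is immediate, since $\npsi_0 = \nphi_0 = 1$ and $\chi(1) = 1 = p_0$, so I would assume $n \geq 1$. The structural fact I would lean on throughout is that $\chi$ is a \emph{grading-preserving ring homomorphism} from $\mathbb{C}[[\xx_1,\xx_2,\dots]]$ to $\mathbb{C}[[x_1,x_2,\dots]]$; in particular, since $\chi(\nh_n) = h_n$, it carries $\textbf{H}(t) = \sum_{n\geq 0} \nh_n t^n$ to the classical series $H(t) = \sum_{n\geq 0} h_n t^n$, and because it preserves degree it commutes with the formal operations in the auxiliary variable $t$ (differentiation $\tfrac{d}{dt}$, multiplication of power series, and the $\exp$/$\log$ of series with vanishing constant term).

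For $\nphi_n$, I would apply $\chi$ to the identity $\textbf{H}(t) = \exp(\nphi(t))$ from (\ref{eq:NSymdH=HPhi}). Since $\nphi(t) = \sum_{n\geq 1} \tfrac{\nphi_n}{n} t^n$ has zero constant term and $\chi$ is a ring homomorphism, the right side becomes $\exp(\chi(\nphi(t))) = \exp\bigl(\sum_{n\geq 1} \tfrac{\chi(\nphi_n)}{n} t^n\bigr)$, while the left side becomes $H(t)$. The classical Newton--Girard identity gives $H(t) = \exp\bigl(\sum_{n\geq 1} \tfrac{p_n}{n} t^n\bigr)$, and since $\exp$ is injective on power series with zero constant term over a field of characteristic zero, matching the two exponents coefficientwise yields $\chi(\nphi_n) = p_n$.

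For $\npsi_n$, I would apply $\chi$ to (\ref{eq:NSymdH=HPsi}), namely $\tfrac{d}{dt}\textbf{H}(t) = \textbf{H}(t)\tfrac{d}{dt}\npsi(t)$. Using that $\chi$ commutes with $\tfrac{d}{dt}$ and with the product, this becomes $H'(t) = H(t)\sum_{n\geq 1} \chi(\npsi_n) t^{n-1}$, which I would compare with the classical relation $H'(t) = H(t)\sum_{n\geq 1} p_n t^{n-1}$; cancelling the invertible series $H(t)$ (its constant term is $1$) forces $\chi(\npsi_n) = p_n$. A more hands-on alternative, avoiding formal calculus entirely, is to apply $\chi$ to Theorem \ref{thm:hpprod}, obtaining $\sum_{i=0}^{n-1} h_i\,\chi(\npsi_{n-i}) = n\,h_n$; this is exactly the classical recursion $\sum_{i=0}^{n-1} h_i\,p_{n-i} = n\,h_n$, so an easy induction on $n$ (base case $\chi(\npsi_1) = \chi(\nh_1) = h_1 = p_1$) forces $\chi(\npsi_n) = p_n$.

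The only genuinely delicate point is the interchange step: one must justify that applying $\chi$ termwise commutes with $\tfrac{d}{dt}$, with multiplication of $t$-series, and with $\exp$/$\log$. This is where I would be careful, invoking that $\chi$ is a degree-preserving ring homomorphism so that each coefficient of $t^s$ is a finite algebraic combination on which $\chi$ acts as an algebra map, together with the fact that $\npsi(t)$ and $\nphi(t)$ have vanishing constant term so that the exponential and logarithm are well defined. The remaining ingredients---the standard commutative identities $H(t) = \exp\bigl(\sum_{n\geq 1} p_n t^n/n\bigr)$ and $H'(t) = H(t)\sum_{n\geq 1} p_n t^{n-1}$---may be cited from \cite{Macdonald} or \cite{ECII}.
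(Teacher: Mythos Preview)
Your proposal is correct and takes essentially the same approach as the paper: the paper states this result as an unproved corollary placed immediately after Theorem~\ref{thm:powersumsseries}, so the intended argument is precisely to apply $\chi$ to the generating-series identities (\ref{eq:NSymdH=HPsi}) and (\ref{eq:NSymdH=HPhi}) and compare with their classical commutative counterparts, which is exactly what you do. Your careful treatment of why $\chi$ commutes with the formal operations in $t$, and your alternative via Theorem~\ref{thm:hpprod}, are more detail than the paper provides but entirely in the same spirit.
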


\begin{theorem}[\cite{non}, Section 4.2]\label{thm:htopsi}  For $n$ a nonnegative integer,
    $$\nh_n=\sum_{\beta \modelsstrong n} \frac{1}{\prod_{i=1}^{\ell(\beta)}\sum_{j=1}^i\beta_j} \npsi_\beta.$$
\end{theorem}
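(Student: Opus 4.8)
The plan is to prove this by induction on $n$, using the recursion of Theorem~\ref{thm:hpprod} as the engine. Rewriting that identity with $j = n-i$ gives the equivalent statement
$$\nh_n = \frac{1}{n}\sum_{j=1}^{n}\nh_{n-j}\,\npsi_j,$$
valid for $n \geq 1$. This expresses $\nh_n$ in terms of lower-degree complete homogeneous functions multiplied \emph{on the right} by a single $\npsi_j$, which is exactly the structure needed to build up compositions one part at a time from the right.

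For the base case $n=0$, both sides equal $1$ (the only composition of $0$ is the empty composition, which contributes the empty product $1$ together with $\npsi_\emptyset = 1$). Assume the formula holds for all nonnegative integers less than $n$. Substituting the inductive hypothesis for each $\nh_{n-j}$ with $1 \le j \le n$ (where $j=n$ contributes the term $\nh_0 = 1$) and using $\npsi_\gamma \npsi_j = \npsi_{(\gamma_1,\dots,\gamma_{\ell(\gamma)},j)}$, I would collect terms according to the composition $\beta = (\gamma,j)$ of $n$. Every $\beta \modelsstrong n$ arises exactly once this way, by taking $j = \beta_{\ell(\beta)}$ to be its last part and $\gamma = (\beta_1,\dots,\beta_{\ell(\beta)-1})$ to be a composition of $n - \beta_{\ell(\beta)}$. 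This yields
$$\nh_n = \frac{1}{n}\sum_{\beta \modelsstrong n}\frac{1}{\prod_{i=1}^{\ell(\beta)-1}\sum_{k=1}^i\beta_k}\,\npsi_\beta.$$

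The key algebraic step, and the one that makes everything click, is that the denominators telescope multiplicatively: since the final partial sum is $\sum_{k=1}^{\ell(\beta)}\beta_k = |\beta| = n$, we have
$$\prod_{i=1}^{\ell(\beta)}\sum_{k=1}^i\beta_k = n\cdot\prod_{i=1}^{\ell(\beta)-1}\sum_{k=1}^i\beta_k,$$
so the leading factor $1/n$ combines with the inductive coefficient to produce exactly $1/\prod_{i=1}^{\ell(\beta)}\sum_{k=1}^i\beta_k$, the claimed coefficient of $\npsi_\beta$. This completes the induction.

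I do not anticipate a serious obstacle; the only thing to be careful about is the bookkeeping of the bijection $\beta \leftrightarrow (\gamma,j)$ and ensuring the noncommutative product $\npsi_\gamma\npsi_j$ appends the new part on the correct (right) side, matching the placement of $\npsi_j$ on the right in the recursion. A purely generating-function alternative is also available—comparing the coefficients of $t^{n-1}$ in Equation~\eqref{eq:NSymdH=HPsi} reproduces the same recursion—but the inductive argument above is the most transparent route to the explicit product formula.
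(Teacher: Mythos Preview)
Your proof is correct and essentially identical to the paper's: both argue by strong induction on $n$, invoke Theorem~\ref{thm:hpprod} rewritten as $\nh_n=\frac{1}{n}\sum_i \nh_i\npsi_{n-i}$, substitute the inductive hypothesis, and then reindex by the bijection appending the last part to a smaller composition so that the extra factor of $n$ is absorbed as the final partial sum.
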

\begin{proof} By strong induction on $n$, with the base case of $n=0$ being trivial:
\begin{align*}\nh_n&=\frac{1}{n}\sum_{i=0}^{n-1}\nh_i\npsi_{n-i}\\
&=\frac{1}{n}\sum_{i=0}^{n-1}\sum_{\beta\modelsstrong i}\frac{1}{\prod_{k=1}^{\ell(\beta)}\sum_{j=1}^k\beta_j} \npsi_\beta\npsi_{n-i}\\
&=\sum_{\gamma \modelsstrong n} \frac{1}{\prod_{i=1}^{\ell(\gamma)}\sum_{j=1}^i\gamma_j} \npsi_\gamma,
\end{align*}
where $\gamma=(\beta,n-i).$  
\end{proof}

\begin{theorem}[\cite{non}, Section 4.3]\label{thm:htophi}  For $n$ a nonnegative integer,
    $$\nh_n=\sum_{\beta \modelsstrong n} \frac{1}{\ell(\beta)!\prod_{i=1}^{\ell(\beta)}\beta_i} \nphi_\beta.$$
\end{theorem}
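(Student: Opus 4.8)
The plan is to prove this by strong induction on $n$, in close parallel to the proof of Theorem~\ref{thm:htopsi}. The base case $n=0$ is trivial since both sides equal $1$. For the inductive step, the natural starting point is a relation expressing $n\nh_n$ in terms of lower $\nh_i$ and the power sums $\nphi_{n-i}$, analogous to how Theorem~\ref{thm:hpprod} drove the previous proof. Such a relation should follow from the generating-series identity $\textbf{H}(t) = \exp(\boldsymbol{\phi}(t))$ in (\ref{eq:NSymdH=HPhi}): differentiating gives $\textbf{H}'(t) = \textbf{H}(t)\boldsymbol{\phi}'(t)$, and comparing coefficients of $t^{n-1}$ yields
\begin{align*}
n\nh_n = \sum_{i=0}^{n-1}\nh_i\,\nphi_{n-i}.
\end{align*}
(This is the $\nphi$-analogue of Theorem~\ref{thm:hpprod}; I would either cite it or extract it quickly from the exponential relation.)

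With that recursion in hand, I would substitute the inductive hypothesis for each $\nh_i$ with $i<n$, writing $\nh_i = \sum_{\beta\modelsstrong i} \frac{1}{\ell(\beta)!\prod_{j=1}^{\ell(\beta)}\beta_j}\nphi_\beta$, so that
\begin{align*}
\nh_n = \frac{1}{n}\sum_{i=0}^{n-1}\sum_{\beta\modelsstrong i}\frac{1}{\ell(\beta)!\prod_{j=1}^{\ell(\beta)}\beta_j}\,\nphi_\beta\,\nphi_{n-i}.
\end{align*}
Each term $\nphi_\beta\,\nphi_{n-i}$ equals $\nphi_\gamma$ where $\gamma=(\beta,n-i)$ is the concatenation appending a final part $n-i$, exactly as in the $\npsi$ proof. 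The goal is then to show that, after reindexing by $\gamma\modelsstrong n$, the total coefficient of each $\nphi_\gamma$ collapses to $\frac{1}{\ell(\gamma)!\prod_{i=1}^{\ell(\gamma)}\gamma_i}$.

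The main obstacle—and the key difference from Theorem~\ref{thm:htopsi}—is that the coefficient here is \emph{not} obtained by the same telescoping simplicity; the factor $\ell(\beta)!$ means the last part $\gamma_{\ell(\gamma)}=n-i$ can be inserted in several positions, so I expect to need to sum over all ways the final part arises. Concretely, writing $\gamma=(\gamma_1,\dots,\gamma_m)$ with $m=\ell(\gamma)$, the term $\nphi_\gamma$ receives contributions only from the single decomposition $\beta=(\gamma_1,\dots,\gamma_{m-1})$, $n-i=\gamma_m$, so the coefficient of $\nphi_\gamma$ is $\frac{1}{n}\cdot\frac{1}{(m-1)!\prod_{j=1}^{m-1}\gamma_j}$. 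The crux is then verifying the arithmetic identity
\begin{align*}
\frac{1}{n}\cdot\frac{1}{(m-1)!\,\prod_{j=1}^{m-1}\gamma_j} = \frac{1}{m!\,\prod_{j=1}^{m}\gamma_j},
\end{align*}
which is \emph{false} as stated, signaling that $\nphi_\beta\nphi_{n-i}=\nphi_\gamma$ is too naive: since each fixed $\gamma$ of length $m$ arises from exactly one decomposition but the recursion has the $\frac1n$ prefactor, I would need to track that $n=\sum_j\gamma_j$ and that the $\ell(\beta)!$ versus $\ell(\gamma)!=m!$ discrepancy is reconciled precisely by summing the recursion over which part plays the role of the appended $\gamma_m$. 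Thus the real verification is a symmetrization argument: summing $\frac{1}{n}\cdot\frac{\gamma_{\sigma(m)}}{\ell(\gamma)!\prod_j\gamma_j}\cdot(\text{something})$ over all cyclic or final-part choices, exploiting that $\sum_{j}\gamma_j=n$, to confirm the stated coefficient. I expect this bookkeeping—correctly accounting for the factorial and the $1/n$ prefactor against all admissible final parts—to be the genuinely delicate step, whereas the concatenation structure $\nphi_\beta\nphi_{n-i}=\nphi_{(\beta,n-i)}$ and the generating-function recursion are routine.
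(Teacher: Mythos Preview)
Your proposed recursion $n\nh_n=\sum_{i=0}^{n-1}\nh_i\,\nphi_{n-i}$ is false in $\nsym$, and this is the real gap. You obtain it by differentiating $\textbf{H}(t)=\exp(\boldsymbol{\phi}(t))$ and asserting $\textbf{H}'(t)=\textbf{H}(t)\boldsymbol{\phi}'(t)$; but for a noncommutative exponential the chain rule fails, since $\frac{d}{dt}\boldsymbol{\phi}(t)^k=\sum_{j=0}^{k-1}\boldsymbol{\phi}(t)^j\boldsymbol{\phi}'(t)\boldsymbol{\phi}(t)^{k-1-j}$ and the $\nphi_m$ do not commute with one another. Concretely, at $n=3$ your recursion reads $3\nh_3=\nphi_3+\nh_1\nphi_2+\nh_2\nphi_1$; substituting $\nphi_2=2\nh_2-\nh_1^2$ and $\nphi_3=3\nh_3-\tfrac{3}{2}(\nh_1\nh_2+\nh_2\nh_1)+\nh_1^3$ from Theorem~\ref{thm:phitoh} gives $3\nh_3+\tfrac{1}{2}(\nh_1\nh_2-\nh_2\nh_1)$ on the right, not $3\nh_3$. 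This is precisely why your coefficient check yields the impossible identity $m\gamma_m=n$: each $\nphi_\gamma$ really does receive only the single contribution from $\beta=(\gamma_1,\dots,\gamma_{m-1})$, and no ``symmetrization over which part is appended'' is available in the noncommutative product.

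The paper circumvents this by avoiding any $\nh$--$\nphi$ recursion. Instead it runs a simultaneous induction on the desired formula for $\nh_n$ and its $\boldsymbol{\omega}$-image for $\nel_n$, using the genuinely valid relation $\nh_n=\sum_{i=0}^{n-1}(-1)^{n-i-1}\nh_i\nel_{n-i}$ from~(\ref{NSymeandh}). Substituting both hypotheses and writing $\delta=(\beta,\gamma)$ produces the coefficient $\frac{1}{\ell(\delta)!\prod\delta_i}\sum_{j=1}^{\ell(\delta)}(-1)^{j-1}\binom{\ell(\delta)}{j}$, which collapses by the binomial theorem. The key point is that the split $\delta=(\beta,\gamma)$ now ranges over \emph{all} $\ell(\delta)$ choices of where to cut, supplying exactly the symmetrization you were reaching for but could not extract from your one-sided recursion.
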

\begin{proof} By induction on $n$, with the base case of $n=0$ being trivial.  We simultaneously prove both the statement, and its image under the 
$\boldsymbol{\omega}$ map: $$\nel_n=\sum_{\beta \modelsstrong n} \frac{1}{\ell(\beta)!\prod_{i=1}^{\ell(\beta)}\beta_i} (-1)^{|\beta|-\ell(\beta)}\nphi_\beta.$$

\noindent Then 
\begin{align*}\nh_n&=\sum_{i=0}^{n-1}(-1)^{n-i-1}\nh_i\nel_{n-i}&&\text{by (\ref{NSymeandh})}\\
&=\sum_{i=0}^{n-1}(-1)^{n-i-1}\sum_{\beta \modelsstrong i} \left(\frac{1}{\ell(\beta)!\prod_{i=1}^{\ell(\beta)}\beta_i} \nphi_\beta\right)\\&\hspace{1.6in}\left(\sum_{\gamma \modelsstrong n-i} \frac{1}{\ell(\gamma)!\prod_{i=1}^{\ell(\gamma)}\gamma_i} (-1)^{|\gamma|-\ell(\gamma)}\nphi_\gamma\right)&&\text{by I.H.}\\
&=\sum_{\delta\modelsstrong n}\nphi_\delta\frac{ 1}
{\ell(\delta)!\prod_{i=1}^{\ell(\delta)}\delta_i}\sum_{j=1}^{\ell(\delta)}(-1)^{j-1}\binom{\ell(\delta)}{ j}&& \delta=(\beta,\gamma)\text{ and }j=\ell(\beta)\\
&=\sum_{\delta \modelsstrong n} \frac{1}{\ell(\delta)!\prod_{i=1}^{\ell(\delta)}\delta_i} \nphi_\delta.&&\text{by Binomial Theorem}
\end{align*}
  Applying the $\boldsymbol{\omega}$ map again completes the induction.
\end{proof}
\begin{corollary}[Gelfand et al.\ \cite{non}]
    Both $\{\boldsymbol{\psi}_{\alpha}\}_{ \alpha \modelsstrong n}$ and $\{\boldsymbol{\phi}_\alpha\}_{ \alpha \modelsstrong n}$ are bases of $\textbf{NSym}^n$. 
\end{corollary}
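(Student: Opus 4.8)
The plan is to leverage that $\{\nh_\alpha\}_{\alpha\modelsstrong n}$ is already known to be a basis of $\nsym^n$ (Corollary \ref{cor:free}), and to show that the matrix expressing each $\nh_\alpha$ in the family $\{\npsi_\beta\}$ (respectively $\{\nphi_\beta\}$) is triangular with nonzero diagonal entries, hence invertible. Since $\nsym^n$ has dimension equal to the number of strong compositions of $n$, which is exactly the cardinality of $\{\npsi_\alpha\}_{\alpha\modelsstrong n}$ and of $\{\nphi_\alpha\}_{\alpha\modelsstrong n}$, an invertible change of basis from a known basis immediately forces both families to be bases as well.

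First I would exploit multiplicativity. Since $\nh_\alpha=\nh_{\alpha_1}\cdots\nh_{\alpha_{\ell(\alpha)}}$ and $\npsi_\alpha=\npsi_{\alpha_1}\cdots\npsi_{\alpha_{\ell(\alpha)}}$, I apply Theorem \ref{thm:htopsi} to each single-part factor $\nh_{\alpha_i}$ and expand the resulting product
$$\nh_\alpha=\prod_{i=1}^{\ell(\alpha)}\left(\sum_{\beta^{(i)}\modelsstrong\alpha_i}c_{\beta^{(i)}}\,\npsi_{\beta^{(i)}}\right),$$
where $c_{\beta^{(i)}}$ is the scalar prescribed by Theorem \ref{thm:htopsi}. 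Distributing and concatenating the subcompositions $\beta^{(i)}$ produces a sum indexed by exactly the refinements $\beta\preceq\alpha$, since selecting a composition of each part $\alpha_i$ and concatenating yields precisely the compositions refining $\alpha$; each coefficient is the product $\prod_i c_{\beta^{(i)}}$.

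Next I would identify the diagonal term. Length is additive under concatenation, so $\ell(\beta)=\sum_i\ell(\beta^{(i)})\geq\ell(\alpha)$ for every $\beta\preceq\alpha$, with equality precisely when each $\beta^{(i)}=(\alpha_i)$ is a single part, i.e.\ $\beta=\alpha$. The coefficient of $\npsi_\alpha$ is then $\prod_{i=1}^{\ell(\alpha)}\tfrac{1}{\alpha_i}\neq 0$ (the single-part coefficient of Theorem \ref{thm:htopsi}). Listing the strong compositions of $n$ in order of increasing length (breaking ties arbitrarily), the matrix $M=(M_{\alpha\beta})$ defined by $\nh_\alpha=\sum_\beta M_{\alpha\beta}\npsi_\beta$ is therefore triangular with nonzero diagonal, hence invertible; so $\{\npsi_\alpha\}_{\alpha\modelsstrong n}$ is a basis. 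The identical argument, using Theorem \ref{thm:htophi} in place of Theorem \ref{thm:htopsi}, handles $\{\nphi_\alpha\}_{\alpha\modelsstrong n}$, its single-part diagonal coefficient being $\tfrac{1}{1!\,\alpha_i}=\tfrac{1}{\alpha_i}\neq 0$.

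The main obstacle I anticipate is the bookkeeping of the refinement structure: verifying carefully that the per-part expansions multiply out to range over exactly the refinements $\beta\preceq\alpha$, that length is additive so $\ell(\beta)\geq\ell(\alpha)$ throughout, and that the unique minimal-length term is $\beta=\alpha$ with nonzero coefficient. Once this triangularity is established, invertibility and the basis conclusion are automatic. (One could instead attempt to invert the scalar relations of Theorems \ref{thm:htopsi} and \ref{thm:htophi} directly, but the multiplicative-triangularity argument—mirroring the classical proof that $\{p_\lambda\}$ is a basis of $\sym$—is cleaner and sidesteps any explicit inversion.)
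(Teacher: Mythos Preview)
Your proposal is correct and is precisely the standard argument the paper leaves implicit: the corollary is stated without proof immediately after Theorems \ref{thm:htopsi} and \ref{thm:htophi}, and the intended reasoning is exactly the multiplicativity-plus-triangularity argument you give. Your bookkeeping is sound, including the observation that within a fixed length the only surviving term is the diagonal one, so any tie-breaking works.
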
 
\noindent A number of the reoccurring statistics above are given names in \cite{non} and repeated here.  They need to be generalized to cover extending the above results to the multiplicative bases.

\begin{definition} Let $\beta = (\beta_1,\beta_2,\dots,\beta_{\ell(\beta)})$ be a strong composition.  Denote the \textbf{last part} of $\beta$ by 
$$\operatorname{lp}(\beta) = \beta_{\ell(\beta)}.$$
Also say $\beta$'s \textbf{product of partial sums},  \textbf{product}, and \textbf{special product}, respectively, are
\begin{eqnarray}
\pi_u(\beta) &=& \prod_{i=1}^{\ell(\beta)} \sum_{k=1}^i \beta_k = \beta_1(\beta_1 + \beta_2)\cdots(\beta_1 + \beta_2 + \cdots + \beta_{\ell(\beta)}); \label{piu}
\\ \nonumber
\\\prod \beta &=& \beta_1\beta_2\cdots \beta_{\ell(\beta)}; \label{prod}
\\ \nonumber
\\ \operatorname{sp}(\beta) &=& \ell(\beta)!\prod \beta. \label{sp}
\end{eqnarray}
Recall that if $\beta \preceq \alpha$, $\beta^{(i)}$ is the subcomposition of $\beta$ which sums to $\alpha_i$ for $i =1,\dots,\ell(\alpha)$.  Extend the above definitions to refinements $\beta \preceq \alpha$, with $|\beta^{(i)}| = \alpha_i$, $i \in [\ell(\alpha)]$:

\begin{minipage}[adjusting]{7cm}
\begin{eqnarray}
\operatorname{lp}(\beta,\alpha) &=& \prod_{i=1}^{\ell(\alpha)} \operatorname{lp}(\beta^{(i)}) \label{lpr}
\\ \ell(\beta,\alpha) &=& \prod_{i=1}^{\ell(\alpha)} \ell(\beta^{(i)}) \label{lr}
\end{eqnarray}
\end{minipage} 
\begin{minipage}[adjusting]{7.2cm}
\begin{eqnarray}
\pi_u(\beta,\alpha) &=& \prod_{i=1}^{\ell(\alpha)} \pi_u(\beta^{(i)}) \label{piur}
\\\operatorname{sp}(\beta,\alpha) &=& \prod_{i=1}^{\ell(\alpha)} \operatorname{sp}(\beta^{(i)}) \label{spr}
\end{eqnarray}
\end{minipage}
\end{definition}

\noindent In \cite{non}, all of the following change-of-basis equations were established.
\begin{theorem}\label{thm:nsymCOBeqns} For $\alpha\modelsstrong n$,
\begin{align}
&\boldsymbol{h}_\alpha = \sum_{\beta \preceq \alpha} (-1)^{|\alpha| - \ell(\beta)}\boldsymbol{e}_\beta; && \boldsymbol{e}_\alpha = \sum_{\beta \preceq \alpha} (-1)^{|\alpha| - \ell(\beta)}\boldsymbol{h}_\beta\label{ehCOB}
\\[1em]&\boldsymbol{h}_\alpha = \sum_{\beta \succeq \alpha} \boldsymbol{r}_\beta  &&\boldsymbol{r}_\alpha = \sum_{\beta \succeq \alpha} (-1)^{\ell(\alpha) - \ell(\beta)}\boldsymbol{h}_\beta \label{hrCOB}
\\[1em]&\boldsymbol{e}_\alpha = \sum_{\beta^t \succeq \alpha^r} \boldsymbol{r}_\beta  &&\boldsymbol{r}_\alpha = \sum_{\beta^r \succeq \alpha^t} (-1)^{\ell(\alpha^t) - \ell(\beta)}\boldsymbol{e}_\beta \label{erCOB}
\\[1em]&\boldsymbol{h}_\alpha = \sum_{\beta \preceq \alpha} \frac{1}{\pi_u(\beta,\alpha)}\boldsymbol{\psi}_\beta &&\boldsymbol{\psi}_\alpha = \sum_{\beta \preceq \alpha} (-1)^{\ell(\beta) - \ell(\alpha)}\operatorname{lp}(\beta,\alpha) \boldsymbol{h}_\beta \label{hPsiCOB}
\\[1em]&\boldsymbol{h}_\alpha = \sum_{\beta \preceq \alpha} \frac{1}{\operatorname{sp}(\beta,\alpha)}\boldsymbol{\phi}_\beta 
&&\boldsymbol{\phi}_\alpha = \sum_{\beta \preceq \alpha} (-1)^{\ell(\beta) - \ell(\alpha)} \frac{\prod \alpha}{\ell(\beta,\alpha)}\boldsymbol{h}_\beta \label{hPhiCOB}
\\[1em]&\boldsymbol{e}_\alpha = \sum_{\beta \preceq \alpha}  \frac{(-1)^{|\alpha| - \ell(\beta)}}{\pi_u(\beta^r,\alpha^r)}\boldsymbol{\psi}_\beta  &&\boldsymbol{\psi}_\alpha = \sum_{\beta \preceq \alpha} (-1)^{|\alpha| - \ell(\beta)}\operatorname{lp}(\beta^r,\alpha^r)\boldsymbol{e}_\beta \label{ePsiCOB}
\\[1em]&\boldsymbol{e}_\alpha = \sum_{\beta \preceq \alpha}  \frac{(-1)^{|\alpha| - \ell(\beta)}}{\operatorname{sp}(\beta,\alpha)}\boldsymbol{\phi}_\beta
&&\boldsymbol{\phi}_\alpha = \sum_{\beta \preceq \alpha} (-1)^{|\alpha| - \ell(\beta)} \frac{\prod \alpha}{\ell(\beta,\alpha)}\boldsymbol{e}_\beta \label{ePhiCOB}
\end{align}
\end{theorem}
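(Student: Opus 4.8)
The plan is to reduce every equation to the single-part transitions already established, exploiting that $\nh$, $\nel$, $\npsi$, $\nphi$ are all \emph{multiplicative} bases, together with the involutions $\boldsymbol{\psi}$ and $\boldsymbol{\omega}$. First I would dispatch the ribbon equations with essentially no new work: (\ref{hrCOB}) is exactly (\ref{thm:hrCOB}), and both directions of (\ref{erCOB}) are (\ref{thm:erCOB}) after observing that, since $\alpha^t=(\alpha^c)^r$ and reversal preserves both refinement and length, the condition $\beta^c\succeq\alpha$ is equivalent to $\beta^t\succeq\alpha^r$ while $\ell(\alpha^c)=\ell(\alpha^t)$. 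For (\ref{ehCOB}), the first equality is exactly Theorem \ref{thm:heCOB} (noting $(-1)^{\ell(\gamma)-|\alpha|}=(-1)^{|\alpha|-\ell(\gamma)}$), and the second follows by applying the automorphism $\boldsymbol{\psi}$, which by (\ref{NSyminvos}) interchanges $\nh_\beta$ and $\nel_\beta$.

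The core is a single multiplicativity lemma. Suppose $\{X_\alpha\}$ is one of our multiplicative bases and that a single-part expansion $\nh_n=\sum_{\gamma\modelsstrong n}c(\gamma)\,X_\gamma$ holds for all $n$. Then I claim $\nh_\alpha=\sum_{\beta\preceq\alpha}\bigl(\prod_{i=1}^{\ell(\alpha)}c(\beta^{(i)})\bigr)X_\beta$. To see this, expand $\nh_\alpha=\prod_{i=1}^{\ell(\alpha)}\nh_{\alpha_i}=\prod_{i=1}^{\ell(\alpha)}\sum_{\gamma^{(i)}\modelsstrong\alpha_i}c(\gamma^{(i)})X_{\gamma^{(i)}}$ and use the key bijection: a tuple $(\gamma^{(1)},\dots,\gamma^{(\ell(\alpha))})$ with $\gamma^{(i)}\modelsstrong\alpha_i$ is the same data as a refinement $\beta\preceq\alpha$ via concatenation $\beta=\gamma^{(1)}\cdots\gamma^{(\ell(\alpha))}$, under which $\beta^{(i)}=\gamma^{(i)}$ and, by multiplicativity of $X$, $X_{\gamma^{(1)}}\cdots X_{\gamma^{(\ell(\alpha))}}=X_\beta$. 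The identical argument runs in the reverse direction when a single-part expansion $X_n=\sum_\gamma d(\gamma)\nh_\gamma$ is known.

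Applying this lemma to Theorems \ref{thm:htopsi} and \ref{thm:htophi} (with $c(\gamma)=1/\pi_u(\gamma)$ and $c(\gamma)=1/\operatorname{sp}(\gamma)$) yields the left columns of (\ref{hPsiCOB}) and (\ref{hPhiCOB}) once the coefficients are recognized as $\prod_i\pi_u(\beta^{(i)})=\pi_u(\beta,\alpha)$ and $\prod_i\operatorname{sp}(\beta^{(i)})=\operatorname{sp}(\beta,\alpha)$, which are definitions (\ref{piur}) and (\ref{spr}). For the right columns, apply the lemma to the single-part expansions $\npsi_n=\sum_\beta(-1)^{1+\ell(\beta)}\operatorname{lp}(\beta)\nh_\beta$ and Theorem \ref{thm:phitoh}; here the only subtlety is the sign, where $\prod_i(-1)^{1+\ell(\beta^{(i)})}=(-1)^{\ell(\alpha)+\ell(\beta)}=(-1)^{\ell(\beta)-\ell(\alpha)}$ since $\sum_i\ell(\beta^{(i)})=\ell(\beta)$, and the remaining factors assemble into $\operatorname{lp}(\beta,\alpha)$ and $\prod\alpha/\ell(\beta,\alpha)$ via (\ref{lpr}), (\ref{lr}), (\ref{prod}). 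Finally I would obtain the four $\nel$-equations (\ref{ePsiCOB}) and (\ref{ePhiCOB}) from their $\nh$-counterparts by applying an involution termwise: for the $\nphi$ equations use $\boldsymbol{\psi}$, since by (\ref{NSyminvos}) and Theorem \ref{thm:inv_on_phi_psi} it sends $\nh_\beta\mapsto\nel_\beta$ and $\nphi_\beta\mapsto(-1)^{|\beta|-\ell(\beta)}\nphi_\beta$ without reordering; for the $\npsi$ equations use $\boldsymbol{\omega}$, which sends $\nh_\beta\mapsto\nel_{\beta^r}$ and $\npsi_\beta\mapsto(-1)^{|\beta|-\ell(\beta)}\npsi_{\beta^r}$, followed by the substitution $\alpha\mapsto\alpha^r$, $\gamma=\beta^r$.

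The main obstacle is bookkeeping in this last step. Applying $\boldsymbol{\omega}$ to (\ref{hPsiCOB}) produces $\npsi_{\beta^r}$ and $\nel_{\alpha^r}$, so recovering the clean statement of (\ref{ePsiCOB}) requires reindexing through reversal: one must check that $\beta\preceq\alpha\Leftrightarrow\beta^r\preceq\alpha^r$, that $\ell$ is reversal-invariant, and that the statistic appearing becomes precisely $\pi_u(\gamma^r,\alpha^r)$ (respectively $\operatorname{lp}(\gamma^r,\alpha^r)$) after the substitution, while simultaneously reconciling the accumulated sign $(-1)^{|\beta|-\ell(\beta)}$ from Theorem \ref{thm:inv_on_phi_psi} with the target exponent $|\alpha|-\ell(\beta)$. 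These are all routine once the reversal compatibility of refinement and of the block statistics (\ref{lpr})--(\ref{spr}) is established, but it is the one place where signs and index conventions must be tracked with care.
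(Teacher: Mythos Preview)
Your proposal is correct and is essentially the paper's own argument spelled out in full: reduce (\ref{hrCOB}) and (\ref{erCOB}) to Theorem~\ref{thm:ehtor}, get (\ref{ehCOB}) from Theorem~\ref{thm:heCOB} plus $\boldsymbol{\psi}$, and derive the power-sum lines from the single-part Theorems~\ref{thm:phitoh}, \ref{thm:htopsi}, \ref{thm:htophi} via multiplicativity and then pass to $\nel$ by an involution. The only cosmetic differences are that you make the multiplicativity lemma and the reversal bookkeeping explicit (the paper leaves both as one-line remarks) and that you use $\boldsymbol{\psi}$ rather than $\boldsymbol{\omega}$ for the $\nphi$ transitions, which is a harmless and slightly cleaner choice since it avoids the reindexing through reversal.
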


\begin{proof} Line (\ref{ehCOB}) comes from Theorem \ref{thm:heCOB} and an application of the $\boldsymbol{\psi}$ map (see line (\ref{NSyminvos})).  Lines (\ref{hrCOB}) and (\ref{erCOB}) were shown in Theorem \ref{thm:ehtor}.  The remaining lines follow from Theorems \ref{thm:phitoh}, \ref{thm:htopsi}, and \ref{thm:htophi}, the multiplicative definitions of $\nh_\alpha$, $\nel_\alpha$, $\nphi_\alpha$, and $\npsi_\alpha$, and applications of the $\boldsymbol{\omega}$ map (see Theorem \ref{thm:inv_on_phi_psi}).
\end{proof}

\noindent There are two remaining explicit change-of-basis formulas in \cite{non}, both of which follow from the product formula for $\nr_\alpha$, Theorem \ref{thm:ribbon_mult} above.
We need the following definition:
\begin{definition}
    Let $\alpha$ and $\beta$ be strong compositions of $n$.  Let $\gamma=\set^{-1}(\set(\alpha)\cup \set(\beta))$.  Then $\gamma \preceq \beta$, so we can let $\gamma=(\gamma^{(1)},\gamma^{(2)},\dots, \gamma^{(k)})$ give the subsequences such that $|\gamma^{(j)}| = \beta_j$.
    Then the \textbf{ribbon decomposition of $\alpha$ with respect to $\beta$} is $$\operatorname{rd}(\alpha,\beta)=(\gamma^{(1)},\gamma^{(2)},\cdots, \gamma^{(k)}).$$
    Furthermore, let $$\operatorname{psr}(\alpha,\beta)= (-1)^{|\gamma^{(1)}|-1}(-1)^{|\gamma^{(2)}|-1}\cdots (-1)^{|\gamma^{(k)}|-1}\mathbbm{1}_{\gamma^{(1)}\text{ is a hook }}\mathbbm{1}_{\gamma^{(2)}\text{ is a hook }}\cdots \mathbbm{1}_{\gamma^{(k)}\text{ is a hook }}$$
    and $$\operatorname{phr}(\alpha,\beta)= \frac{(-1)^{\ell(\gamma^{(1)})-1}}{\binom{n-1}{\ell(\gamma^{(1)})-1}}\cdots \frac{(-1)^{\ell(\gamma^{(k)})-1}}{\binom{n-1}{\ell(\gamma^{(k)})-1}}.$$
\end{definition}
\begin{example}  Let $\alpha=(1,3,2,4,4)$ and $\beta=(4,3,5,2)$.  Then $\gamma=(1,3,2,1,3,2,2)$ and we have a ribbon decomposition of $\alpha$ with respect to $\beta$ is $$\operatorname{rd}(\alpha,\beta)=((1,3),(2,1),(3,2),(2))$$
\end{example}
\noindent As first observed by Gelfand et al., it is easy to see from Definition \ref{def:phipsi_n} that
\begin{theorem}[\cite{non}, Prop.~4.23 and Prop.~4.27] Let $\alpha$ be a strong composition.  Then
$$\npsi_\alpha=\sum_{\beta\modelsstrong n}\operatorname{psr}(\beta,\alpha)\nr_\alpha\text{ and }\nphi_\alpha=\sum_{\beta\modelsstrong n}\operatorname{phr}(\beta,\alpha)\nr_\alpha.$$
\end{theorem}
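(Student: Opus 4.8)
The plan is to expand both sides in the ribbon basis $\{\nr_\beta\}_{\beta\modelsstrong n}$ and compare coefficients, exploiting multiplicativity together with the single-letter expansions of Definition \ref{def:phipsi_n} and the product rule of Theorem \ref{thm:ribbon_mult}. Write $m=\ell(\alpha)$, so that $\npsi_\alpha=\npsi_{\alpha_1}\cdots\npsi_{\alpha_m}$ and $\nphi_\alpha=\nphi_{\alpha_1}\cdots\nphi_{\alpha_m}$. Each factor already comes as a ribbon expansion indexed by compositions of $\alpha_i$: for $\npsi_{\alpha_i}$ only the hook compositions $(1^k,\alpha_i-k)$ contribute (each with sign $(-1)^{\ell-1}$), whereas for $\nphi_{\alpha_i}$ every $\eta\modelsstrong\alpha_i$ contributes, with coefficient $(-1)^{\ell(\eta)-1}/\binom{\alpha_i-1}{\ell(\eta)-1}$. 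I will treat the two identities in parallel, the only difference being which compositions index each factor.

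First I would record the iterated form of Theorem \ref{thm:ribbon_mult}: for any $\eta^{(1)},\dots,\eta^{(m)}$ with $\eta^{(i)}\modelsstrong\alpha_i$, $$\nr_{\eta^{(1)}}\cdots\nr_{\eta^{(m)}}=\sum_{G\subseteq\set(\alpha)}\nr_{\set^{-1}(S\setminus G)},$$ where $S=\set(\eta^{(1)}\cdots\eta^{(m)})$ is the set of the concatenation (namely $\set(\alpha)$ together with the shifted copies of each $\set(\eta^{(i)})$), and $G$ ranges over subsets of the block-boundary set $\set(\alpha)$. This follows by induction on $m$ from the two-term rule, since multiplying on the right by $\nr_{\eta^{(m+1)}}$ either keeps the new boundary (concatenation) or deletes it (near-concatenation).

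Next, fixing a target $\beta\modelsstrong n$, I would identify every tuple $(\eta^{(1)},\dots,\eta^{(m)},G)$ contributing $\nr_\beta$, i.e.\ satisfying $S\setminus G=\set(\beta)$. The key observation is that the boundary set $\set(\alpha)$ and the block interiors are disjoint: since $G\subseteq\set(\alpha)$ touches only boundaries while each $\set(\eta^{(i)})$ lives strictly inside block $i$, intersecting $S\setminus G=\set(\beta)$ with the interior of block $i$ forces $\set(\eta^{(i)})$ to equal $\set(\beta)$ restricted to that interior. That restriction coincides with the shifted set $\set(\gamma^{(i)})$ of the block $\gamma^{(i)}$ of $\gamma=\set^{-1}(\set(\alpha)\cup\set(\beta))$, so $\eta^{(i)}=\gamma^{(i)}$ is forced for every $i$, and intersecting with $\set(\alpha)$ forces $G=\set(\alpha)\setminus\set(\beta)$. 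Thus each $\beta$ is hit by a unique tuple. Reading off its coefficient, block $i$ contributes precisely the coefficient of $\nr_{\gamma^{(i)}}$ in $\npsi_{\alpha_i}$ (respectively $\nphi_{\alpha_i}$), and the product over $i$ is by definition $\operatorname{psr}(\beta,\alpha)$ (respectively $\operatorname{phr}(\beta,\alpha)$); in the $\npsi$ case this coefficient vanishes unless each $\gamma^{(i)}$ is a hook, which supplies the indicator in $\operatorname{psr}$.

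The main obstacle is the bookkeeping in this matching step: one must check that the interior and boundary parts of $\set(\beta)$ genuinely decouple, so that distinct gluing patterns cannot collide on the same $\beta$ and the tuple $(\eta^{(1)},\dots,\eta^{(m)},G)$ is recovered uniquely. This rests on the disjointness of block interiors from $\set(\alpha)$ and on the elementary fact that a composition of $\alpha_i$ is determined by its descent set; once it is in place the coefficient computation is a direct transcription of Definition \ref{def:phipsi_n}. An alternative is to induct on $\ell(\alpha)$, peeling off the final factor $\npsi_{\alpha_m}$ (resp.\ $\nphi_{\alpha_m}$) and applying Theorem \ref{thm:ribbon_mult} directly; this avoids stating the iterated product but requires showing that $\operatorname{psr}$ and $\operatorname{phr}$ factor correctly according to whether the last boundary survives in $\beta$.
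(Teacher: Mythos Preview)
Your approach is exactly the one the paper intends: the paper does not give a proof at all, merely stating that the identities ``follow from the product formula for $\nr_\alpha$, Theorem \ref{thm:ribbon_mult}'' and are ``easy to see from Definition \ref{def:phipsi_n}.''  Your argument supplies precisely those details---expanding each factor via Definition \ref{def:phipsi_n}, iterating Theorem \ref{thm:ribbon_mult}, and matching the unique contributing tuple to the ribbon decomposition $\gamma^{(1)},\dots,\gamma^{(m)}$---and is correct.

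One small remark: be aware that the paper's statement contains typos (the summand should be $\nr_\beta$, not $\nr_\alpha$, and the exponents and binomial arguments in the displayed definitions of $\operatorname{psr}$ and $\operatorname{phr}$ do not literally match the coefficients coming out of Definition \ref{def:phipsi_n}).  Your argument produces the correct coefficients $(-1)^{\ell(\gamma^{(i)})-1}$ and $(-1)^{\ell(\gamma^{(i)})-1}/\binom{\alpha_i-1}{\ell(\gamma^{(i)})-1}$; when you say ``by definition $\operatorname{psr}(\beta,\alpha)$'' you are implicitly reading those definitions as intended rather than as printed.
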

\begin{remark*}  Omitted here is the work in \cite{non} towards change of basis between $\nphi$ and $\npsi$, which did not result in as nice of change-of-basis formulas. The interested reader should consult Section 4.10 in \cite{non}, where there is a somewhat more complex formula for $\nphi_n$ in terms of $\{\npsi_{\alpha}\}_{\alpha\modelsstrong n}$.
\end{remark*}

\begin{note*} In \cite{intro}, the authors mention that $\{\omega(M_\alpha)\}$ yields the dual basis to $\{\boldsymbol{e}_\alpha\}$.  For our purposes later, it will be most natural to utilize $\psi$ to define a quasisymmetric analogue to the forgotten symmetric functions.  While the result is not quite a dual to $\{\boldsymbol{e}_\alpha\}$, the resulting basis still restricts to the forgotten symmetric functions under the forgetful map.  Therefore, define the \textbf{forgotten quasisymmetric function} (associated to $\alpha$) to be 
$$\For_{\alpha} = \psi(M_\alpha).$$
\end{note*}

\noindent Then, by duality, the following corresponding equations can be established.
\begin{corollary}\label{cor:qsymCOBeqns} For $\alpha\modelsstrong n$,
\begin{align}
&F_\alpha = \sum_{\beta \preceq \alpha} M_\beta &&M_\alpha = \sum_{\beta \preceq \alpha} (-1)^{\ell(\beta)-\ell(\alpha)} F_\beta \label{MFCOB}
\\[1em]&F_\alpha = \sum_{\beta^r \preceq \alpha^t} \For_\beta &&\For_\alpha = \sum_{\beta^t \preceq \alpha^r} (-1)^{\ell(\beta^t) - \ell(\alpha)}F_\beta \label{FForCOB}
\\[1em]&
\For_\alpha = \sum_{\beta \succeq \alpha} (-1)^{\ell(\alpha) - |\beta|}M_\beta && M_\alpha = \sum_{\beta \succeq \alpha} (-1)^{\ell(\alpha) - |\beta|}\For_\beta.\label{MForCOB}
\\[1em]&\psi_\alpha = z_\alpha \sum_{\beta \succeq \alpha} \frac{1}{\pi_u(\alpha,\beta)}M_\beta &&M_\alpha = \sum_{\beta \succeq \alpha} (-1)^{\ell(\alpha) - \ell(\beta)}\operatorname{lp}(\alpha,\beta) \frac{\psi_\beta}{z_\beta} \label{MPsiCOB}
\\[1em]&\phi_\alpha = z_\alpha\sum_{\beta \succeq \alpha} \frac{1}{sp(\alpha,\beta)}M_\beta &&M_\alpha = \sum_{\beta \succeq \alpha} (-1)^{\ell(\alpha)-  \ell(\beta)}\frac{\prod \beta}{\ell(\alpha,\beta)}\frac{\phi_{\beta}}{z_\beta} \label{MPhiCOB}
\\[1em]&\psi_\alpha = z_\alpha \sum_{\beta \succeq \alpha}  \frac{(-1)^{|\beta| - \ell(\alpha)}}{\pi_u(\alpha^r,\beta^r)}\For_\beta &&\For_\alpha = \sum_{\beta \succeq \alpha} (-1)^{|\beta| - \ell(\alpha)} \label{ForPsiCOB} \operatorname{lp}(\alpha^r,\beta^r)\frac{\psi_\beta}{z_\beta}
\\[1em]&\phi_\alpha = z_\alpha \sum_{\beta \succeq \alpha}  \frac{(-1)^{|\beta| - \ell(\alpha)}}{sp(\alpha,\beta)}\For_\beta &&\For_\alpha = \sum_{\beta \succeq \alpha} (-1)^{|\beta| - \ell(\alpha)} \frac{\prod \beta}{\ell(\alpha,\beta)}\frac{\phi_\beta}{z_\beta} \label{ForPhiCOB}
\end{align}
  \end{corollary}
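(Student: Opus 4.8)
The plan is to deduce every line of Corollary~\ref{cor:qsymCOBeqns} from the correspondingly-placed line of Theorem~\ref{thm:nsymCOBeqns} by transposing across the pairing $\langle\cdot,\cdot\rangle\colon\qsym\times\nsym\to\mathbb{C}$. The engine is an elementary transpose principle: since $\langle F_\alpha,\nr_\beta\rangle=\mathbbm{1}_{\alpha=\beta}$ from \eqref{QNinnerproduct}, the pairing is nondegenerate on each graded component $\qsym^n\times\nsym^n$, so each basis $\{v_\beta\}$ of $\nsym^n$ has a unique dual basis $\{\tilde v_\beta\}$ in $\qsym^n$. If $\{u_\alpha\}$ and $\{v_\beta\}$ are bases of $\nsym^n$ with dual bases $\{\tilde u_\alpha\}$ and $\{\tilde v_\beta\}$, and $u_\alpha=\sum_\beta c_{\alpha\beta}v_\beta$, then pairing against $u_\gamma$ gives $\langle\tilde v_\beta,u_\gamma\rangle=c_{\gamma\beta}=\langle\sum_\alpha c_{\alpha\beta}\tilde u_\alpha,\,u_\gamma\rangle$, so by nondegeneracy $\tilde v_\beta=\sum_\alpha c_{\alpha\beta}\tilde u_\alpha$. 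Thus a change-of-basis matrix in $\nsym$ and the matrix on the dual bases in $\qsym$ carry identical coefficients, with only the roles of the two indices interchanged.

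First I would pin down the five dual-basis pairs that are needed. From \eqref{QNinnerproduct} and \eqref{QNinnerproduct2}, $\{F_\alpha\}$ is dual to $\{\nr_\alpha\}$ and $\{M_\alpha\}$ is dual to $\{\nh_\alpha\}$. The quasisymmetric power sums satisfy $\langle\psi_\alpha,\npsi_\beta\rangle=\langle\phi_\alpha,\nphi_\beta\rangle=z_\alpha\mathbbm{1}_{\alpha=\beta}$, so $\{\psi_\alpha/z_\alpha\}$ is dual to $\{\npsi_\alpha\}$ and $\{\phi_\alpha/z_\alpha\}$ is dual to $\{\nphi_\alpha\}$. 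The one identification requiring care is $\For$: using the invariance \eqref{InnProinvos} of the pairing under $(\psi,\boldsymbol\psi)$ together with $\nel_\beta=\boldsymbol\psi(\nh_\beta)$ from \eqref{NSyminvos}, one computes $\langle\For_\alpha,\nel_\beta\rangle=\langle\psi(M_\alpha),\boldsymbol\psi(\nh_\beta)\rangle=\langle M_\alpha,\nh_\beta\rangle=\mathbbm{1}_{\alpha=\beta}$, so $\{\For_\alpha\}$ is dual to $\{\nel_\alpha\}$.

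Then I would apply the transpose principle line by line. Line \eqref{MFCOB} is the transpose of \eqref{hrCOB} (using $\nh\leftrightarrow M$ and $\nr\leftrightarrow F$); line \eqref{FForCOB} is the transpose of \eqref{erCOB} (using $\nel\leftrightarrow\For$ and $\nr\leftrightarrow F$), the index operations $r,t$ being inherited verbatim; and line \eqref{MForCOB} transposes \eqref{ehCOB}, where $|\alpha|=|\beta|=n$ makes the conventions $(-1)^{|\alpha|-\ell(\beta)}$ and $(-1)^{\ell(\alpha)-|\beta|}$ agree. For the power-sum lines the dual basis carries the scalar $z_\alpha$: transposing the $\nh$-to-$\npsi$ expansion of \eqref{hPsiCOB} gives $\psi_\alpha/z_\alpha=\sum_{\beta\succeq\alpha}\pi_u(\alpha,\beta)^{-1}M_\beta$, which is \eqref{MPsiCOB}, and likewise \eqref{hPhiCOB}, \eqref{ePsiCOB}, \eqref{ePhiCOB} transpose to \eqref{MPhiCOB}, \eqref{ForPsiCOB}, \eqref{ForPhiCOB}. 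The main obstacle is purely bookkeeping: keeping the statistics $\pi_u(\cdot,\cdot)$, $\operatorname{sp}(\cdot,\cdot)$, $\operatorname{lp}(\cdot,\cdot)$, $\ell(\cdot,\cdot)$ in the correct (swapped) argument order after transposition, and tracking where the factor $z_\alpha$ lands, namely on the left when a power-sum basis is being expanded and as $z_\beta^{-1}$ inside the sum when a power-sum basis appears on the right. Checking that each resulting sign and argument order matches the stated equations is the only real work, and it follows mechanically once the five dualities above are fixed.
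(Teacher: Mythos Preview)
Your proposal is correct and is exactly the approach the paper takes: the paper states only that the corollary follows ``by duality'' from Theorem~\ref{thm:nsymCOBeqns}, and you have spelled out that duality argument in full, including the transpose principle and the identification of the five dual pairs. Your derivation of $\langle\For_\alpha,\nel_\beta\rangle=\mathbbm{1}_{\alpha=\beta}$ from \eqref{InnProinvos} and \eqref{NSyminvos} is in fact more explicit than anything the paper supplies (the paper's prose just before the corollary even hedges on this point), and it is precisely what is needed to make lines \eqref{FForCOB}, \eqref{MForCOB}, \eqref{ForPsiCOB}, \eqref{ForPhiCOB} go through.
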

  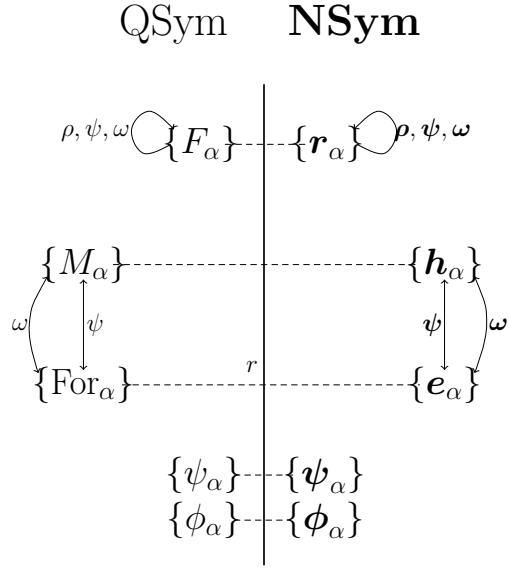
\begin{figure}[ht]
    $$\begin{tikzpicture}
    \useasboundingbox (-3.5,-3) rectangle (3.5,4.5);
      \scope[transform canvas={scale=.8}]
\node at (-1.05,3) {\Large{$\{F_\alpha\}$}};
\node at (1.05,3) {\Large{$\{\boldsymbol{r}_\alpha\}$}};
\node at (3,1) {\Large{$\{\boldsymbol{h}_\alpha\}$}};
\node at (-3,1) {\Large{$\{M_\alpha\}$}};
\node at (1,-2.5) {\Large{$\{\boldsymbol{\psi}_\alpha\}$}};
\node at (1,-3.25) {\Large{$\{\boldsymbol{\phi}_\alpha\}$}};
\node at (-1,-2.5) {\Large{$\{\psi_\alpha\}$}};
\node at (-1,-3.25) {\Large{$\{\phi_\alpha\}$}};
\node at (-3,-1) {\Large{$\{\For_\alpha\}$}};
\node at (-.2,-.7) {$r$};
\node at (3,-1) {\Large{$\{\boldsymbol{e}_\alpha\}$}};
\node at (-2.8,0) {$\psi$};
\node at (2.8,0) {$\boldsymbol{\psi}$};
\node at (-4.05,0) {$\omega$};
\node at (3.9,0) {$\boldsymbol{\omega}$};
\node at (-2.8,3.25) {$\rho,\psi,\omega$};
\node at (2.8,3.25) {$\boldsymbol{\rho},\boldsymbol{\psi},\boldsymbol{\omega}$};
\node at (-1.5,5) {\LARGE{QSym}};
\node at (1.5,5) {\LARGE{\textbf{NSym}}};

\draw [thick] (0,4)--(0,-4);
\draw [densely dashed] (-2.51,1)--(2.55,1);
\draw [densely dashed] (-2.36,-1)--(2.59,-1);
\draw [densely dashed](-0.65,3)--(0.7,3);
\draw [densely dashed](-0.5,-3.25)--(0.45,-3.25);
\draw [densely dashed](-0.5,-2.5)--(0.5,-2.5);

\draw [-latex][->] plot[smooth, tension=.7] coordinates {(-1.55,3) (-2,2.85)(-2.2,3.25)(-1.9,3.55)(-1.5,3.25)};
\draw [-latex][->] plot[smooth, tension=.7] coordinates {(1.55,3) (2,2.85)(2.2,3.25)(1.9,3.55)(1.5,3.25)};

\draw [-latex][<->] plot[smooth, tension=.7] coordinates {(-3,0.75)(-3,-0.75)};
\draw [-latex][<->] plot[smooth, tension=.7] coordinates {(3,0.75)(3,-0.75)};
\draw [-latex][<->] plot[smooth, tension=.7] coordinates {(-3.6,0.8)(-3.9,0)(-3.75,-0.8)};
\draw [-latex][<->] plot[smooth, tension=.7] coordinates {(3.5,0.8)(3.7,0)(3.5,-0.8)};
\endscope
\end{tikzpicture}$$
    \caption{\hspace{0.1cm}Automorphisms and Duality of $\textnormal{QSym}$ and $\textbf{NSym}$}
    \label{QNDuality}
\end{figure}
\noindent Figure \ref{QNDuality}, on the next page, gives a diagram analogous to Figure \ref{Duality of Sym} depicting the results from this section.  Horizontal dashed segments indicate duality once again, unless marked by an `r' to denote reversing the order.  Note the vertical edges labeled $\psi$ and $\boldsymbol{\psi}$ indicate that a basis element indexed by $\alpha$ is sent to its counterpart (also indexed by $\alpha$) in the other basis.  All other edges, outside set braces, indicate basis elements are most often not sent to their exact counterparts in the image set.  Suppressed are four loops that would be labeled with either $\rho$ or $\boldsymbol{\rho}$, as well as the combinatorial coefficients $z_\alpha$ at the bottom, for readability.

We end by discussing two ways in which a number of the change-of-basis results in $\sym$ have been condensed, and show analogous results in the dual bases of $\qsym$ and $\nsym$.

\newpage

\section{Relations between transition matrices}
\noindent A well-known diagram in Macdonald \cite{Macdonald}, p.105, reproduced in Figure \ref{fig:symdiagram} (on the next page), summarizes the relationship between various transitions matrices in $\sym$.  Our next goal is to reproduce a similar diagram for $\qsym$ and $\nsym$.

\begin{notation}
Let $b = \{b_\beta\}_{\beta \in B}$ and $a = \{a_\alpha\}_{\alpha \in A}$ be bases of some vector space $V$ with ordered indexing sets $B$ and $A$, respectively.  Then the \textbf{change-of-basis matrix from $\{b_\beta\}_{\beta \in B}$ to $\{a_\alpha\}_{\alpha \in A}$} is the matrix $\text{M}(a,b)$ for which 
$$[a_\alpha]_{\alpha \in A} = \text{M}(a,b)[b_\beta]_{\beta \in B}.$$
Here, $[a_\alpha]_{\alpha \in A}$ and $[b_\beta]_{\beta \in B}$ denote the column vectors of all the basis elements $a_\alpha$ and $b_\beta$, respectively.
\end{notation}
\noindent Let $n$ be a positive integer and let $p(n)$ be the number of integer partitions of $n$.  Then, in the context of $\sym^n$, Macdonald defines the following $p(n) \times p(n)$ matrices:
\begin{itemize}
    \item $K=M(s,m)=[K_{\lambda,\mu}]_{\lambda,\mu}$, where $K_{\lambda, \mu}$ is the \textbf{Kostka number} $K_{\lambda,\mu}$, the number of semi-standard Young tableaux of shape $\lambda \vdash n$ and type $\mu$;
    \item $J = \left(\mathbbm{1}_{\lambda = \mu^t} \right)_{\mu,\lambda}$;
    \item $z = (z_\lambda \cdot \mathbbm{1}_{\lambda = \mu})_{\mu,\lambda}$;
    \item $L =L(p,m)=[L_{\lambda,\mu}]_{\lambda,\mu}$, where $L_{\lambda,\mu} = \left|\left\{f:[\ell(\lambda)] \rightarrow \mathbb{Z}^+ ~\Big|~ \mu = \Big(\sum_{f(j) = i} \lambda_j\Big)_{i=1}^\infty \right\}\right|_.$
    (See the language of (6.9) in \cite{Macdonald}.)
\end{itemize}

\noindent 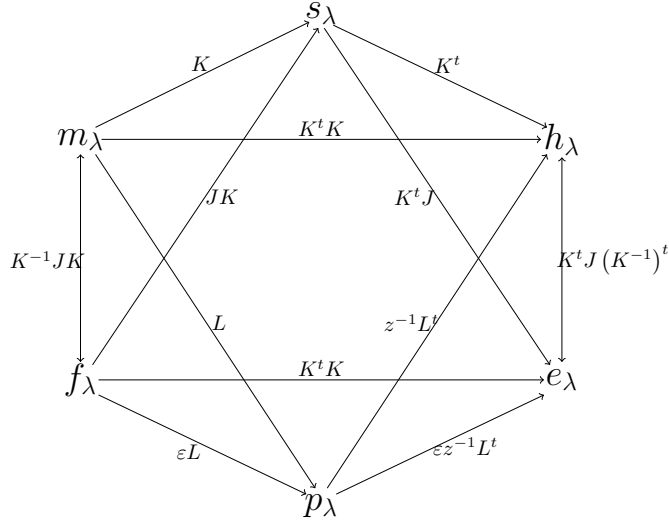
\begin{figure}
$$\begin{tikzpicture}
    \useasboundingbox (-4,-3.5) rectangle (4.5,3.5);
      \scope[transform canvas={scale=.8}]
\node at (0,4.1) {\Large{$s_\lambda$}};
\node at (-4,2) {\Large{$m_\lambda$}};
\node at (-4,-2) {\Large{$f_\lambda$}};
\node at (4,2) {\Large{$h_\lambda$}};
\node at (4,-2) {\Large{$e_\lambda$}};
\node at (0,-4.1) {\Large{$p_\lambda$}};

\node at (-4.65,0) {\footnotesize{$K^{-1}JK$}};
\node at (4.95,0) {\footnotesize{$K^tJ\left(K^{-1}\right)^t$}};
\node at (-2,3.25) {\footnotesize{$K$}};
\node at (2.1,3.25) {\footnotesize{$K^t$}};

\draw [-latex][->] plot[smooth, tension=.7] coordinates {(0.2,3.9) (3.75,2.2)};
\draw [-latex][<-] plot[smooth, tension=.7] coordinates {(-0.2,3.95) (-3.75,2.2)};

\draw [-latex][->] plot[smooth, tension=.7] coordinates {(-3.65,2) (3.65,2)};
\node at (0,2.2) {\footnotesize{$K^tK$}};

\draw [-latex][<-] plot[smooth, tension=.7] coordinates {(-0.05,3.85) (-3.8,-1.75)};
\node at (-1.65,1.05) {\footnotesize{$JK$}};
\draw [-latex][->] plot[smooth, tension=.7] coordinates {(0.05,3.85) (3.8,-1.75)};
\node at (1.5,1.05) {\footnotesize{$K^tJ$}};

\draw [-latex][->] plot[smooth, tension=.7] coordinates {(-3.7,-2) (3.7,-2)};
\node at (0,-1.8) {\footnotesize{$K^tK$}};

\draw [-latex][->] plot[smooth, tension=.7] coordinates {(0.1,-3.8) (3.75,1.75)};
\node at (1.4,-1.05) {\footnotesize{$z^{-1}L^t$}};
\draw [-latex][<-] plot[smooth, tension=.7] coordinates {(-0.1,-3.8) (-3.75,1.75)};
\node at (-1.7,-1.05) {\footnotesize{$L$}};

\draw [-latex][->] plot[smooth, tension=.7] coordinates {(0.25,-3.9) (3.7,-2.25)};
\node at (2.6,-3.1) {\footnotesize{$\varepsilon z^{-1} L^t$}};
\draw [-latex][<-] plot[smooth, tension=.7] coordinates {(-0.25,-3.9) (-3.7,-2.25)};
\node at (-2.2,-3.2) {\footnotesize{$\varepsilon L$}};

\draw [-latex][<->] plot[smooth, tension=.7] coordinates {(-4,1.75) (-4,-1.7)};
\draw [-latex][<->] plot[smooth, tension=.7] coordinates {(4,1.7) (4,-1.7)};
\endscope
\end{tikzpicture} 
$$
\caption{~Matrix Change-of-Basis Expressions in $\sym$}\label{fig:symdiagram}
\end{figure}

\subsection{Change-of-Basis Expressions in \texorpdfstring{$\nsym$}{NSym} and \texorpdfstring{$\qsym$}{QSym}}
In this subsection, we generalize the results on matrix equations in \cite{Macdonald} to the spaces of $\textnormal{QSym}$ and $\textbf{NSym}$.  The proofs in this section are quite similar to the originals in \cite{Macdonald}, with only minor additional complexity from the multiple isometries and power sum bases in $\qsym$ and $\nsym$.

\noindent It is helpful to recall three well-known facts about general change of basis, and two additional facts  pertaining to our involutions:  
\begin{enumerate}[label=(\Roman*)]
    \item For any bases $\{a_\alpha\}$, $\{b_\alpha\}$, and $\{d_{\alpha}\}$ in a fixed vector space, $\text{M}(d,b)\text{M}(b,a) = \text{M}(d,a)$;
    \item For any bases $\{a_\alpha\}$, and $\{b_\alpha\}$, $\text{M}(b,a) = \text{M}(a,b)^{-1}$.
\item For any bases $\{A_\alpha\}, \{B_\alpha\} \subset \textnormal{QSym}$, and their respective dual bases $\{\boldsymbol{a}_\alpha\}, \{\boldsymbol{b}_\alpha\} \subset \textbf{NSym}$, $\text{M}(A,B) = \text{M}(\boldsymbol{b},\boldsymbol{a})^t.$
 \item For any bases $\{A_\alpha\}, \{B_\alpha\} \subset \textnormal{QSym}$, $\text{M}(A,B) = \text{M}(\psi(A),\psi(B)) = \text{M}(\rho(A),\rho(B)) = \text{M}(\omega(A),\omega(B));$ 
    \item  For any bases $\{\boldsymbol{a}_\alpha\}, \{\boldsymbol{b}_\alpha\} \subset \textbf{NSym}$$, \text{M}(\boldsymbol{a},\boldsymbol{b}) = \text{M}(\boldsymbol{\psi}(\boldsymbol{a}),\boldsymbol{\psi}(\boldsymbol{b})) = \text{M}(\boldsymbol{\rho}(\boldsymbol{a}),\boldsymbol{\rho}(\boldsymbol{b})) = \text{M}(\boldsymbol{\omega}(\boldsymbol{a}),\boldsymbol{\omega}(\boldsymbol{b})).$
\end{enumerate}

\noindent With these at our disposal, we imitate the approach from Chapter 6 of \cite{Macdonald}.

\noindent In the classical case, the matrix of Kostka numbers is defined, $K = \text{M}(s,m)$.  Now let $n$ be a positive integer and define the quasisymmetric analogue to $K$ indexed by the $2^{n-1} \times 2^{n-1}$ strong compositions of $n$ (see (\ref{MFCOB})),
\begin{align}\mathcal{K} = \text{M}(F,M) = \left(\mathbbm{1}_{\beta \preceq \alpha}\right)_{\alpha,\beta},\label{FM}\end{align}
where $\{F_\alpha\}$ and $\{M_\alpha\}$ are the Gessel Fundamental and monomial quasisymmetric function bases, respectively.  Reusing the same labels, similarly define the analogous matrices
\begin{itemize}
    \item $\varepsilon = (\varepsilon(\alpha) \cdot \mathbbm{1}_{\alpha = \beta})_{\alpha,\beta} =\left((-1)^{|\alpha|-\ell(\alpha)} \cdot \mathbbm{1}_{\alpha = \beta} \right)_{\alpha,\beta}$ and
    \item $z = (z_\alpha \cdot \mathbbm{1}_{\alpha = \beta})_{\alpha,\beta}$.
\end{itemize}

\noindent Then, by (II) and (III), and from lines (\ref{hrCOB}) and (\ref{MFCOB}), we immediately have the following change-of-basis matrices as well.
\begin{align}\mathcal{K}^{-1} &= \text{M}(M,F) = \big((-1)^{\ell(\beta) - \ell(\alpha)} \cdot \mathbbm{1}_{\beta \preceq \alpha}\big)_{\alpha,\beta}=\varepsilon \mathcal{K}\varepsilon\label{MF}\\
(\mathcal{K}^t)^{-1}& = \text{M}(\boldsymbol{r},\boldsymbol{h}) = \big((-1)^{\ell(\alpha) - \ell(\beta)}\cdot \mathbbm{1}_{\alpha \preceq \beta}\big)_{\alpha,\beta}=\varepsilon \mathcal{K}^t\varepsilon\label{rh}\\
\mathcal{K}^t& = \text{M}(\boldsymbol{h},\boldsymbol{r}) = (\mathbbm{1}_{\alpha \succeq \beta})_{\alpha,\beta}
\label{hr}\end{align}
Here, $\{\boldsymbol{r}_\alpha\}$ and $\{\boldsymbol{h}_\alpha\}$ are the noncommutative ribbon and noncommutative complete homogeneous symmetric function bases of $\textbf{NSym}$, respectively.  Note that in contrast to $\sym$, we do not need to use a separate matrix $\mathcal{K}^{-1}$ (and note that $\varepsilon^{-1} = \varepsilon$).

\begin{remark*} There are several change-of-basis matrices in $\sym$ that do not make sense with the defined bases for $\nsym$ and $\qsym$. 
 For example $\text{M}(h,m)$ is well defined, but  $\{\nh_\alpha\}_{\alpha\modelsstrong n}$ and $\{M_\alpha\}_{\alpha\modelsstrong n}$ are in different spaces and thus there is no analogous matrix here.  In principle, one could define a new basis based on these change-of-basis results; for example, we could define $\{\boldsymbol{m}_\alpha\}_{\alpha \modelsstrong n} \subset \nsym$ so that $\text{M}(\boldsymbol{m},\nh)=\mathcal{K}^t\mathcal{K}$.  The result is consistent (so we can define $\boldsymbol{m}$ by its relation to any of the other bases and get the same basis), but a bit algebraically uninteresting. Because of duality, this definition would yield $$\boldsymbol{m}_\alpha= \sum_{\beta \preceq \alpha} (-1)^{\ell(\beta)-\ell(\alpha)} \nr_\beta,$$ which is not far from $$M_\alpha = \sum_{\beta \preceq \alpha} (-1)^{\ell(\beta)-\ell(\alpha)} F_\beta.$$  In some cases, where the hypothetical resulting basis is either $\nr$- (or $F$)-positive, it would encode some representation via one of the two Frobenius maps of Krob and Thibon in \cite{NSYMIV}, which encode modules of the type A $0$-Hecke Algebra.  See \cite{McCloskey} for additional details.   
\end{remark*}

\begin{remark*} Once again, our definition of $\boldsymbol{m}_\alpha$ does not depend on which basis we ``start from'' in $\sym$, as indicated by the three equalities above.
\end{remark*}

\noindent To continue, we would like to imitate the usage of the matrix $J$ from Chapter 6 of \cite{Macdonald}.  With three analogs to the involution $\omega$ to choose from, there are three natural generalizations in each of $\qsym$ and $\nsym$.

\begin{definition}
Let $J_f = (J_f)_{\alpha,\beta} = (\mathbbm{1}_{f(F_\alpha) = F_\beta})_{\alpha,\beta}$ for $f = \psi, \rho$, or $\omega$.
\end{definition}

\begin{remark*} Equivalently, $J_f = (J_f)_{\alpha,\beta} = (\mathbbm{1}_{f(\boldsymbol{r}_\alpha) = \boldsymbol{r}_\beta})_{\alpha,\beta}$ for any of $f = \boldsymbol{\psi}, \boldsymbol{\rho},$ or $\boldsymbol{\omega}$.  Furthermore, since all of the maps $f$ under consideration are involutions, it follows that $J_f^2$ is the identity matrix, and thus $J_f^{-1} = J_f$, just as we saw in the classical case.  Since $J_f$ is a permutation matrix, it must also be orthogonal, and thus its own transpose, $J_f^t = J_f$.
\end{remark*}

\noindent As we alluded to when defining the forgotten quasisymmetric functions, it turns out most useful to us will be $J_\psi$, corresponding to the involutions $\psi: \textnormal{QSym} \rightarrow \textnormal{QSym}$ and $\boldsymbol{\psi}:\textbf{NSym} \rightarrow \textbf{NSym}$, which respectively preserve the indexing on the relevant bases (see Figure 3.1).  In fact, if we choose to order the integer compositions which index these matrices $J$ with any ordering consistent with reverse lexicographic ordering, the matrix $J_\psi$ has the particularly simple form with $1$s on the anti diagonal and $0$s elsewhere.

Our first result shows that each of the remaining permissible relations from the table on page 101 of \cite{Macdonald} generalize appropriately to the spaces of $\textnormal{QSym}$ (or $\textbf{NSym}$), with $J_\psi$ in place of $J$.

\begin{theorem} \label{genMac} We have the following change-of-basis relations.
\begin{enumerate}[label=(\roman*)]
\item $\text{M}(M,\For) = \text{M}(\For,M) = \varepsilon\mathcal{K}\varepsilon J_\psi \mathcal{K};$
\item $\text{M}(\For,F) =\varepsilon\mathcal{K}\varepsilon J_\psi;$
\item $\text{M}(F,\For) = J_\psi\mathcal{K};$
\item $\text{M}(\boldsymbol{h},\boldsymbol{e}) = \text{M}(\boldsymbol{e},\boldsymbol{h}) = \mathcal{K}^tJ_{{\psi}}\varepsilon\mathcal{K}^t\varepsilon;$
\item $\text{M}(\boldsymbol{e},\boldsymbol{r}) = \mathcal{K}^tJ_{{\psi}};$
\item $\text{M}(\boldsymbol{r},\boldsymbol{e}) = J_{{\psi}}\varepsilon\mathcal{K}^t\varepsilon.$
\end{enumerate}
\end{theorem}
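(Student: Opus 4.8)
The plan is to derive all six identities from the three general change-of-basis facts (I)--(III), the involution invariance (IV)--(V), the already-established matrix identities $\mathcal{K}^{-1} = \varepsilon\mathcal{K}\varepsilon$ (line (\ref{MF})) and $(\mathcal{K}^t)^{-1} = \varepsilon\mathcal{K}^t\varepsilon$ (line (\ref{rh})), and the facts that $\varepsilon$ and $J_\psi$ are each their own inverse and transpose. The only computational input is a small lemma describing how $J_\psi$ enters: if a basis $\{b_\beta\}$ is permuted by an involution $g$ (so $g(b_\beta)=b_{\sigma(\beta)}$, with $J_g$ the corresponding permutation matrix), then for \emph{any} basis $\{a_\alpha\}$ one has $\text{M}(g(a),b)=\text{M}(a,b)J_g$. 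This follows by applying $g$ termwise to $a_\alpha=\sum_\gamma \text{M}(a,b)_{\alpha\gamma}b_\gamma$ and using $(J_g)_{\gamma\delta}=\mathbbm{1}_{g(b_\gamma)=b_\delta}$; note in particular that $g$ acts by right-multiplication by $J_g$.

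First I would dispatch the $\qsym$ statements. Since $\For_\alpha=\psi(M_\alpha)$ and $\psi$ permutes the fundamental basis ($\psi(F_\gamma)=F_{\gamma^c}$), the lemma with $g=\psi$, $b=F$, $a=M$ gives $\text{M}(\For,F)=\text{M}(M,F)J_\psi=\mathcal{K}^{-1}J_\psi=\varepsilon\mathcal{K}\varepsilon J_\psi$, which is (ii). Inverting via (II) and using $\varepsilon^2=I$, $J_\psi^2=I$, together with $(\varepsilon\mathcal{K}\varepsilon)^{-1}=\mathcal{K}$, yields $\text{M}(F,\For)=J_\psi\mathcal{K}$, which is (iii). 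Composing by (I) gives $\text{M}(M,\For)=\text{M}(M,F)\text{M}(F,\For)=\varepsilon\mathcal{K}\varepsilon J_\psi\mathcal{K}$, and $\text{M}(\For,M)=\text{M}(\For,F)\text{M}(F,M)$ produces the identical product; equality $\text{M}(M,\For)=\text{M}(\For,M)$ is in any case immediate from (IV), since $\psi$ interchanges the bases $\{M_\alpha\}$ and $\{\For_\alpha\}$. This is (i).

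The three $\nsym$ statements are the exact mirror, run with $\boldsymbol{\psi}$ for $\psi$, $\boldsymbol{r}$ for $F$, and $\boldsymbol{h}$ for $M$. Because $\boldsymbol{\psi}(\boldsymbol{h}_\alpha)=\boldsymbol{e}_\alpha$ (line (\ref{NSyminvos})) and $\boldsymbol{\psi}$ permutes $\{\boldsymbol{r}_\gamma\}$ by the same rule $\gamma\mapsto\gamma^c$ -- so that $J_\psi$ is literally the same matrix, as recorded in the remark following the definition of $J_f$ -- the lemma gives $\text{M}(\boldsymbol{e},\boldsymbol{r})=\text{M}(\boldsymbol{h},\boldsymbol{r})J_\psi=\mathcal{K}^tJ_\psi$, which is (v). Inverting gives $\text{M}(\boldsymbol{r},\boldsymbol{e})=J_\psi(\mathcal{K}^t)^{-1}=J_\psi\varepsilon\mathcal{K}^t\varepsilon$, which is (vi), and composing gives $\text{M}(\boldsymbol{h},\boldsymbol{e})=\mathcal{K}^tJ_\psi\varepsilon\mathcal{K}^t\varepsilon$, with $\text{M}(\boldsymbol{e},\boldsymbol{h})$ equal to it by (V); this is (iv). As a sanity check and an alternative route, each $\nsym$ identity also follows from its $\qsym$ partner by transposing under the duality (III), using the pairings $\{F\}\leftrightarrow\{\boldsymbol{r}\}$, $\{M\}\leftrightarrow\{\boldsymbol{h}\}$, $\{\For\}\leftrightarrow\{\boldsymbol{e}\}$, since $J_\psi^t=J_\psi$ and $\varepsilon^t=\varepsilon$.

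I expect no serious obstacle: the content is entirely the $J_\psi$-lemma plus matrix bookkeeping. The points demanding the most care are (a) checking that $\psi$ and $\boldsymbol{\psi}$ act by right-multiplication (not left) by $J_\psi$, and that this is the same $J_\psi$ for both spaces; and (b) confirming the two ``symmetric'' double equalities $\text{M}(M,\For)=\text{M}(\For,M)$ and $\text{M}(\boldsymbol{h},\boldsymbol{e})=\text{M}(\boldsymbol{e},\boldsymbol{h})$ -- each transparent from (IV)/(V) once one observes that the relevant involution swaps the two bases in question.
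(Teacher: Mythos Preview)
Your proposal is correct and follows essentially the same approach as the paper: both arguments introduce $J_\psi$ by exploiting that $\psi$ (resp.\ $\boldsymbol{\psi}$) swaps $M\leftrightarrow\For$ (resp.\ $\boldsymbol{h}\leftrightarrow\boldsymbol{e}$) while permuting $F$ (resp.\ $\boldsymbol{r}$), then obtain the remaining identities by inversion and composition. The only differences are cosmetic---you package the $J_\psi$ step as an explicit right-multiplication lemma and handle both spaces directly, whereas the paper starts on the $\nsym$ side (deriving (vi) first via $\text{M}(\boldsymbol{r},\boldsymbol{e})=\text{M}(\boldsymbol{\psi r},\boldsymbol{r})\text{M}(\boldsymbol{r},\boldsymbol{h})$) and then transfers to $\qsym$ by duality (III).
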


\begin{proof} Utilizing (\ref{rh}), (I), and (V), we can immediately establish \textit{(vi)}:
$$\text{M}(\boldsymbol{r},\boldsymbol{e}) = \text{M}(\boldsymbol{\psi} \boldsymbol{r},\boldsymbol{\psi} \boldsymbol{e}) = \text{M}(\boldsymbol{\psi} \boldsymbol{r}, \boldsymbol{h}) = \text{M}(\boldsymbol{\psi} \boldsymbol{r},\boldsymbol{r})\text{M}(\boldsymbol{r},\boldsymbol{h}) = J_\psi\varepsilon\mathcal{K}^t\varepsilon.$$
Then (II) gives \textit{(v)}:
$$\text{M}(\boldsymbol{e},\boldsymbol{r}) = \text{M}(\boldsymbol{r},\boldsymbol{e})^{-1} =  \left(J_\psi (\mathcal{K}^t)^{-1}\right)^{-1} = \mathcal{K}^tJ_{{\psi}}.$$

\noindent With (\ref{rh}), (\ref{hr}), \textit{(vi)}, and \textit{(v)}, we may use (I) again to establish \textit{(iv)}:
\begin{eqnarray*}
\text{M}(\boldsymbol{e},\boldsymbol{h}) &=& \text{M}(\boldsymbol{e},\boldsymbol{r})\text{M}(\boldsymbol{r},\boldsymbol{h})
\\ &=& \mathcal{K}^tJ_{{\psi}} (\mathcal{K}^t)^{-1}
\\ &=& \text{M}(\boldsymbol{h},\boldsymbol{r})\text{M}(\boldsymbol{r},\boldsymbol{e})
\\ &=& \text{M}(\boldsymbol{h},\boldsymbol{e}).
\end{eqnarray*}

 \noindent Recalling that $\langle F_\alpha, \boldsymbol{r}_\beta \rangle = \langle \For_\alpha, \boldsymbol{e}_\beta \rangle = \mathbbm{1}_{\alpha = \beta}$, (III) along with \textit{(v)} establish \textit{(iii)}:
 $$\text{M}(F,\For) = \text{M}(\textbf{e},\textbf{r})^t = (\mathcal{K}^tJ_\psi)^t = J_\psi \mathcal{K}.$$
 By (II) once again, \textit{(iii)}, and (\ref{hr}), we get \textit{(ii)}:
 $$\text{M}(\For,F) = \text{M}(F,\For)^{-1} = \left(J_\psi \mathcal{K}\right)^{-1} = \mathcal{K}^{-1}J_\psi = \varepsilon\mathcal{K}\epsilon J_\psi.$$

 \noindent Lastly, with (\ref{FM}), (\ref{MF}), and (I) once again, we finally establish \textit{(i)}:
 \begin{eqnarray*}
 \text{M}(\For,M) &=& \text{M}(\For,F)\text{M}(F,M)
 \\ &=& \mathcal{K}^{-1}J_\psi \mathcal{K}
 \\ &=& \text{M}(M,F)\text{M}(F,\For)
 \\ &=& \text{M}(M,\For).
 \end{eqnarray*}
 \end{proof}

\noindent These provide all of the change-of-basis matrices between all the bases considered thus far, excluding both kinds of power sums in each of $\textnormal{QSym}$ and $\textbf{NSym}$.  We now aim to generalize the results on the power sums in $\sym$.

\begin{definition} Let
 \begin{align}
 \mathcal{L}_\phi &= \text{M}(\phi,M); \label{L2}  \\
 \mathcal{L}_\psi &= \text{M}(\psi,M), \label{L1}
 \end{align}
where $\{\phi_\alpha\}$ and $\{\psi_\alpha\}$ are the quasisymmetric power sum bases of the second and first kinds, respectively. 
\end{definition}

\noindent

\noindent The results of the following lemma can be found in \cite{quasipower}, derived from \cite{non}, and can be seen from Theorem \ref{thm:inv_on_phi_psi} and duality.

\begin{lemma} In $\textnormal{QSym}$, for any $\alpha$, both $\psi(\nphi_\alpha) = \varepsilon(\alpha)\phi_\alpha$ and $\omega(\psi_\alpha) = \varepsilon(\alpha)\psi_{\alpha^r}$.
\end{lemma}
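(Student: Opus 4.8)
The plan is to obtain both identities as the $\qsym$-side shadows of the already-proven $\nsym$ identities in Theorem \ref{thm:inv_on_phi_psi}, transported across the duality pairing $\langle\cdot,\cdot\rangle:\qsym\times\nsym\to\mathbb{C}$. The two ingredients I would combine are: (a) the involution-compatibility of the pairing, namely $\langle f,\boldsymbol{g}\rangle=\langle\psi(f),\boldsymbol{\psi}(\boldsymbol{g})\rangle=\langle\omega(f),\boldsymbol{\omega}(\boldsymbol{g})\rangle$ from (\ref{InnProinvos}); and (b) the fact that the quasisymmetric power sums are dual to the noncommutative power sums up to the scalar $z_\alpha$, i.e.\ $\langle\phi_\alpha,\nphi_\beta\rangle=\langle\psi_\alpha,\npsi_\beta\rangle=z_\alpha\,\mathbbm{1}_{\alpha=\beta}$. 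I would also record the elementary facts $\varepsilon(\alpha^r)=\varepsilon(\alpha)$ and $z_{\alpha^r}=z_\alpha$, both immediate since reversal preserves length, size, and sort. (I read the first displayed identity as $\psi(\phi_\alpha)=\varepsilon(\alpha)\phi_\alpha$ in $\qsym$, since $\psi$ acts on $\qsym$ and $\phi_\alpha$ denotes the second-kind quasisymmetric power sum.)

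For the first identity, I would write $\psi(\phi_\alpha)=\sum_\gamma c_\gamma\phi_\gamma$ in the $\phi$-basis of $\qsym$ and extract the coefficients by pairing against $\nphi_\delta$: by (b) this gives $\langle\psi(\phi_\alpha),\nphi_\delta\rangle=c_\delta z_\delta$. On the other hand, since $\boldsymbol{\psi}$ is an involution (as $(\alpha^c)^c=\alpha$ forces $\boldsymbol{\psi}^2$ to fix the ribbon basis), I can take $\boldsymbol{g}=\boldsymbol{\psi}(\nphi_\delta)=\varepsilon(\delta)\nphi_\delta$ via Theorem \ref{thm:inv_on_phi_psi}, so that $\boldsymbol{\psi}(\boldsymbol{g})=\nphi_\delta$. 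Then (a) yields $\langle\psi(\phi_\alpha),\nphi_\delta\rangle=\langle\phi_\alpha,\varepsilon(\delta)\nphi_\delta\rangle=\varepsilon(\delta)z_\alpha\mathbbm{1}_{\alpha=\delta}$. Comparing the two evaluations forces $c_\delta=\varepsilon(\alpha)\mathbbm{1}_{\alpha=\delta}$, hence $\psi(\phi_\alpha)=\varepsilon(\alpha)\phi_\alpha$.

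The second identity runs identically with $\omega,\boldsymbol{\omega}$ in place of $\psi,\boldsymbol{\psi}$ and with the first-kind power sums. Expanding $\omega(\psi_\alpha)=\sum_\gamma d_\gamma\psi_\gamma$ and pairing against $\npsi_\delta$ gives $d_\delta z_\delta$ on one side; choosing $\boldsymbol{g}=\boldsymbol{\omega}(\npsi_\delta)=\varepsilon(\delta)\npsi_{\delta^r}$ (again using that $\boldsymbol{\omega}$ is an involution, since $(\alpha^t)^t=\alpha$) and applying (a) gives $\langle\psi_\alpha,\varepsilon(\delta)\npsi_{\delta^r}\rangle=\varepsilon(\delta)z_\alpha\mathbbm{1}_{\alpha=\delta^r}$. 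The Kronecker delta now selects $\delta=\alpha^r$, and after substituting $\varepsilon(\alpha^r)=\varepsilon(\alpha)$ and $z_{\alpha^r}=z_\alpha$ I read off $d_{\alpha^r}=\varepsilon(\alpha)$ with all other coefficients zero, i.e.\ $\omega(\psi_\alpha)=\varepsilon(\alpha)\psi_{\alpha^r}$.

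The only genuinely delicate point is the legitimacy of the substitution $\boldsymbol{g}=\boldsymbol{\psi}(\nphi_\delta)$ (respectively $\boldsymbol{\omega}(\npsi_\delta)$), which rests on $\boldsymbol{\psi}$ and $\boldsymbol{\omega}$ being honest involutions; I would justify this by checking that $\boldsymbol{\psi}^2$ and $\boldsymbol{\omega}^2$ fix the ribbon basis, the subtlety being that $\boldsymbol{\omega}$ is only an anti-automorphism yet still squares to the identity. The rest is bookkeeping: tracking the index reversal introduced by $\boldsymbol{\omega}$ correctly through the Kronecker delta, which is exactly where the invariances $\varepsilon(\alpha^r)=\varepsilon(\alpha)$ and $z_{\alpha^r}=z_\alpha$ are needed to cancel.
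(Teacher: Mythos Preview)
Your proof is correct and is precisely the approach the paper indicates: the lemma is stated as following from Theorem~\ref{thm:inv_on_phi_psi} and duality, and you have spelled out exactly that derivation by transporting the $\nsym$ identities across the pairing via (\ref{InnProinvos}) and the orthogonality $\langle\phi_\alpha,\nphi_\beta\rangle=\langle\psi_\alpha,\npsi_\beta\rangle=z_\alpha\mathbbm{1}_{\alpha=\beta}$. The paper does not write out the argument, so your version supplies the details the paper only gestures at.
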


\noindent The reversals on the bases appearing in the expansions (\ref{ePsiCOB}) and (\ref{ForPsiCOB}) will force $J_{{\rho}}$ to appear in several of the expressions involving the power sums of the first kind we give in the next theorem, adding some complexity that does not appear in the classical case.

\begin{theorem} In $\textnormal{QSym}$, $$\text{M}(\phi, \For) = \varepsilon \mathcal{L}_\phi \hspace{0.8cm} \text{ and } \hspace{0.4cm} \text{M}(\psi, \For) = \varepsilon \mathcal{L}_\psi J_\rho;$$
$$\text{M}(\phi, F) = \mathcal{L}_\phi \varepsilon\mathcal{K}\varepsilon \hspace{0.5cm} \text{ and } \hspace{0.5cm} \text{M}(\phi, F) = \mathcal{L}_\phi \varepsilon\mathcal{K}\varepsilon.$$

\noindent In $\textbf{NSym}$,
$$\text{M}(\boldsymbol{h},\boldsymbol{\phi}) = z^{-1}\mathcal{L}_\phi^t \hspace{0.5cm} \text{ and } \hspace{0.5cm} \text{M}(\boldsymbol{h}, \boldsymbol{\psi}) = z^{-1}\mathcal{L}_\psi^t;$$
$$\text{M}(\boldsymbol{e},\boldsymbol{\phi}) = \varepsilon z^{-1} \mathcal{L}_\phi^t \hspace{0.9cm} \text{ and } \hspace{0.5cm} \text{M}(\boldsymbol{e},\boldsymbol{\psi}) = \varepsilon z^{-1} J_{{\rho}} \mathcal{L}_\psi^t;$$
$$\text{M}(\boldsymbol{r},\boldsymbol{\phi}) = z^{-1}\mathcal{L}_\phi \varepsilon\mathcal{K}\varepsilon \hspace{0.5cm} \text{ and } \hspace{0.5cm} \text{M}(\boldsymbol{r},\boldsymbol{\psi}) = z^{-1}\mathcal{L}_\psi \varepsilon\mathcal{K}\varepsilon.$$
 \end{theorem}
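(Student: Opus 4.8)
The plan is to treat $\mathcal{L}_\phi = \text{M}(\phi,M)$ and $\mathcal{L}_\psi = \text{M}(\psi,M)$ as the two ``atomic'' matrices and to express each identity in the theorem as a product of these with the already-computed matrices $\mathcal{K}$, $\varepsilon$, $z$, and the permutation matrices $J_\psi, J_\rho$, exactly in the style of the proof of Theorem~\ref{genMac}. Beyond facts (I)--(V), the only new inputs are the two pairings $\langle\phi_\alpha,\nphi_\beta\rangle = \langle\psi_\alpha,\npsi_\beta\rangle = z_\alpha\mathbbm{1}_{\alpha=\beta}$ and the involution identities $\psi(\phi_\alpha) = \varepsilon(\alpha)\phi_\alpha$ and $\omega(\psi_\alpha) = \varepsilon(\alpha)\psi_{\alpha^r}$ from the preceding lemma. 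I would establish the $\qsym$ identities first and then transport each to $\nsym$ by duality (III), reading the diagonal factor $z$ off the pairings above.

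For the second-kind power sums the argument is clean. To compute $\text{M}(\phi,\For)$, apply $\psi$ to $\phi_\alpha = \sum_\beta \text{M}(\phi,\For)_{\alpha,\beta}\For_\beta$; since $\psi(\For_\beta) = \psi(\psi(M_\beta)) = M_\beta$ and $\psi(\phi_\alpha) = \varepsilon(\alpha)\phi_\alpha$, comparing with $\mathcal{L}_\phi = \text{M}(\phi,M)$ gives $\text{M}(\phi,\For) = \varepsilon\mathcal{L}_\phi$. Then $\text{M}(\phi,F) = \text{M}(\phi,M)\text{M}(M,F) = \mathcal{L}_\phi\,\varepsilon\mathcal{K}\varepsilon$ by (I) and (\ref{MF}). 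Dualizing by (III), with $\{\nphi_\alpha/z_\alpha\}$ the basis dual to $\{\phi_\alpha\}$ and with $\{\nh_\alpha\},\{\nel_\alpha\},\{\nr_\alpha\}$ dual to $\{M_\alpha\},\{\For_\alpha\},\{F_\alpha\}$, transfers these to the three $\nphi$-formulas in $\nsym$; the factor $z^{-1}$ enters precisely because the dual of $\nphi_\alpha$ carries the weight $z_\alpha$. I would cross-check each resulting matrix against the entrywise coefficients in lines (\ref{hPhiCOB}) and (\ref{ePhiCOB}) of Theorem~\ref{thm:nsymCOBeqns}.

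The first-kind power sums are where the genuine extra complexity lives, and I expect this to be the main obstacle. Here the relevant involution identity, $\omega(\psi_\alpha) = \varepsilon(\alpha)\psi_{\alpha^r}$, contains a reversal of the indexing composition, so the bookkeeping no longer stays diagonal: converting between the $M$- and $\For$-expansions of $\psi_\alpha$ introduces the reversal permutation, which is exactly the matrix $J_\rho$, and this is the source of the $J_\rho$ appearing in $\text{M}(\psi,\For)$ and in $\text{M}(\nel,\npsi)$. None of Macdonald's classical relations need such a factor, since there the single involution $\omega$ fixes each $p_\lambda$ up to a sign; the presence of three involutions and two reversed power-sum bases is what creates the discrepancy with the classical diagram.

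The delicate point throughout is that $z$, $\varepsilon$, and $J_\rho$ are not scalars and do not commute with the triangular matrices $\mathcal{K}$, $\mathcal{L}_\phi$, and $\mathcal{L}_\psi$, so their left/right placement (and whether a transpose is required) must be tracked carefully rather than assumed. The safest route is to pin down each such factor by comparing a single matrix entry against the explicit change-of-basis coefficients recorded in Theorem~\ref{thm:nsymCOBeqns} and Corollary~\ref{cor:qsymCOBeqns}, and then to let (I)--(V) propagate that placement through the remaining products.
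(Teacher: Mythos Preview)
Your proposal is correct and follows essentially the same route as the paper: derive the $\qsym$ identities first by applying the appropriate involution ($\psi$ for the second kind, $\omega$ for the first kind) together with facts (I) and (IV), and then transport everything to $\nsym$ via duality (III), with the factor $z^{-1}$ arising from the pairing $\langle \phi_\alpha,\nphi_\beta\rangle = \langle \psi_\alpha,\npsi_\beta\rangle = z_\alpha\mathbbm{1}_{\alpha=\beta}$. Your added step of cross-checking the placement of $\varepsilon$, $z$, and $J_\rho$ against the explicit entries in Theorem~\ref{thm:nsymCOBeqns} and Corollary~\ref{cor:qsymCOBeqns} is not in the paper's proof but is a prudent safeguard, especially for the first-kind formulas where the reversal bookkeeping is delicate.
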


\begin{proof}
By Lemma \ref{thm:inv_on_phi_psi} and (IV),
$$\text{M}(\phi,\For) = \text{M}(\psi(\phi),\psi(\For)) = \text{M}(\varepsilon\phi,M) = \varepsilon \cdot \text{M}(\phi,M) = \varepsilon \mathcal{L}_\phi.$$
Similarly, but also with use of (I) and (IV),
\begin{eqnarray*}
\text{M}(\psi,\For) &=& \text{M}(\psi,\omega(\For))\text{M}(\omega(\For),\For) 
\\ &=& \text{M}(\omega(\psi),\omega^2(\For))J_\rho 
\\ &=& \text{M}(\varepsilon\psi,\For)J_\rho 
\\ &=& \varepsilon \mathcal{L}_\psi J_\rho.
\end{eqnarray*}
By (I),
$$\text{M}(\phi,F) = \text{M}(\phi,M)\text{M}(M,F) = \mathcal{L}_\phi \mathcal{K}^{-1} = \mathcal{L}_\phi \varepsilon\mathcal{K}\varepsilon;$$
$$\text{M}(\psi,F) = \text{M}(\psi,M)\text{M}(M,F) = \mathcal{L}_\psi \mathcal{K}^{-1} = \mathcal{L}_\psi\varepsilon\mathcal{K}\varepsilon.$$

\noindent Next, by (III),
$$\text{M}(\boldsymbol{h},\boldsymbol{\phi}) = \text{M}(z^{-1}\phi,M)^t = z^{-1} \mathcal{L}_\phi^t;$$
$$\text{M}(\boldsymbol{h},\boldsymbol{\psi}) = \text{M}(z^{-1}\psi,M)^t = z^{-1} \mathcal{L}_\psi^t.$$

\noindent Similarly,
$$\text{M}(\boldsymbol{e},\boldsymbol{\phi}) = \text{M}(z^{-1}\phi,\For)^t = z^{-1}\varepsilon \mathcal{L}_\phi^t;$$
$$\text{M}(\boldsymbol{e},\boldsymbol{\psi}) = \text{M}(z^{-1}\psi,\For)^t = \varepsilon z^{-1} J_{{\rho}} \mathcal{L}_\psi^t.$$

\noindent Lastly, by (III) and (I),
$$\text{M}(\boldsymbol{r},\boldsymbol{\phi}) =  \text{M}(z^{-1}\phi,F)^t = z^{-1}(\text{M}(\phi,M)\text{M}(M,F))^t = z^{-1}(\mathcal{K}^t)^{-1}\mathcal{L}_\phi^t  = z^{-1}\varepsilon\mathcal{K}^t\varepsilon\mathcal{L}_\phi^t;$$
$$\text{M}(\boldsymbol{r},\boldsymbol{\psi}) =  \text{M}(z^{-1}\psi,F)^t = z^{-1}(\text{M}(\psi,M)\text{M}(M,F))^t = z^{-1}(\mathcal{K}^t)^{-1}\mathcal{L}_\psi^t = z^{-1}\varepsilon\mathcal{K}^t\varepsilon\mathcal{L}_\psi^t.$$
\end{proof}

\begin{remark*} We may also give the change-of-basis relations between the two kinds of power sums in both spaces as matrix products, but they would each involve either the inverse of $\mathcal{L}_\phi$ or $\mathcal{L}_\psi$.  For example, in $\textnormal{QSym}$,
$$\text{M}(\phi,\psi) = \text{M}(\phi,M)\text{M}(M,\psi) = \mathcal{L}_\phi \mathcal{L}_\psi^{-1}.$$
\end{remark*}

\noindent We provide a figure similar to Figure \ref{fig:symdiagram} depicting the results from this subsection.  Just as in Figure \ref{fig:symdiagram}, an arrow from an element from the basis $\{b_\alpha\}$ to $\{a_\alpha\}$ is labeled with $\text{M}(a,b)$.  We suppress several edges, including the repetitive ones to/from the power sums of the two kinds.  The hatted terms $\hat{J_\rho}$ along the bottom edges correspond only to the power sum bases of the first kind, $\psi$ and $\boldsymbol{\npsi}$.

\begin{figure}
    $$\begin{tikzpicture}
    \useasboundingbox (-4,-3) rectangle (4,4.5);
      \scope[transform canvas={scale=0.8}]
\node at (-1.85,3) {\Large{$F_\alpha$}};
\node at (1.85,3) {\Large{$\boldsymbol{r}_\alpha$}};
\node at (3.6,1) {\Large{$\boldsymbol{h}_\alpha$}};
\node at (-3.6,1) {\Large{$M_\alpha$}};
\node at (1.85,-3) {\Large{$\boldsymbol{\psi}_\alpha$ $\boldsymbol{\phi}_\alpha$}};
\node at (-1.85,-3) {\Large{$\psi_\alpha$ $ \phi_\alpha$}};
\node at (-3.6,-1) {\Large{$\For_\alpha$}};
\node at (3.6,-1) {\Large{$\boldsymbol{e}_\alpha$}};

\node at (-1.5,5) {\LARGE{QSym}};
\node at (1.5,5) {\LARGE{\textbf{NSym}}};

\draw [thick] (0,4)--(0,-4);

\draw [-latex][<-] plot[smooth, tension=.7] coordinates {(-2.05,-2.7)(-3.4,-1.25)};
\draw [-latex][->] plot[smooth, tension=.7] coordinates {(2.05,-2.7)(3.4,-1.25)};

\draw [-latex][<->] plot[smooth, tension=.7] coordinates {(-3.6,0.7)(-3.6,-0.7)};
\draw [-latex][<->] plot[smooth, tension=.7] coordinates {(3.6,0.7)(3.6,-0.7)};

\draw [-latex][<-] plot[smooth, tension=.7] coordinates {(-2,2.75)(-3.4,1.25)};
\draw [-latex][->] plot[smooth, tension=.7] coordinates {(2,2.75)(3.4,1.25)};

\draw [-latex][->] plot[smooth, tension=.7] coordinates {(-1.85,2.7)(-1.85,-2.7)};
\draw [-latex][<-] plot[smooth, tension=.7] coordinates {(1.85,2.7)(1.85,-2.7)};

\draw [-latex][->] plot[smooth, tension=.7] coordinates {(-3.35,0.75)(-1.95,-2.7)};
\draw [-latex][<-] plot[smooth, tension=.7] coordinates {(3.35,0.75)(1.95,-2.7)};

\node at (-3,1.95) {\footnotesize{$\mathcal{K}$}};
\node at (-2.7,-0.3) {\footnotesize{$\mathcal{L}$}};
\node at (-4.35,0) {\footnotesize{$\mathcal{K}^{-1}J_\psi \mathcal{K}$}};
\node at (-3.25,-2) {\footnotesize{$\varepsilon \mathcal{L} \hat{J_\rho}$}};
\node at (-1.35,0) {\footnotesize{$\mathcal{L}\mathcal{K}^{-1}$}};

\node at (3.1,1.95) {\footnotesize{$\mathcal{K}^t$}};
\node at (2.45,-0.4) {\footnotesize{$z^{-1}\mathcal{L}^t$}};
\node at (4.55,0) {\footnotesize{$\mathcal{K}^tJ_{{\psi}}(\mathcal{K}^t)^{-1}$}};
\node at (3.4,-2.1) {\footnotesize{$\varepsilon z^{-1}  \hat{J_{{\rho}}} \mathcal{L}^t$}};
\node at (1.05,0) {\footnotesize{$z^{-1}(\mathcal{K}^t)^{-1}\mathcal{L}_\psi^t$}};
\endscope
\end{tikzpicture}$$
    \caption{~Matrix Change-of-Basis Expressions in $\textnormal{QSym}$ and $\textbf{NSym}$}
    \label{QNCOB}
\end{figure}
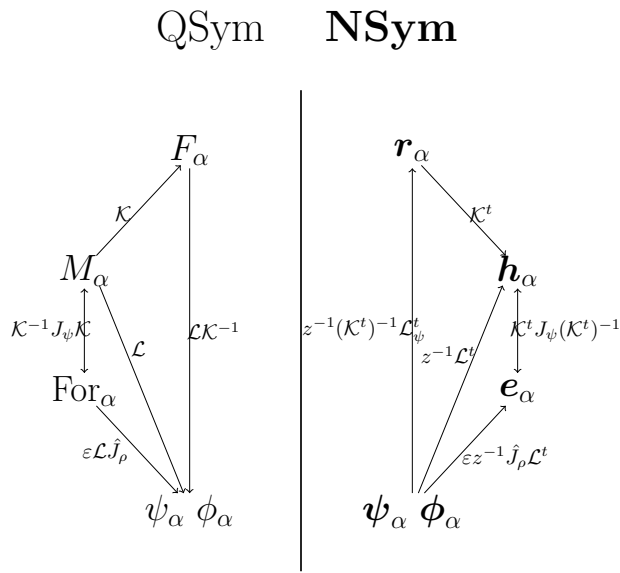

\section{Combinatorial Models for Change of Basis}
\noindent Many of the statistics occurring in the change-of-basis matrices in $\nsym$ are natural generalizations of the statistics on brick tabloids found in \cite{RemTrans,brick}, which give combinatorial descriptions of the transition matrices in $\sym$.  In this section we look at generalizing this work to change of basis in $\qsym$ and $\nsym$.

\subsection{Combinatorial Models in \texorpdfstring{$\sym$}{Sym}}
Before we summarize the results of E\u{g}ecio\u{g}lu and Remmel \cite{brick}, we note that we have taken the liberty of adjusting some of the authors' notation and conventions.  As examples, they choose integer partitions to be written in weakly \textit{increasing} order, and their change-of-basis matrices multiply on the right of row vectors; both of these conventions are opposite of the presentation in this work. 
 Our goal is to generalize the following theorem, which gives a unified combinatorial model for most of the change-of-basis matrices in $\sym$ discussed above.  Let $n$ be a positive integer.
\begin{theorem}[E\u{g}ecio\u{g}lu and Remmel, \cite{brick}]\label{thm:brick}  For $\lambda\vdash n$,
\begin{align} 
    e_\lambda &= \sum_{T \in B_\lambda} (-1)^{|\lambda| - \ell(\operatorname{type}(T))} h_{\operatorname{type(T)}} 
    \vspace{0.2cm}\label{eq:ehbrick}
    \\h_\lambda &= \sum_{T \in B_\lambda} (-1)^{|\lambda| - \ell(\operatorname{type}(T))} e_{\operatorname{type}(T)}
    \\ 
    m_\lambda &= \sum_{T \in B^\lambda} (-1)^{|\operatorname{shape}(T)| - \ell(\lambda)}f_{\operatorname{shape}(T)}
    \vspace{0.2cm}
    \\f_\lambda &= \sum_{T \in B^\lambda} (-1)^{|\operatorname{shape}(T)| - \ell(\lambda)}m_{\operatorname{shape}(T)}
    \\ 
    p_\lambda &= \sum_{T \in B_\lambda} (-1)^{|\lambda| - \ell(\operatorname{type}(T))}w(B_\lambda^{\operatorname{type}(T)})e_{\operatorname{type}(T)}
    \vspace{0.2cm}\label{eq:brickpe}
    \\ 
    p_\lambda &= \sum_{T \in B_\lambda} (-1)^{\ell(\lambda) - \ell(\operatorname{type}(T))}w(B_\lambda^{\operatorname{type}(T)})h_{\operatorname{type}(T)}
    \vspace{0.2cm}
    \\ 
    f_\lambda &= \sum_{T \in B^\lambda} (-1)^{|\lambda| - \ell(\operatorname{type}(T))}\frac{w(B_{\operatorname{shape}(T)}^\lambda)}{z_{\operatorname{shape}(T)}}p_{\operatorname{shape}(T)}
    \vspace{0.2cm}
    \\ 
    m_\lambda &= \sum_{T \in B_\lambda} (-1)^{\ell(\lambda) - \ell(\operatorname{type}(T))}\frac{w(B_\lambda^{\operatorname{type}(T)})}{z_{\operatorname{type}(T)}}p_{\operatorname{type}(T)}
    \vspace{0.2cm}
    \\ 
    p_\lambda &= \sum_{T \in B^\lambda} |OB_{\operatorname{shape}(T)}^\lambda|m_{\operatorname{shape}(T)}
    \vspace{0.2cm}\label{eq:ptombrick}
    \\ 
    p_\lambda &= \sum_{T \in B^\lambda} (-1)^{|\operatorname{type}(T)| - \ell(\lambda)}|OB_{\operatorname{shape}(T)}^\lambda|f_{\operatorname{shape}(T)}
    \vspace{0.2cm}
    \\ 
    h_\lambda &= \sum_{T \in B_\lambda} \frac{|OB_\lambda^{\operatorname{type}(T)}|}{z_{\operatorname{type}(T)}}p_{\operatorname{type(T)}}
    \vspace{0.2cm}
    \\ 
    e_\lambda &= \sum_{T \in B_\lambda} (-1)^{|\lambda| - \ell(\operatorname{type}(T))}\frac{|OB_\lambda^{\operatorname{type}(T)}|}{z_{\operatorname{type}(T)}}p_{\operatorname{type(T)}}
\end{align}
\end{theorem}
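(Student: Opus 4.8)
The plan is to derive all twelve expansions from the fundamental generating-series identities among the bases and then to recognize each resulting signed, weighted sum over \emph{compositions} as a sum over \emph{brick tabloids}. The one fact doing all the bookkeeping is that $e$'s, $h$'s, and $p$'s commute, so any expansion indexed by a composition $\alpha$ collapses onto $\operatorname{sort}(\alpha)$, and the number of compositions sorting to a fixed $\mu$ equals the number of ways to tile a single row of length $|\mu|$ by bricks whose lengths form the multiset $\mu$. This is the combinatorial hinge that turns ``sum over compositions'' into ``sum over brick tabloids,'' and passing from one part to a partition shape $\lambda$ amounts to tiling the $\ell(\lambda)$ rows independently, the overall $\operatorname{type}$ being the union of the row types.

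I would first dispatch the elementary/complete pair, line (\ref{eq:ehbrick}) and the one below it. Extracting the coefficient of $t^n$ in $H(t)E(-t)=1$ gives the recursion $h_n=\sum_{k=1}^{n}(-1)^{k-1}e_k h_{n-k}$; iterating to termination yields $h_n=\sum_{\alpha\models n}(-1)^{n-\ell(\alpha)}e_\alpha$. Collecting by partition type realizes the coefficient of $e_\mu$ as the number of single-row brick tabloids of type $\mu$, and multiplying the single-row expansions of $h_{\lambda_1},\dots,h_{\lambda_{\ell(\lambda)}}$ identifies a shape-$\lambda$ brick tabloid with an independent tiling of each row. This is the $h$-to-$e$ formula; the $e$-to-$h$ formula is its image under $\omega$, using $\omega(e_\mu)=h_\mu$. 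The monomial/forgotten pair runs through the dual (minimality) tabloids $B^\lambda$ with the statistic $\operatorname{shape}$, and its two directions are again exchanged by $\omega$ since $f_\lambda=\omega(m_\lambda)$.

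The power-sum lines are where the weights $w$ and the ordered-brick counts $|OB|$ enter. For $p$ in terms of $h$ I would start from $H'(t)=H(t)P(t)$, equivalently Newton's identity $n\,h_n=\sum_{k=1}^{n}p_k h_{n-k}$, solve for $p_n$, and iterate; the differentiation factor $k$ attaches to the distinguished (final) brick of each row, so the coefficient of $h_\mu$ becomes the product over rows of the last-brick length summed over tabloids, i.e.\ exactly $w(B_{(n)}^{\mu})$. A one-line averaging check, that the mean last part over all orderings of $\mu$ is $|\mu|/\ell(\mu)$, confirms agreement with the closed form $|\mu|\,(\ell(\mu)-1)!/\prod_i m_i(\mu)!$. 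The $p$-to-$e$ line is then the $\omega$-image with the scalar $(-1)^{|\lambda|-\ell(\lambda)}$ absorbed into the sign. For the reverse directions, $h_\lambda$ and $e_\lambda$ in $p$, I would expand $H(t)=\exp\bigl(\sum_k \tfrac{p_k}{k}t^k\bigr)$; reading off the coefficient of $p_\mu$ produces $z_\mu^{-1}$ times the number of ways to distribute the parts of $\mu$ among the rows of $\lambda$ respecting row sizes, which is precisely $|OB_\lambda^\mu|$. The two expansions of $p_\lambda$ in $m$ and in $f$ are then the Hall-duals of these (via $\langle m_\lambda,h_\mu\rangle=\mathbbm{1}_{\lambda=\mu}$ and $\langle p_\lambda,p_\mu\rangle=z_\lambda\mathbbm{1}_{\lambda=\mu}$), which is exactly why the $z$-normalization disappears in the $p$-to-$m$ and $p$-to-$f$ lines.

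The main obstacle is not the $e/h$ and $m/f$ pairs, which are clean two-term recursions, but pinning down the two power-sum weights and checking that the iterated weighted recursions reproduce $w(B_\lambda^\mu)$ and $|OB_\lambda^\mu|$ with every sign and every $z$-normalization intact. Concretely, I would verify the single-row base cases against $h_n=\sum_{\mu}z_\mu^{-1}p_\mu$ and $p_n=\sum_\mu(-1)^{\ell(\mu)-1}\,|\mu|\,\tfrac{(\ell(\mu)-1)!}{\prod_i m_i(\mu)!}\,h_\mu$, and then confirm that the passage from one row to a partition shape is genuinely multiplicative for $w$ but only multiplicative up to the $z$-bookkeeping for $|OB|$, as encoded by $|OB_\lambda^\mu|=z_\mu\sum_{\mu=\uplus\nu^{(i)}}\prod_i z_{\nu^{(i)}}^{-1}$. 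Once these two weighted identities are in hand, the remaining lines follow for free by $\omega$-duality, matrix inversion, and Hall-inner-product duality, so the twelve equations reduce to essentially three genuinely independent computations.
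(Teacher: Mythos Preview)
The paper does not prove this theorem at all: Theorem~\ref{thm:brick} is stated as a quotation of E\u{g}ecio\u{g}lu and Remmel's result from \cite{brick}, accompanied only by the definitions of brick tabloids, the weight $w$, and ordered brick tabloids, plus a few worked examples. It serves purely as motivation for the paper's own generalization to walls in $\qsym$ and $\nsym$. So there is no ``paper's own proof'' to compare your proposal against.

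Your plan is a reasonable outline of how one would prove the theorem from scratch, and it is in the same spirit as the arguments in the original E\u{g}ecio\u{g}lu--Remmel paper: start from the generating-series relations $H(t)E(-t)=1$, $H'(t)=H(t)P(t)$, and $H(t)=\exp\bigl(\sum_k p_kt^k/k\bigr)$; iterate to get sums over compositions; collapse compositions onto partitions using commutativity; and identify the multiplicities as brick-tabloid counts, with the differentiated factor giving the last-part weight $w$ and the exponential bookkeeping giving $|OB|/z$. The reductions via $\omega$ and Hall duality are also standard. As a proof \emph{plan} it is sound; as a proof it would still need the explicit iteration for $p_n$ in terms of $h$'s written out (to see that the sign and the last-brick weight emerge as claimed), and the verification that your formula $|OB_\lambda^\mu|=z_\mu\sum_{\mu=\uplus\nu^{(i)}}\prod_i z_{\nu^{(i)}}^{-1}$ matches the increasing-index definition of ordered brick tabloids. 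Those are the two places where an actual argument, rather than an assertion, is required.
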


\noindent To understand the theorem, the following definitions are necessary.  
\begin{definition}[E\u{g}ecio\u{g}lu and Remmel \cite{brick}]
A \textbf{brick} $b$ of length $k$ is a horizontal strip of $k$ boxes (a Young diagram of shape $(k)$).  If $\mu = (\mu_1,\mu_2,...,\mu_{\ell(\mu)}) \vdash n$, \textbf{associate the set of bricks $\{b_1,b_2,...,b_{\ell(\mu)}\}$ with $\mu$} if brick $|b_i| = \mu_i$ for each $i \in [\ell(\mu)]$.  Then, $T$ is a \textbf{$\mu$-brick tabloid of shape $\lambda$} if $T$ gives a filling of the Young diagram of shape $\lambda \vdash n$ with the set of bricks associated to $\mu$ such that
\\(i) each brick $b_i$ covers exactly $\mu_i$ boxes in a single row of the diagram of shape $\lambda$;
\\(ii) no two bricks overlap.

Say that $B_\lambda$ is the set of all possible brick tabloids of shape $\lambda$, $B^\mu$ is the set of all possible $\mu$-brick tabloids (with \textbf{type} $\mu$), and let $B_\lambda^\mu$ denote the set of all $\mu$-brick tabloids of shape $\lambda$.  
\end{definition}
\begin{note*} In the above definition, it is important to note that bricks of the same size are indistinguishable.
\end{note*}

\begin{example} \label{brtex} Below are the eight $(3,3,2,1)$-brick tabloids of shape $(6,3)$.
$$\begin{tikzpicture}
\draw  (0,0) rectangle (.5,.5);
\draw  (.5,0) rectangle (1,.5); 
\draw  (1,0) rectangle (1.5,.5);
\draw  (1.5,0) rectangle (2,.5);
\draw  (2,0) rectangle (2.5,.5);
\draw  (2.5,0) rectangle (3,.5);
\draw  (0,.5) rectangle (.5,1);
\draw  (.5,.5) rectangle (1,1);
\draw  (1,.5) rectangle (1.5, 1);
\draw  (.125, 0.125) rectangle (1.375, .375);
\draw  (1.625, 0.125) rectangle (2.875, .375);
\draw  (.125, .625) rectangle (.375, .875);
\draw  (.625, .625) rectangle (1.375, .875);
\end{tikzpicture}
\hspace{0.5cm}
\begin{tikzpicture}
\draw  (0,0) rectangle (.5,.5);
\draw  (.5,0) rectangle (1,.5); 
\draw  (1,0) rectangle (1.5,.5);
\draw  (1.5,0) rectangle (2,.5);
\draw  (2,0) rectangle (2.5,.5);
\draw  (2.5,0) rectangle (3,.5);
\draw  (0,.5) rectangle (.5,1);
\draw  (.5,.5) rectangle (1,1);
\draw  (1,.5) rectangle (1.5, 1);
\draw  (.125, 0.125) rectangle (1.375, .375);
\draw  (1.625, 0.125) rectangle (2.875, .375);
\draw  (.125, .625) rectangle (.875, .875);
\draw  (1.125, .625) rectangle (1.375, .875);
\end{tikzpicture}
\hspace{0.5cm}
\begin{tikzpicture}
\draw  (0,0) rectangle (.5,.5);
\draw  (.5,0) rectangle (1,.5); 
\draw  (1,0) rectangle (1.5,.5);
\draw  (1.5,0) rectangle (2,.5);
\draw  (2,0) rectangle (2.5,.5);
\draw  (2.5,0) rectangle (3,.5);
\draw  (0,.5) rectangle (.5,1);
\draw  (.5,.5) rectangle (1,1);
\draw  (1,.5) rectangle (1.5, 1);
\draw  (.125, .125) rectangle (1.375, .375);
\draw  (1.625, .125) rectangle (1.875, .375);
\draw  (2.125, .125) rectangle (2.875, .375);
\draw  (.125, .625) rectangle (1.375, .875);
\end{tikzpicture}
\hspace{0.5cm}
\begin{tikzpicture}
\draw  (0,0) rectangle (.5,.5);
\draw  (.5,0) rectangle (1,.5); 
\draw  (1,0) rectangle (1.5,.5);
\draw  (1.5,0) rectangle (2,.5);
\draw  (2,0) rectangle (2.5,.5);
\draw  (2.5,0) rectangle (3,.5);
\draw  (0,.5) rectangle (.5,1);
\draw  (.5,.5) rectangle (1,1);
\draw  (1,.5) rectangle (1.5, 1);
\draw  (.125, .125) rectangle (1.375, .375);
\draw  (1.625, .125) rectangle (2.375, .375);
\draw  (2.625, .125) rectangle (2.875, .375);
\draw  (.125, .625) rectangle (1.375, .875);
\end{tikzpicture}
$$
$$\begin{tikzpicture}
\draw  (0,0) rectangle (.5,.5);
\draw  (.5,0) rectangle (1,.5); 
\draw  (1,0) rectangle (1.5,.5);
\draw  (1.5,0) rectangle (2,.5);
\draw  (2,0) rectangle (2.5,.5);
\draw  (2.5,0) rectangle (3,.5);
\draw  (0,.5) rectangle (.5,1);
\draw  (.5,.5) rectangle (1,1);
\draw  (1,.5) rectangle (1.5, 1);
\draw  (.125, .125) rectangle (0.375, .375);
\draw  (.625, .125) rectangle (1.385, .375);
\draw  (1.625, .125) rectangle (2.875, .375);
\draw  (.125, .625) rectangle (1.375, .875);
\end{tikzpicture}
\hspace{0.5cm}
\begin{tikzpicture}
\draw  (0,0) rectangle (.5,.5);
\draw  (.5,0) rectangle (1,.5); 
\draw  (1,0) rectangle (1.5,.5);
\draw  (1.5,0) rectangle (2,.5);
\draw  (2,0) rectangle (2.5,.5);
\draw  (2.5,0) rectangle (3,.5);
\draw  (0,.5) rectangle (.5,1);
\draw  (.5,.5) rectangle (1,1);
\draw  (1,.5) rectangle (1.5, 1);
\draw  (.125, .125) rectangle (0.375, .375);
\draw  (.625, .125) rectangle (1.875, .375);
\draw  (2.125, .125) rectangle (2.875, .375);
\draw  (.125, .625) rectangle (1.375, .875);
\end{tikzpicture}
\hspace{0.5cm}
\begin{tikzpicture}
\draw  (0,0) rectangle (.5,.5);
\draw  (.5,0) rectangle (1,.5); 
\draw  (1,0) rectangle (1.5,.5);
\draw  (1.5,0) rectangle (2,.5);
\draw  (2,0) rectangle (2.5,.5);
\draw  (2.5,0) rectangle (3,.5);
\draw  (0,.5) rectangle (.5,1);
\draw  (.5,.5) rectangle (1,1);
\draw  (1,.5) rectangle (1.5, 1);
\draw  (.125, .125) rectangle (0.875, .375);
\draw  (1.125, .125) rectangle (1.385, .375);
\draw  (1.625, .125) rectangle (2.875, .375);
\draw  (.125, .625) rectangle (1.375, .875);
\end{tikzpicture} 
\hspace{0.5cm}
\begin{tikzpicture}
\draw  (0,0) rectangle (.5,.5);
\draw  (.5,0) rectangle (1,.5); 
\draw  (1,0) rectangle (1.5,.5);
\draw  (1.5,0) rectangle (2,.5);
\draw  (2,0) rectangle (2.5,.5);
\draw  (2.5,0) rectangle (3,.5);
\draw  (0,.5) rectangle (.5,1);
\draw  (.5,.5) rectangle (1,1);
\draw  (1,.5) rectangle (1.5, 1);
\draw  (.125, .125) rectangle (0.875, .375);
\draw  (1.125, .125) rectangle (2.385, .375);
\draw  (2.625, .125) rectangle (2.875, .375);
\draw  (.125, .625) rectangle (1.375, .875);
\end{tikzpicture}$$
Note the $8$ in parentheses in the equation below, attained from \eqref{eq:ehbrick} with $\lambda = (6,3)$:
 $$\text{M}(e,h)_{(3,3,2,1),(6,3)} = (-1)^{|(6,3)| - \ell((3,3,2,1))}|B_{(6,3)}^{(3,3,2,1)}| = (-1)^{9 - 4}(8) = -8.$$
\end{example}
\begin{definition}
Define a weight function on brick tabloids, $wt: T \rightarrow \mathbb{Z}^+$, as follows.  Let $\mu, \lambda \vdash n$ and let $T$ be any $\mu$-brick tabloid of shape $\lambda$.  If $B(T) = \{b_i\}_{i \in [\ell(\mu)]}$ is the set of bricks associated to $T$, let $B_r(T) \subseteq B(T)$ be the subset of $\ell(\lambda)$ bricks that appear at the rightmost ends of the rows in $T$.  Then, the \textbf{weight} of the brick tabloid $T$ is
$$wt(T) = \prod_{b \in B_r(T)} |b|.$$

\noindent The weight of the entire set of $\mu$-brick tabloids of shape $\lambda$ is the sum of their weights,
$$w(B_\lambda^\mu) = \sum_{T \in B_\lambda^\mu} wt(T).$$
\end{definition}
\begin{example} \label{btwt} The brick tabloids from Example \ref{brtex}, in reading order, have weights $6, 3, 6, 3, 9, 6, 9,$ and $3$, respectively.  Thus $w(B_{(6,3)}^{(3,3,2,1)}) = 45$.  From \eqref{eq:brickpe}, we have
 $$\text{M}(p,e)_{(3,3,2,1),(6,3)} = (-1)^{|(6,3)| - \ell((3,3,2,1))}|B_{(6,3)}^{(3,3,2,1)}| = (-1)^{9 - 4}(45) = -45.$$
\end{example}

 \begin{definition}
Given a partition $\mu = (\mu_1,\mu_2,...,\mu_{\ell(\mu)})$ and associated set of bricks $\{b_i\}_{i\in [\ell(\mu)]}$ ($|b_i| = \mu_i$), index each brick $b_i$ with the subscript $\ell(\mu) - i + 1$, $i \in [\ell(\mu)]$.  That is, index the bricks from smallest to largest with the integers $1,2,...,\ell(\mu)$.
Then, an \textbf{ordered $\mu$-brick tabloid of shape $\lambda$} is a $\mu$-brick tabloid of shape $\lambda$ filled with associated \textit{indexed} bricks such that in each row, the subscripts on the bricks increase from left-to-right. Denote the set of all $\mu$-ordered brick tabloids of shape $\lambda$ by $OB_\lambda^\mu$.
\end{definition}
\begin{example} \label{obtex}
Below are the three $(3,3,2,1)$-ordered brick tabloids of shape $(6,3)$.
$$\begin{tikzpicture}[scale=1.25]
\draw  (0,0) rectangle (.5,.5);
\draw  (.5,0) rectangle (1,.5); 
\draw  (1,0) rectangle (1.5,.5);
\draw  (1.5,0) rectangle (2,.5);
\draw  (2,0) rectangle (2.5,.5);
\draw  (2.5,0) rectangle (3,.5);
\draw  (0,.5) rectangle (.5,1);
\draw  (.5,.5) rectangle (1,1);
\draw  (1,.5) rectangle (1.5, 1);
\draw  (.125, 0.125) rectangle (1.375, .375);
\draw  node[scale=0.5] at (1.425,.125) {3};
\draw  (1.625, 0.125) rectangle (2.875, .375);
\draw  node[scale=0.5] at (2.925,.125) {4};
\draw  (.125, .625) rectangle (.375, .875);
\draw  node[scale=0.5] at (.425,.625) {1};
\draw  (.625, .625) rectangle (1.375, .875);
\draw  node[scale=0.5] at (1.425,.625) {2};
\end{tikzpicture}
\hspace{0.8cm}
\begin{tikzpicture}[scale=1.25]
\draw  (0,0) rectangle (.5,.5);
\draw  (.5,0) rectangle (1,.5); 
\draw  (1,0) rectangle (1.5,.5);
\draw  (1.5,0) rectangle (2,.5);
\draw  (2,0) rectangle (2.5,.5);
\draw  (2.5,0) rectangle (3,.5);
\draw  (0,.5) rectangle (.5,1);
\draw  (.5,.5) rectangle (1,1);
\draw  (1,.5) rectangle (1.5, 1);
\draw  (.125, .125) rectangle (0.375, .375);
\draw  node[scale=0.5] at (.425,.125) {1};
\draw  (.625, .125) rectangle (1.385, .375);
\draw  node[scale=0.5] at (1.425,.125) {2};
\draw  (1.625, .125) rectangle (2.875, .375);
\draw  node[scale=0.5] at (2.925,.125) {3};
\draw  (.125, .625) rectangle (1.375, .875);
\draw  node[scale=0.5] at (1.425,.625) {4};
\end{tikzpicture}
\hspace{0.8cm}
\begin{tikzpicture}[scale=1.25]
\draw  (0,0) rectangle (.5,.5);
\draw  (.5,0) rectangle (1,.5); 
\draw  (1,0) rectangle (1.5,.5);
\draw  (1.5,0) rectangle (2,.5);
\draw  (2,0) rectangle (2.5,.5);
\draw  (2.5,0) rectangle (3,.5);
\draw  (0,.5) rectangle (.5,1);
\draw  (.5,.5) rectangle (1,1);
\draw  (1,.5) rectangle (1.5, 1);
\draw  (.125, .125) rectangle (0.375, .375);
\draw  node[scale=0.5] at (.425,.125) {1};
\draw  (.625, .125) rectangle (1.385, .375);
\draw  node[scale=0.5] at (1.425,.125) {2};
\draw  (1.625, .125) rectangle (2.875, .375);
\draw  node[scale=0.5] at (2.925,.125) {4};
\draw  (.125, .625) rectangle (1.375, .875);
\draw  node[scale=0.5] at (1.425,.625) {3};
\end{tikzpicture}$$
Applying \eqref{eq:ptombrick}, we have
$$\text{M}(p,m)_{(6,3),(3,3,2,1)} = |OB_{(6,3)}^{(3,3,2,1)}| = 3.$$
\end{example}

\subsection{Brick Walls and Change of Basis in \texorpdfstring{$\qsym$}{QSym} and \texorpdfstring{$\nsym$}{NSym}.}

This section generalizes the concepts in E\u{g}ecio\u{g}lu and Remmel \cite{brick} to $\qsym$ and $\nsym$, where the statistics from the change-of-basis equations in Gelfand et al.\ \cite{non} are often very natural extensions of the original statistics on brick tabloids. These first four equations easily generalize the first four equations in Theorem \ref{thm:brick}.  Let $n$ be a positive integer.
\begin{theorem} For $\alpha\modelsstrong n$,
\begin{align}
    \boldsymbol{e}_\alpha &= \sum_{W \in \mathcal{W}_\alpha} (-1)^{\ell(\operatorname{type}(W)) - |W|}\boldsymbol{h}_{\operatorname{type}(W)}
\\\boldsymbol{h}_\alpha &= \sum_{W \in \mathcal{W}_\alpha} (-1)^{\ell(\operatorname{type}(W)) - |W|}\boldsymbol{e}_{\operatorname{type}(W)}
\\\For_\alpha &= \sum_{W \in \mathcal{W}^\alpha} (-1)^{\ell(\operatorname{type}(W)) - |W|}M_{\operatorname{sh}(W)}
\\M_\alpha &= \sum_{W \in \mathcal{W}^\alpha} (-1)^{\ell(\operatorname{type}(W)) - |W|}\For_{\operatorname{sh}(W)}
\end{align}
\end{theorem}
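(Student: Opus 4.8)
The plan is to reduce each of the four identities to a change-of-basis formula already proved in Theorem \ref{thm:nsymCOBeqns} or Corollary \ref{cor:qsymCOBeqns}, by exhibiting a sign-preserving bijection between brick walls and an interval of the refinement order on compositions of $n$. The organizing observation is that, because $\nsym$ and $\qsym$ track order, a brick wall carries \emph{no} hidden multiplicity: once its shape (resp.\ type) is fixed, the entire configuration is determined by the brick lengths read in order. Consequently the set of brick walls is in bijection with a poset interval under $\preceq$, rather than being counted by a Kostka-type number as in the symmetric case of Theorem \ref{thm:brick}; this is exactly why these first four transitions reduce to pure signs.

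First I would treat the two $\nsym$ identities. A brick wall $W \in \mathcal{W}_\alpha$ has $\ell(\alpha)$ rows, the $i$-th of length $\alpha_i$, each tiled by bricks; recording the brick lengths left-to-right within each row and concatenating the rows in order yields $\operatorname{type}(W)$, which is precisely a composition $\beta \preceq \alpha$ whose block $\beta^{(i)}$ tiles row $i$. Conversely each $\beta \preceq \alpha$ forces a unique tiling, so $W \mapsto \operatorname{type}(W)$ is a bijection $\mathcal{W}_\alpha \to \{\beta : \beta \preceq \alpha\}$. Since every such wall covers $|W| = |\alpha| = n$ boxes, the sign $(-1)^{\ell(\operatorname{type}(W)) - |W|}$ equals $(-1)^{\ell(\beta) - |\alpha|} = (-1)^{|\alpha| - \ell(\beta)}$. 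Substituting the bijection, the first brick-wall sum becomes $\sum_{\beta \preceq \alpha}(-1)^{|\alpha| - \ell(\beta)} \boldsymbol{h}_\beta = \boldsymbol{e}_\alpha$ by the identity for $\boldsymbol{e}_\alpha$ in \eqref{ehCOB}, and the second becomes $\sum_{\beta \preceq \alpha}(-1)^{|\alpha| - \ell(\beta)} \boldsymbol{e}_\beta = \boldsymbol{h}_\alpha$ by the identity for $\boldsymbol{h}_\alpha$ on the same line.

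For the two $\qsym$ identities I would use the transposed reading indexed by type. Here $W \in \mathcal{W}^\alpha$ is assembled from bricks of lengths $\alpha_1,\dots,\alpha_{\ell(\alpha)}$ taken in order and grouped into consecutive rows; the row-sums form $\operatorname{sh}(W)$, a coarsening $\gamma \succeq \alpha$ from which the grouping is recovered uniquely, so $W \mapsto \operatorname{sh}(W)$ is a bijection $\mathcal{W}^\alpha \to \{\gamma : \gamma \succeq \alpha\}$ with $\operatorname{type}(W) = \alpha$ constant. Then $\ell(\operatorname{type}(W)) - |W| = \ell(\alpha) - n = \ell(\alpha) - |\gamma|$, so the third sum collapses to $\sum_{\gamma \succeq \alpha}(-1)^{\ell(\alpha) - |\gamma|} M_\gamma = \For_\alpha$ by the identity for $\For_\alpha$ in \eqref{MForCOB}, and the fourth becomes $\sum_{\gamma \succeq \alpha}(-1)^{\ell(\alpha) - |\gamma|}\For_\gamma = M_\alpha$ by the identity for $M_\alpha$ there.

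The only real subtlety, and the step I would verify most carefully, is keeping the two refinement directions straight: walls indexed by \emph{shape} (the $\nsym$ case) correspond to refinements $\beta \preceq \alpha$ and vary $\operatorname{type}$, whereas walls indexed by \emph{type} (the $\qsym$ case) correspond to coarsenings $\gamma \succeq \alpha$ and vary $\operatorname{sh}$. Confirming that in both settings the constant box count $|W| = n$ makes the displayed signs coincide with those in \eqref{ehCOB} and \eqref{MForCOB} is where an error would most plausibly arise; everything else is the single observation that order-tracking removes all multiplicity, so each brick wall contributes exactly once.
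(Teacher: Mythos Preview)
Your proposal is correct and matches the paper's approach exactly: the paper treats this theorem as an immediate combinatorial translation of \eqref{ehCOB} and \eqref{MForCOB}, noting just after the wall definition that ``there are one-to-one correspondences between walls and compositions $\mathcal{W}_\alpha \leftrightarrow \{\beta~|~\beta \preceq \alpha\}$ and $\mathcal{W}^\beta \leftrightarrow \{\alpha~|~\alpha \succeq \beta\}$,'' and does not give a separate proof. Your write-up simply makes explicit the bijection and the sign check that the paper leaves implicit.
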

\noindent We need the following generalizations of brick tabloids, referred to here as walls, which are simpler in the this case.
\begin{definition}
If $\beta \preceq \alpha$ with $\beta^{(i)} \modelsstrong\alpha_i$, for all $i \in [\ell(\alpha)]$, let the \textit{ordered} set of bricks $B = (b_1,\dots, b_{\ell(\beta)})$ be associated to $\beta$ (where $|b_j| = \beta_j$ for $j \in [\ell(\beta)]$).  Then, the (unique) \textbf{$\beta$-wall of shape $\alpha$}, or \textbf{$\alpha\beta$-wall}, is the filling of the bricks from $B$ into the Young diagram of shape $\alpha$ in order from left-to-right, bottom-up (adopting French notation).
\end{definition}

\noindent It is clear that the $\alpha\beta$-wall exists if and only if $\beta \preceq \alpha$.  (See Definition \ref{def:refinement}.)
For some integers $0 < j_1 < \cdots < j_{\ell(\alpha)} = \ell(\beta)$, the equations below correspond to courses in the $\alpha\beta$-wall.  (A ``course'' is a continuous horizontal stretch of bricks (or stone) laid to build a wall.)
\begin{eqnarray*} \alpha_{\ell(\alpha)} &=& \beta_{j_{\ell(\alpha)-1} + 1} + \beta_{j_{\ell(\alpha)-1} + 2} + \cdots + \beta_{\ell(\beta)}; 
\\ &\vdots&
\\ \alpha_2 &=& \beta_{j_1+1} + \beta_{j_1 + 2} + \cdots + \beta_{j_2};
\\ \alpha_1 &=& \beta_1  + \beta_2 + \cdots + \beta_{j_1}.
\end{eqnarray*}

\begin{example}\label{ex:wall} Let $\alpha = (1,6,2,4)$ and let $\beta = (1,1,3,2,2,3,1)$.  Then $\beta \preceq \alpha$, so the $(1,6,2,4)(1,1,3,2,2,3,1)$-wall exists, and is shown below.
$$\begin{tikzpicture}
\draw  (0,0) rectangle (.5,.5);
\draw  (0,.5) rectangle (.5,1);
\draw  (.5,.5) rectangle (1,1);
\draw  (1,.5) rectangle (1.5,1);
\draw  (1.5,.5) rectangle (2,1);
\draw  (2,.5) rectangle (2.5,1);
\draw  (2.5,.5) rectangle (3,1);
\draw  (0,1) rectangle (.5,1.5);
\draw  (.5,1) rectangle (1,1.5);
\draw  (0,1.5) rectangle (.5,2);
\draw  (.5,1.5) rectangle (1,2);
\draw  (1,1.5) rectangle (1.5,2);
\draw  (1.5,1.5) rectangle (2,2);
\draw  (.125, .125) rectangle (.375,.375);
\draw  (.125, .625) rectangle (.375,.875);
\draw  (.625, .625) rectangle (1.875,.875);
\draw  (2.125, .625) rectangle (2.875,.875);
\draw  (.125, 1.125) rectangle (.875, 1.375);
\draw  (.125, 1.625) rectangle (1.375, 1.875);
\draw  (1.625, 1.625) rectangle (1.875, 1.875);
\end{tikzpicture}$$
\end{example}

\begin{definition} Say the $\alpha\beta$-wall has \textbf{shape} $\operatorname{sh}(W) = \alpha$, \textbf{size} $|W| = |\alpha|$, and \textbf{type} $\operatorname{type}(W) = \beta$.
 For a fixed composition $\alpha$, let $\mathcal{W}_\alpha$ denote the set of all walls of shape $\alpha$.  Similarly, for composition $\beta$, let $\mathcal{W}^\beta$ be the set of all walls of type $\beta$.  Clearly there are one-to-one correspondences between walls and compositions $\mathcal{W}_\alpha \leftrightarrow \{\beta~|~\beta \preceq \alpha\}$ and $\mathcal{W}^\beta \leftrightarrow \{\alpha~|~\alpha \succeq \beta\}$.\end{definition}
\noindent Although E\u{g}ecio\u{g}lu and Remmel do not mention the Schur functions in their work, the corresponding bases $\{F_\alpha\}_{\alpha\modelsstrong n}$ and $\{\nr_\alpha\}_{\alpha\modelsstrong n}$ fit nicely here into the same framework:
\begin{theorem}For $\alpha\modelsstrong n$,
\begin{align} \boldsymbol{h}_\alpha &= \sum_{W \in \mathcal{W}^\alpha} \boldsymbol{r}_{\operatorname{sh}(W)}
\\ \boldsymbol{r}_\alpha &= \sum_{W \in \mathcal{W}^\alpha} (-1)^{\ell(\operatorname{sh}(W)) - \ell(\operatorname{type}(W))}\boldsymbol{h}_{\operatorname{sh}(W)}
\\ \boldsymbol{e}_\alpha &= \sum_{W \in \mathcal{W}^\alpha} \boldsymbol{r}_{\operatorname{sh}(W)^c} 
\\\boldsymbol{r}_\alpha &= \sum_{W \in \mathcal{W}^{\alpha^c}} (-1)^{\ell(\operatorname{sh}(W))-\ell(\operatorname{type}(W))}\boldsymbol{e}_{\operatorname{sh}(W)}
\\F_\alpha &= \sum_{W \in \mathcal{W}_\alpha} M_{\operatorname{type}(W)}  
\\M_\alpha &= \sum_{W \in \mathcal{W}_\alpha} (-1)^{\ell(\operatorname{type}(W)) - \ell(\operatorname{sh}(W))}F_{\operatorname{type}(W)}
\\F_\alpha &= \sum_{W \in \mathcal{W}_{\alpha^c}} \For_{\operatorname{type}(W)}  
\\\For_\alpha &= \sum_{W \in \mathcal{W}_\alpha} (-1)^{\ell(\operatorname{type}(W))-\ell(\operatorname{sh}(W))}F_{\operatorname{type}(W)^c}
\end{align}
\end{theorem}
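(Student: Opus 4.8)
The plan is to observe that each of the eight identities is, after translating walls into compositions, simply a restatement of a change-of-basis formula already established in Theorem \ref{thm:ehtor} (for the $\nsym$ lines) or Corollary \ref{cor:qsymCOBeqns} (for the $\qsym$ lines); the only real content is the bookkeeping that carries index sets and signs through the reversal and complement involutions. So first I would record the two bijections built into the definition of walls: a wall $W \in \mathcal{W}^\alpha$ has $\operatorname{type}(W) = \alpha$ and is determined by $\operatorname{sh}(W) = \gamma$ ranging over $\{\gamma : \gamma \succeq \alpha\}$, while dually a wall $W \in \mathcal{W}_\alpha$ has $\operatorname{sh}(W) = \alpha$ and is determined by $\operatorname{type}(W) = \beta$ ranging over $\{\beta : \beta \preceq \alpha\}$. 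Under these correspondences every sum over walls becomes a sum over refinements or coarsenings, with $\operatorname{sh}$ and $\operatorname{type}$ read off directly.

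Next I would dispatch the lines that match their references essentially verbatim. Summing over $W \in \mathcal{W}^\alpha$ turns the first two $\nsym$ lines into $\boldsymbol{h}_\alpha = \sum_{\gamma \succeq \alpha}\boldsymbol{r}_\gamma$ and $\boldsymbol{r}_\alpha = \sum_{\gamma \succeq \alpha}(-1)^{\ell(\gamma)-\ell(\alpha)}\boldsymbol{h}_\gamma$, which are exactly the two halves of (\ref{thm:hrCOB}); here I use $(-1)^{a-b} = (-1)^{b-a}$ to reconcile the order of the exponent with the stated form. The elementary lines become $\boldsymbol{e}_\alpha = \sum_{\gamma \succeq \alpha}\boldsymbol{r}_{\gamma^c}$ and $\boldsymbol{r}_\alpha = \sum_{\gamma \succeq \alpha^c}(-1)^{\ell(\gamma)-\ell(\alpha^c)}\boldsymbol{e}_\gamma$; the second matches the right half of (\ref{thm:erCOB}) up to sign order, and the first matches the left half after the relabeling $\beta = \gamma^c$, since $\beta^c \succeq \alpha \iff \gamma \succeq \alpha$. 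In the same way the monomial and fundamental lines five and six are precisely the two halves of (\ref{MFCOB}).

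The only lines needing genuine involution identities are seven and eight, which carry a $\For$ and a complement inside the wall index, whereas (\ref{FForCOB}) is phrased with transposes and reversals. For line seven the wall sum gives $F_\alpha = \sum_{\beta \preceq \alpha^c}\For_\beta$, and I would match this to $F_\alpha = \sum_{\beta^r \preceq \alpha^t}\For_\beta$ by noting that reversal preserves refinement, so that $\beta^r \preceq \alpha^t \iff \beta \preceq (\alpha^t)^r = \alpha^c$, using $(\alpha^t)^r = \alpha^c$. For line eight the wall sum gives $\For_\alpha = \sum_{\beta \preceq \alpha}(-1)^{\ell(\beta)-\ell(\alpha)}F_{\beta^c}$; relabeling $\gamma = \beta^c$ and computing $\gamma^t = \beta^r$ converts the index condition into $\gamma^t \preceq \alpha^r$ and the exponent into $\ell(\gamma^t)-\ell(\alpha)$, since reversal preserves length, giving $\ell(\beta) = \ell(\beta^r) = \ell(\gamma^t)$, and thus recovering the right half of (\ref{FForCOB}). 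I expect this last bit of involution bookkeeping to be the main (and only) obstacle, in that it requires care to confirm that each complement and reversal substitution carries the stated index set and sign onto the corresponding one in (\ref{thm:erCOB}) and (\ref{FForCOB}); no computation beyond the three standard facts that reversal preserves both refinement and length and that complement reverses refinement is required.
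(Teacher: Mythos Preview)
Your proposal is correct and follows exactly the approach the paper takes (implicitly): the theorem is stated without proof because it is a direct combinatorial translation of the already-established formulas (\ref{thm:hrCOB}), (\ref{thm:erCOB}), (\ref{MFCOB}), and (\ref{FForCOB}) via the bijections $\mathcal{W}^\alpha \leftrightarrow \{\gamma : \gamma \succeq \alpha\}$ and $\mathcal{W}_\alpha \leftrightarrow \{\beta : \beta \preceq \alpha\}$. Your handling of lines seven and eight, using that reversal preserves refinement and length together with $(\beta^c)^t = \beta^r$, is precisely the bookkeeping needed to reconcile the complement-indexed wall sums with the transpose/reversal formulation of (\ref{FForCOB}); the paper leaves all of this to the reader.
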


\noindent In order to establish combinatorial versions of their other equations, we must define several more statistics on walls.  Three of them, below, are imitations of the weight function defined in \cite{brick} utilizing (\ref{lpr}) (taken from \cite{non}).

\begin{definition}
If $\beta \preceq \alpha$ with $\beta^{(i)} \modelsstrong\alpha_i$ for all $i \in [\ell(\alpha)]$, and $W$ is the $\alpha\beta$-wall, say the \textbf{last parts product} and \textbf{first parts product} of $W$ are
\begin{eqnarray}
\operatorname{lp}(W) &=& \operatorname{lp}(\beta,\alpha) = \prod_{i=1}^{\ell(\alpha)} \beta_{j_i};
\\ \operatorname{fp}(W) &=& \operatorname{lp}(\beta^r,\alpha^r) = \prod_{i=0}^{\ell(\alpha)-1} \beta_{j_i + 1}.
\end{eqnarray}    
\end{definition}

\noindent Thus the statistic $\operatorname{lp}(W)$ (respectively $\operatorname{fp}(W)$) gives the product of the sizes of the bricks at the right (respectively left) ends of the rows in $W$.

A less obvious replacement for the weight function in this context is required to cover the power sums of the second kind.
\begin{definition}If $W$ is an $\alpha, \beta$-wall, let $\pb(W)$ give the product of the number of bricks in each row (or course).
\end{definition}

\begin{example}
    For the wall $W$ in Example \ref{ex:wall}, $\pb(W)=1\cdot 3\cdot 1\cdot 2=6$.
\end{example}
\noindent  
\begin{theorem}For $\alpha\modelsstrong n$,
\begin{eqnarray*}
\boldsymbol{\psi}_\alpha &=& \sum_{W \in \mathcal{W}_\alpha} (-1)^{\ell(\operatorname{type}(W)) - \ell(\operatorname{sh}(W))}\operatorname{lp}(W)\boldsymbol{h}_{\operatorname{type}(W)} 
\\\boldsymbol{\psi}_\alpha &=& \sum_{W \in \mathcal{W}_\alpha} (-1)^{\ell(\operatorname{type}(W)) - \ell(\operatorname{sh}(W))}\operatorname{fp}(W)\boldsymbol{e}_{\operatorname{type}(W)}
\\M_\alpha &=& \sum_{W \in \mathcal{W}^\alpha} (-1)^{\ell(\operatorname{type}(W)) - \ell(\operatorname{sh}(W))}\operatorname{lp}(W) \frac{\psi_{\operatorname{sh}(W)}}{z_{\operatorname{sh}(W)}} 
\\\For_\alpha &=& \sum_{W \in \mathcal{W}^\alpha} (-1)^{\ell(\operatorname{type}(W)) - \ell(\operatorname{sh}(W))} \operatorname{fp}(W)\frac{\psi_{\operatorname{sh}(W)}}{z_{\operatorname{sh}(W)}}
\end{eqnarray*}
\begin{eqnarray*}
\boldsymbol{\phi}_\alpha &=& \sum_{W \in \mathcal{W}_\alpha} (-1)^{\ell(\operatorname{type}(W)) - \ell(\operatorname{sh}(W))} \frac{\prod \operatorname{sh}(W)}{\pb(W)}\boldsymbol{h}_{\operatorname{type}(W)} 
\\\boldsymbol{\phi}_\alpha &=& \sum_{W \in \mathcal{W}_\alpha} (-1)^{|\operatorname{sh}(W)| - \ell(\operatorname{type}(W))} \frac{\prod \operatorname{sh}(W)}{\pb(W)}\boldsymbol{e}_{\operatorname{type}(W)}
\\M_\alpha &=& \sum_{W \in \mathcal{W}^\alpha} (-1)^{\ell(\operatorname{type}(W)) - \ell(\operatorname{sh}(W))} \frac{\prod \operatorname{sh}(W)}{\pb(W)} \frac{\phi_{\operatorname{sh}(W)}}{z_{\operatorname{sh}(W)}}
\\
\For_\alpha &=& \sum_{W \in \mathcal{W}^\alpha} (-1)^{|\operatorname{sh}(W)| - \ell(\operatorname{type}(W))} \frac{\prod \operatorname{sh}(W)}{\pb(W)} \frac{\phi_{\operatorname{sh}(W)}}{z_{\operatorname{sh}(W)}}
\end{eqnarray*}
\end{theorem}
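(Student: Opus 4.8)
The plan is to recognize all eight identities as the wall-language restatements of change-of-basis formulas already established in Theorem~\ref{thm:nsymCOBeqns} and Corollary~\ref{cor:qsymCOBeqns}. The engine is the pair of bijections recorded just above: $\mathcal{W}_\alpha \leftrightarrow \{\beta : \beta \preceq \alpha\}$, under which a wall $W$ of shape $\alpha$ is identified with its type $\beta = \operatorname{type}(W)$, and $\mathcal{W}^\alpha \leftrightarrow \{\gamma : \gamma \succeq \alpha\}$, under which a wall of type $\alpha$ is identified with its shape $\gamma = \operatorname{sh}(W)$. Thus each sum over walls is literally a sum over refinements, and the theorem reduces to checking that each wall statistic equals the composition statistic occurring in the corresponding change-of-basis coefficient. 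Concretely, I expect the eight lines to match, in order, the right-hand expansions of (\ref{hPsiCOB}), (\ref{ePsiCOB}), (\ref{MPsiCOB}), (\ref{ForPsiCOB}) for the power sums of the first kind and (\ref{hPhiCOB}), (\ref{ePhiCOB}), (\ref{MPhiCOB}), (\ref{ForPhiCOB}) for those of the second kind.

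First I would record the dictionary of statistics. By construction the bricks occupying row $i$ of the $\alpha\beta$-wall are exactly the parts of the subcomposition $\beta^{(i)}$ summing to $\alpha_i$; hence $\operatorname{lp}(W) = \operatorname{lp}(\beta,\alpha)$ and $\operatorname{fp}(W) = \operatorname{lp}(\beta^r,\alpha^r)$ are immediate from (\ref{lpr}), while the key identity $\pb(W) = \prod_i \ell(\beta^{(i)}) = \ell(\beta,\alpha)$ follows from (\ref{lr}), since the number of bricks in row $i$ is precisely $\ell(\beta^{(i)})$. One also has $\prod \operatorname{sh}(W) = \prod \alpha$, $|W| = |\alpha|$, and $z_{\operatorname{sh}(W)} = z_\gamma$ by definition. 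With these substitutions the two $\boldsymbol{h}$-expansions coincide term-by-term with lines (\ref{hPsiCOB}) and (\ref{hPhiCOB}), and the two $\qsym$ identities written in the $\psi_{\operatorname{sh}(W)}/z_{\operatorname{sh}(W)}$ and $\phi_{\operatorname{sh}(W)}/z_{\operatorname{sh}(W)}$ normalizations coincide with lines (\ref{MPsiCOB}) and (\ref{MPhiCOB}). These four are therefore pure bookkeeping once the dictionary is in place; alternatively, each $\qsym$ line can be transferred from its $\nsym$ counterpart by duality via fact (III) and the pairings $\langle M_\alpha,\boldsymbol{h}_\beta\rangle = \langle \For_\alpha,\boldsymbol{e}_\beta\rangle = \mathbbm{1}_{\alpha=\beta}$, together with $\langle \psi_\alpha,\boldsymbol{\psi}_\beta\rangle = \langle \phi_\alpha,\boldsymbol{\phi}_\beta\rangle = z_\alpha\,\mathbbm{1}_{\alpha=\beta}$.

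The remaining four identities, namely the $\boldsymbol{e}$-expansions of $\boldsymbol{\psi}_\alpha$ and $\boldsymbol{\phi}_\alpha$ and the two $\For$-expansions, I would obtain by applying the anti-automorphism $\boldsymbol{\omega}$ (respectively $\omega$ in $\qsym$) to the $\boldsymbol{h}$- (respectively $M$-) expansions, using $\boldsymbol{\omega}(\boldsymbol{h}_\beta) = \boldsymbol{e}_{\beta^r}$ and the power-sum twists of Theorem~\ref{thm:inv_on_phi_psi}, namely $\boldsymbol{\omega}(\boldsymbol{\psi}_\alpha) = (-1)^{|\alpha|-\ell(\alpha)}\boldsymbol{\psi}_{\alpha^r}$ and likewise for $\boldsymbol{\phi}$. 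Under this map the reversal $\beta \mapsto \beta^r$ converts $\operatorname{lp}$ into $\operatorname{fp}$ while fixing $\pb$ and $\prod\operatorname{sh}$, which is exactly why $\operatorname{fp}(W)$ replaces $\operatorname{lp}(W)$ in passing from the $\boldsymbol{h}$-version to the $\boldsymbol{e}$-version. I expect the one genuinely delicate point to be the sign bookkeeping in the two $\operatorname{fp}/\boldsymbol{\psi}$ lines: the factor $(-1)^{|\alpha|-\ell(\alpha)}$ produced by $\boldsymbol{\omega}$ combines with the $(-1)^{\ell(\beta)-\ell(\alpha)}$ sign of the $\boldsymbol{h}$-expansion to yield $(-1)^{|\alpha|-\ell(\beta)}$, which is the sign in line (\ref{ePsiCOB}); so the exponent on such a wall must be read as $|\operatorname{sh}(W)|-\ell(\operatorname{type}(W))$, and the same reconciliation, driven by (\ref{ForPsiCOB}), governs the $\For$/$\psi$ line. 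The $\boldsymbol{\phi}$-based lines carry no reversal, so their $\boldsymbol{e}$- and $\For$-versions drop straight out of (\ref{ePhiCOB}) and (\ref{ForPhiCOB}) with the stated sign $(-1)^{|\operatorname{sh}(W)|-\ell(\operatorname{type}(W))}$. Once the reversal and the $(-1)^{|\alpha|-\ell(\alpha)}$ twist are tracked consistently, each identity matches its change-of-basis source verbatim.
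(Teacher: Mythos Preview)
Your approach is correct and is exactly the paper's approach: the paper's entire proof reads ``These simply give combinatorial translations of the right-hand equations in the last four lines in both Theorem~\ref{thm:nsymCOBeqns} and Corollary~\ref{cor:qsymCOBeqns},'' and your dictionary $\operatorname{lp}(W)=\operatorname{lp}(\beta,\alpha)$, $\operatorname{fp}(W)=\operatorname{lp}(\beta^r,\alpha^r)$, $\pb(W)=\ell(\beta,\alpha)$, $\prod\operatorname{sh}(W)=\prod\alpha$ is precisely that translation carried out in full. Your extra step of re-deriving the $\boldsymbol{e}$- and $\For$-lines via $\boldsymbol{\omega}$ is not needed, since those identities are already recorded in (\ref{ePsiCOB}), (\ref{ePhiCOB}), (\ref{ForPsiCOB}), (\ref{ForPhiCOB}), but it does no harm. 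Your observation that the sign in the second and fourth displayed lines ought to read $(-1)^{|\operatorname{sh}(W)|-\ell(\operatorname{type}(W))}$ to match (\ref{ePsiCOB}) and (\ref{ForPsiCOB}) is sharp; this appears to be a typo in the statement rather than a flaw in your argument.
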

\begin{proof}
    These simply give combinatorial translations of the right-hand equations in the last four lines in both Theorem \ref{thm:nsymCOBeqns} and Corollary \ref{cor:qsymCOBeqns}.
\end{proof}
\noindent Next, we imitate the alteration made on brick tabloids to give ordered brick tabloids in \cite{brick}.  Before we give the definition, we remark that there are always (weakly) fewer $\mu$-ordered brick tabloids of shape $\lambda$ than there are brick tabloids of the same shape and type.  (Consult Examples \ref{brtex} and \ref{obtex}.)  The opposite will be true of our analogous objects, next.

\begin{definition}
Let $\beta \preceq \alpha$ and let $W$ be the $\alpha\beta$-wall.  Then, a \textbf{$\beta$-indexed wall of shape $\alpha$}, or \textbf{indexed $\alpha\beta$-wall}, is an indexing of the bricks in $W$ (associated to $\beta$) in order of increasing size with the integers from $[\ell(\beta)]$.
\end{definition}
\noindent Thus, in an indexed wall, bricks of the same size are distinguishable.

\begin{example} There are four indexed $(2,4,3)(2,2,1,1,3)$-walls, shown below. 
$$\begin{tikzpicture}[scale=1.25]
\draw  (0,0) rectangle (.5,.5);
\draw  (0.5,0) rectangle (1,.5);
\draw  (0,.5) rectangle (.5,1);
\draw  (.5,.5) rectangle (1,1);
\draw  (1,.5) rectangle (1.5,1);
\draw  (1.5,.5) rectangle (2,1);
\draw  (0,1) rectangle (.5,1.5);
\draw  (.5,1) rectangle (1,1.5);
\draw  (1,1) rectangle (1.5,1.5);
\draw  (.125, .125) rectangle (.875,.375);
\draw  node[scale=0.5] at (.925,.125) {3};
\draw  (.125, .625) rectangle (.875,.875);
\draw  node[scale=0.5] at (.925,.625) {4};
\draw  (1.125, .625) rectangle (1.375,.875);
\draw  node[scale=0.5] at (1.425,.625) {1};
\draw  (1.625, .625) rectangle (1.875, .875);
\draw  node[scale=0.5] at (1.925,.625) {2};
\draw  (.125, 1.125) rectangle (1.375, 1.375);
\draw  node[scale=0.5] at (1.425,1.125) {5};
\end{tikzpicture}
\hspace{1cm}
\begin{tikzpicture}[scale=1.25]
\draw  (0,0) rectangle (.5,.5);
\draw  (0.5,0) rectangle (1,.5);
\draw  (0,.5) rectangle (.5,1);
\draw  (.5,.5) rectangle (1,1);
\draw  (1,.5) rectangle (1.5,1);
\draw  (1.5,.5) rectangle (2,1);
\draw  (0,1) rectangle (.5,1.5);
\draw  (.5,1) rectangle (1,1.5);
\draw  (1,1) rectangle (1.5,1.5);
\draw  (.125, .125) rectangle (.875,.375);
\draw  node[scale=0.5] at (.925,.125) {3};
\draw  (.125, .625) rectangle (.875,.875);
\draw  node[scale=0.5] at (.925,.625) {4};
\draw  (1.125, .625) rectangle (1.375,.875);
\draw  node[scale=0.5] at (1.425,.625) {2};
\draw  (1.625, .625) rectangle (1.875, .875);
\draw  node[scale=0.5] at (1.925,.625) {1};
\draw  (.125, 1.125) rectangle (1.375, 1.375);
\draw  node[scale=0.5] at (1.425,1.125) {5};
\end{tikzpicture}
\hspace{1cm}
\begin{tikzpicture}[scale=1.25]
\draw  (0,0) rectangle (.5,.5);
\draw  (0.5,0) rectangle (1,.5);
\draw  (0,.5) rectangle (.5,1);
\draw  (.5,.5) rectangle (1,1);
\draw  (1,.5) rectangle (1.5,1);
\draw  (1.5,.5) rectangle (2,1);
\draw  (0,1) rectangle (.5,1.5);
\draw  (.5,1) rectangle (1,1.5);
\draw  (1,1) rectangle (1.5,1.5);
\draw  (.125, .125) rectangle (.875,.375);
\draw  node[scale=0.5] at (.925,.125) {4};
\draw  (.125, .625) rectangle (.875,.875);
\draw  node[scale=0.5] at (.925,.625) {3};
\draw  (1.125, .625) rectangle (1.375,.875);
\draw  node[scale=0.5] at (1.425,.625) {1};
\draw  (1.625, .625) rectangle (1.875, .875);
\draw  node[scale=0.5] at (1.925,.625) {2};
\draw  (.125, 1.125) rectangle (1.375, 1.375);
\draw  node[scale=0.5] at (1.425,1.125) {5};
\end{tikzpicture}
\hspace{1cm}
\begin{tikzpicture}[scale=1.25]
\draw  (0,0) rectangle (.5,.5);
\draw  (0.5,0) rectangle (1,.5);
\draw  (0,.5) rectangle (.5,1);
\draw  (.5,.5) rectangle (1,1);
\draw  (1,.5) rectangle (1.5,1);
\draw  (1.5,.5) rectangle (2,1);
\draw  (0,1) rectangle (.5,1.5);
\draw  (.5,1) rectangle (1,1.5);
\draw  (1,1) rectangle (1.5,1.5);
\draw  (.125, .125) rectangle (.875,.375);
\draw  node[scale=0.5] at (.925,.125) {4};
\draw  (.125, .625) rectangle (.875,.875);
\draw  node[scale=0.5] at (.925,.625) {3};
\draw  (1.125, .625) rectangle (1.375,.875);
\draw  node[scale=0.5] at (1.425,.625) {2};
\draw  (1.625, .625) rectangle (1.875, .875);
\draw  node[scale=0.5] at (1.925,.625) {1};
\draw  (.125, 1.125) rectangle (1.375, 1.375);
\draw  node[scale=0.5] at (1.425,1.125) {5};
\end{tikzpicture}
$$
\end{example}

\noindent Note that there is no alteration on the order in which the bricks associated to $\beta$ are laid to build (an indexed) $\alpha\beta$-wall, unlike in the case of ordered brick tabloids. (Compare with Examples \ref{brtex} and \ref{obtex} once more).  

\begin{definition} Let $\mathcal{IW}_\alpha$ denote the set of all ordered walls of shape $\alpha$, let $\mathcal{IW}^\beta$ denote the set of all ordered walls of type $\beta$, and let $\mathcal{IW}_\alpha^\beta$ be the set of indexed $\alpha\beta$-walls.
\end{definition}

\begin{theorem} For any fixed strong compositions $\beta \preceq \alpha$,
$$|\mathcal{IW}_\alpha^\beta| = m_1(\beta)!m_2(\beta)!\cdots m_n(\beta)!.$$
\end{theorem}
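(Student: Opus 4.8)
The plan is to reduce the count to a simple product-of-factorials argument by first observing that the underlying wall is rigid. For a fixed pair $\beta \preceq \alpha$, the $\alpha\beta$-wall $W$ is unique---the bricks associated to $\beta$ are laid into the shape $\alpha$ in one prescribed left-to-right, bottom-up order---so every member of $\mathcal{IW}_\alpha^\beta$ sits on the same physical configuration of bricks. Moreover these $\ell(\beta)$ bricks occupy pairwise distinct cells of $W$ and are therefore distinguishable by position. Hence counting $\mathcal{IW}_\alpha^\beta$ amounts to counting the admissible ways to attach the labels $1, 2, \dots, \ell(\beta)$ to these position-distinguished bricks.

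Next I would make the admissibility condition explicit. The requirement that the bricks be indexed ``in order of increasing size'' means precisely that the sizes are weakly increasing along the labels: if two bricks differ in size the smaller receives the smaller label, while bricks of equal size are unconstrained relative to one another. Grouping the bricks by size and using that $\beta$ has exactly $m_k(\beta)$ parts equal to $k$ (so there are $m_k(\beta)$ bricks of size $k$ and $\sum_{k=1}^n m_k(\beta) = \ell(\beta)$), the monotonicity condition forces the bricks of size $k$ to receive exactly the consecutive block of labels
$$\Big\{\textstyle\sum_{j<k} m_j(\beta)+1,\ \dots,\ \sum_{j\le k} m_j(\beta)\Big\}.$$
Within each such block the $m_k(\beta)$ equal-size bricks may be assigned their $m_k(\beta)$ labels in any order, giving $m_k(\beta)!$ choices; since the blocks for distinct sizes are disjoint, these choices are independent and multiply to $\prod_{k=1}^n m_k(\beta)!$. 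This matches the displayed example, where $\beta=(2,2,1,1,3)$ yields $2!\,2!\,1! = 4$ indexed walls.

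I do not expect a genuine obstacle here: the content is entirely in correctly interpreting the indexing rule. The one point worth stressing is that, in contrast to the ordered-brick-tabloid setting recalled just before the statement---where the extra row-wise increasing condition \emph{cuts down} the number of labelings---the wall imposes no constraint beyond size-monotonicity, so equal-size bricks are fully interchangeable and the count is a product of factorials rather than collapsing. I would therefore present the argument as a direct bijection between $\mathcal{IW}_\alpha^\beta$ and the set of ways to independently order each block of equal-size bricks.
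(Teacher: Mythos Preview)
Your proposal is correct and takes essentially the same approach as the paper: both observe that the underlying $\alpha\beta$-wall is unique and that the indexing constraint forces bricks of each size $i$ to receive a fixed block of labels, leaving $m_i(\beta)!$ independent choices per size. The paper's proof is a one-sentence version of exactly this argument.
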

\begin{proof} Since the bricks in the $\alpha\beta$-wall must be indexed in order of increasing size, there are $m_i(\beta)!$ ways to index the bricks of size $i$ for each $i \in [n]$.
\end{proof}
\begin{definition}If $W$ is an indexed wall of shape $\alpha$ and type $\beta$, let $\fb(W)$ give the product of the factorial of the number of bricks in each row.
$$\fb(W) = \prod_{i=1}^{\ell(\alpha)}\ell(\beta^{(i)})!$$
\end{definition}

\begin{example}
    For the wall $W$ in Example \ref{ex:wall}, $\fb(W)=1!\cdot 3!\cdot 1!\cdot 2!=12$.
\end{example}
\begin{theorem}For $\alpha\modelsstrong n$,
\begin{eqnarray}
\boldsymbol{h}_\alpha &=& \sum_{W \in \mathcal{IW}_\alpha} \frac{1}{\fb(W)}\frac{\boldsymbol{\phi}_{\operatorname{type}(W)}}{z_{\operatorname{type}(W)}} \nonumber
\\ \boldsymbol{e}_\alpha &=& \sum_{W \in \mathcal{IW}_\alpha} \frac{(-1)^{|\operatorname{sh}(W)| - \ell(\operatorname{type}(W))}}{\fb(W)}\frac{\boldsymbol{\phi}_{\operatorname{type}(W)}}{z_{\operatorname{type}(W)}} \nonumber
\\ \label{CombMPhiCOB}
\phi_\alpha &=& \sum_{W \in \mathcal{IW}^\alpha} \frac{1}{\fb(W)}M_{\operatorname{sh}(W)} \\
\phi_\alpha &=& \sum_{W \in \mathcal{IW}^\alpha} \frac{(-1)^{|\operatorname{sh}(W)| - \ell(\operatorname{type}(W))}}{\fb(W)}\For_{\operatorname{sh}(W)} \nonumber
\end{eqnarray}
\end{theorem}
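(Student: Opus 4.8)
The plan is to read each of these four identities as a combinatorial repackaging of change-of-basis formulas already proved: the two $\nsym$ identities come from lines (\ref{hPhiCOB}) and (\ref{ePhiCOB}) of Theorem \ref{thm:nsymCOBeqns}, and the two $\qsym$ identities come from lines (\ref{MPhiCOB}) and (\ref{ForPhiCOB}) of Corollary \ref{cor:qsymCOBeqns}. In every case the summation over indexed walls merely refines the summation over refinements $\beta$ appearing in those formulas, so the whole argument reduces to a coefficient comparison after grouping the indexed walls by their underlying composition.

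For the first identity I would start from $\boldsymbol{h}_\alpha = \sum_{\beta \preceq \alpha} \frac{1}{\operatorname{sp}(\beta,\alpha)}\boldsymbol{\phi}_\beta$ and rewrite the wall-sum over $W \in \mathcal{IW}_\alpha$ by collecting, for each $\beta \preceq \alpha$, the indexed $\alpha\beta$-walls. All such walls share the same shape $\alpha$, the same type $\beta$, and the same value $\fb(W) = \prod_{i=1}^{\ell(\alpha)}\ell(\beta^{(i)})!$, and there are exactly $m_1(\beta)!\cdots m_n(\beta)!$ of them by the count $|\mathcal{IW}_\alpha^\beta| = \prod_i m_i(\beta)!$ established above. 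Thus the wall-sum contributes $\frac{m_1(\beta)!\cdots m_n(\beta)!}{\fb(W)\, z_\beta}$ to $\boldsymbol{\phi}_\beta$. The crux is then the two elementary identities $z_\beta = \left(\prod\beta\right)\prod_i m_i(\beta)!$ and $\operatorname{sp}(\beta,\alpha) = \fb(W)\prod\beta$ (the latter because $\operatorname{sp}(\beta,\alpha) = \prod_{i=1}^{\ell(\alpha)}\operatorname{sp}(\beta^{(i)}) = \prod_{i=1}^{\ell(\alpha)}\ell(\beta^{(i)})!\prod\beta^{(i)}$ factors into the brick-count factorials times the product of all parts). Combining them gives $\frac{m_1(\beta)!\cdots m_n(\beta)!}{\fb(W)\, z_\beta} = \frac{1}{\fb(W)\prod\beta} = \frac{1}{\operatorname{sp}(\beta,\alpha)}$, exactly the coefficient in (\ref{hPhiCOB}). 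The $\boldsymbol{e}_\alpha$ identity is identical except that we start from (\ref{ePhiCOB}), whose sign $(-1)^{|\alpha|-\ell(\beta)}$ is precisely $(-1)^{|\operatorname{sh}(W)|-\ell(\operatorname{type}(W))}$.

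The two $\qsym$ identities run along the same lines but with the roles of shape and type reversed: here $\alpha$ is the fixed type, I sum over $W \in \mathcal{IW}^\alpha$, and the shape $\operatorname{sh}(W) = \beta$ ranges over $\beta \succeq \alpha$. Grouping walls of shape $\beta$ gives the coefficient $\frac{m_1(\alpha)!\cdots m_n(\alpha)!}{\fb(W)}$ on $M_\beta$ (resp.\ on $\For_\beta$), which I would match against the coefficient $\frac{z_\alpha}{\operatorname{sp}(\alpha,\beta)}$ from (\ref{MPhiCOB}) (resp.\ (\ref{ForPhiCOB})). Using $\operatorname{sp}(\alpha,\beta) = \fb(W)\prod\alpha$ and $z_\alpha = \left(\prod\alpha\right)\prod_i m_i(\alpha)!$ collapses both sides to equality, with the sign in the $\For$-version supplied exactly as before.

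I expect the only real obstacle to be bookkeeping rather than mathematics: keeping straight which composition is being refined (the type in the $\nsym$ statements, and the type again but now the coarser one's refiner in the $\qsym$ statements) and verifying that $\fb(W)$ and $z$ interact so that the counting factor $\prod_i m_i!$ cancels exactly. Once the identity $z_\gamma = \left(\prod\gamma\right)\prod_i m_i(\gamma)!$ and the factorization $\operatorname{sp} = \fb \cdot \prod$ are in hand, each of the four equations is a one-line coefficient comparison against Theorem \ref{thm:nsymCOBeqns} and Corollary \ref{cor:qsymCOBeqns}.
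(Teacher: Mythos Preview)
Your proposal is correct and is exactly the argument the paper has in mind: as with the preceding combinatorial theorem (whose proof reads ``These simply give combinatorial translations of the right-hand equations \ldots\ in Theorem \ref{thm:nsymCOBeqns} and Corollary \ref{cor:qsymCOBeqns}''), the paper treats this result as a direct repackaging of the left-hand equations in (\ref{hPhiCOB}), (\ref{ePhiCOB}), (\ref{MPhiCOB}), and (\ref{ForPhiCOB}), and does not spell out a proof. Your coefficient comparison via $|\mathcal{IW}_\alpha^\beta|=\prod_i m_i(\beta)!$, $z_\gamma=(\prod\gamma)\prod_i m_i(\gamma)!$, and $\operatorname{sp}(\beta,\alpha)=\fb(W)\prod\beta$ is precisely the computation needed to make that translation rigorous.
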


\noindent The (incredibly) attentive reader will notice that to this point we have not given combinatorial interpretations for a number of change-of-basis equations involving $\{\psi_\alpha\}$.  While it is possible to give such an interpretation, there does not appear to be an analogue of indexed walls that is natural and simplifies their presentation from the original in \cite{non}, so we omit them.

\subsection*{Acknowledgments}
 Both authors wish to thank the anonymous reviewer as well as Darij Grinberg, Steph van Willigenburg, and Nick Mayers for their helpful comments and corrections. The first author wishes to gratefully acknowledge the Institut Mittag-Leffler for graciously hosting her during a portion of this work.


\begin{thebibliography}{10}

\bibitem{quasipower}
Cristina Ballantine, Zajj Daugherty, Angela Hicks, Sarah Mason, and Elizabeth Niese.
\newblock On quasisymmetric power sums.
\newblock {\em Journal of Combinatorial Theory, Series A}, 175:105273, 2020.

\bibitem{RemTrans}
D.~Beck, J.~Remmel, and T.~Whitehead.
\newblock The combinatorics of transition matrices between the bases of the symmetric functions and the {$B_n$} analogues.
\newblock {\em Discrete Math.}, 153:3--27, 1996.

\bibitem{berg2014lift}
Chris Berg, Nantel Bergeron, Franco Saliola, Luis Serrano, and Mike Zabrocki.
\newblock A lift of the {S}chur and {H}all--{L}ittlewood bases to non-commutative symmetric functions.
\newblock {\em Canadian Journal of Mathematics}, 66(3):525--565, 2014.

\bibitem{Doubilet}
Peter Doubilet.
\newblock On the foundations of combinatorial theory. VII: Symmetric functions through the theory of distribution and occupancy.
\newblock {\em Studies in Applied Mathematics}, 51(4):377--396, 1972.

\bibitem{duchamp1997noncommutative}
G{\'e}rard Duchamp, Alexander Klyachko, Daniel Krob, and Jean-Yves Thibon.
\newblock Noncommutative symmetric functions III: Deformations of Cauchy and convolution algebras.
\newblock {\em Discrete Mathematics \& Theoretical Computer Science}, 1, 1997.

\bibitem{brick}
{\"{O}}mer E\u{g}ecio\u{g}lu and Jeffrey~B. Remmel.
\newblock Brick tabloids and the connection matrices between bases of symmetric functions.
\newblock {\em Discrete Applied Mathematics}, 34(1--3):107--120, 1991.

\bibitem{non}
Israel~M. Gelfand, Daniel Krob, Alain Lascoux, Bernard Leclerc, Vladimir~S. Retakh, and Jean-Yves Thibon.
\newblock Noncommutative symmetric functions.
\newblock {\em Adv. Math.}, 112(2):218--348, 1995.

\bibitem{Gessel}
Ira~M Gessel.
\newblock Multipartite p-partitions and inner products of skew Schur functions.
\newblock {\em Contemporary Mathematics}, 34:289--301, 1984.

\bibitem{grinberg2014hopf}
Darij Grinberg and Victor Reiner.
\newblock Hopf algebras in combinatorics.
\newblock {\em arXiv preprint arXiv:1409.8356}, 2014.


\bibitem{haglund2011quasisymmetric}
James Haglund, Kurt Luoto, Sarah Mason, and Stephanie van Willigenburg.
\newblock Quasisymmetric Schur functions.
\newblock {\em Journal of Combinatorial Theory, Series A}, 118(2):463--490, 2011.

\bibitem{Huang}
Jia Huang.
\newblock A tableau approach to the representation theory of 0-Hecke algebras.
\newblock {\em Annals of Combinatorics}, 20(4):831--868, 2016.

\bibitem{krob1997noncommutative}
Daniel Krob, Bernard Leclerc, and J-Y Thibon.
\newblock Noncommutative symmetric functions {II}: {T}ransformations of alphabets.
\newblock {\em International Journal of Algebra and Computation}, 7(02):181--264, 1997.

\bibitem{NSYMIV}
Daniel Krob and Jean-Yves Thibon.
\newblock Noncommutative symmetric functions {IV}: {Q}uantum linear groups and {H}ecke algebras at $q= 0$.
\newblock {\em Journal of Algebraic Combinatorics}, 6(4):339--376, 1997.

\bibitem{krob1999noncommutative}
Daniel Krob and Jean-Yves Thibon.
\newblock Noncommutative symmetric functions {V}: A degenerate version of {Uq (glN)}.
\newblock {\em International Journal of Algebra and Computation}, 9:405--430, 1999.

\bibitem{lascoux2002plactic}
Alain Lascoux and Bernard Leclerc.
\newblock The plactic monoid.
\newblock In Monsieur Lothaire, editor, {\em Algebraic combinatorics on words}, volume~90. Cambridge University Press, 2002.

\bibitem{intro}
Kurt Luoto, Stefan Mykytiuk, and Stephanie van Willigenburg.
\newblock {\em An Introduction to Quasisymmetric Schur Functions}.
\newblock Springer Briefs in Mathematics. Springer, 2013.

\bibitem{Macdonald}
Ian~G. Macdonald.
\newblock {\em Symmetric Functions and Hall Polynomials, Second Edition}.
\newblock Oxford Mathematical Monographs. Oxford University Press, 1998.

\bibitem{MalvenutoReutenauer}
Clauda Malvenuto and Christophe Reutenauer.
\newblock Duality between quasi-symmetrical functions and the {S}olomon descent algebra.
\newblock {\em Journal of Algebra}, 177(3):967--982, 1995.

\bibitem{McCloskey}
Robert McCloskey.
\newblock {\em On {C}hange-of-{B}asis of {Q}uasisymmetric and {N}oncommutative {S}ymmetric {F}unctions}.
\newblock PhD thesis, Lehigh University, 2024.

\bibitem{meliot2017representation}
Pierre-Lo{\"i}c M{\'e}liot.
\newblock {\em Representation theory of symmetric groups}.
\newblock Chapman and Hall/CRC, 2017.

\bibitem{bs}
Mercedes Rosas and Bruce Sagan.
\newblock Symmetric functions in noncommuting variables.
\newblock {\em Transactions of the American Mathematical Society}, 358(1):215--232, 2006.

\bibitem{Sagan}
Bruce~E. Sagan.
\newblock {\em The Symmetric Group: Representations, Combinatorial Algorithms, and Symmetric Functions}.
\newblock Graduate Texts in Mathematics. Springer, 2010.

\bibitem{ECII}
Richard~P. Stanley.
\newblock {\em Enumerative Combinatorics, Volume 2}.
\newblock Cambridge Studies in Advanced Mathematics. Cambridge University Press, 2001.

\end{thebibliography}
\end{document}